\theoremstyle{definition}
\newtheorem{definition}{Definition}[section]
\newtheorem{lemma}[definition]{Lemma}
\newtheorem{proposition}[definition]{Proposition}
\newtheorem{theorem}[definition]{Theorem}
\newtheorem{example}[definition]{Example}
\newtheorem{corollary}[definition]{Corollary}
\newtheorem{remark}[definition]{Remark}
\newtheorem{problem}[definition]{Problem}
\newcommand{\N}{\mathbb{N}}
\newcommand{\Npos}{{\mathbb{N}_+}}
\newcommand{\R}{\mathbb{R}}
\newcommand{\Z}{\mathbb{Z}}
\newcommand{\C}{\mathbb{C}}
\newcommand{\Rpos}{\mathbb{R}_{>0}}
\newcommand{\abs}[1]{{\left\vert #1 \right\vert}}
\newcommand{\digs}{\Sigma} 
\newcommand{\Mul}{\Pi} 
\newcommand{\mul}{g} 
\newcommand{\fmul}{f} 
\newcommand{\trsh}{\Xi} 
\newcommand{\ind}[1]{{\mathcal{#1}}} 
\newcommand{\glue}{\otimes} 
\DeclareMathOperator{\fractional}{frac} 
\DeclareMathOperator{\config}{config} 
\DeclareMathOperator{\real}{real} 
\DeclareMathOperator{\cyl}{Cyl} 
\DeclareMathOperator{\tr}{Tr} 
\DeclareMathOperator{\pre}{pred} 
\DeclareMathOperator{\md}{Md} 
\begin{document}

\title{On the Trace Subshifts of Fractional Multiplication Automata}

\author{Johan Kopra}

\affil{Department of Mathematics and Statistics, \\FI-20014 University of Turku, Finland}
\affil{jtjkop@utu.fi}

\date{}

\maketitle

\setcounter{page}{1}

\begin{abstract}We address the dynamics of the cellular automaton (CA) that multiplies by $p/q$ in base $pq$ (for coprime $p>q>1$) by studying its trace subshift. We present a conjugacy of the trace to a previously studied base-$p/q$ numeration system. We also show that the trace subshift is not synchronizing and in particular not sofic. As a byproduct we compute its complexity function and we conclude by presenting an example of a sofic shift with the same complexity function.  \end{abstract}

\providecommand{\keywords}[1]{\textbf{Keywords:} #1}
\noindent\keywords{cellular automata, sofic subshifts, trace subshifts}

\section*{Introduction}

A cellular automaton (CA) is a model of parallel computation consisting of a uniform (in our case one-dimensional) grid of finite state machines, each of which receives input from a finite number of neighbors. All the machines use the same local update rule to update their states simultaneously at discrete time steps. An interesting natural class of automata is given by multiplication automata $\Mul_{p/q,pq}:\digs_{pq}^\Z\to \digs_{pq}^\Z$ (with coprime $p>q>1$) acting on bi-infinite sequences (configurations) over the digit set $\digs_{pq}=\{0,1,\dots,pq-1\}$, which perform multiplication by $p/q$ on base-$pq$ representations of nonnegative real numbers. Figure~\ref{frac} shows the elements $x,\Mul_{3/2,6}(x),\Mul_{3/2,6}^2(x),\dots$ on consecutive rows for $x\in\digs_{pq}^\Z$ representing the number $1$. Such a figure containing repeated applications of some CA $F$ on a configuration $x$ is called the \emph{space-time diagram} of $x$ (with respect to $F$). 

\begin{figure}[p]
\centering
\includegraphics[scale=0.5]{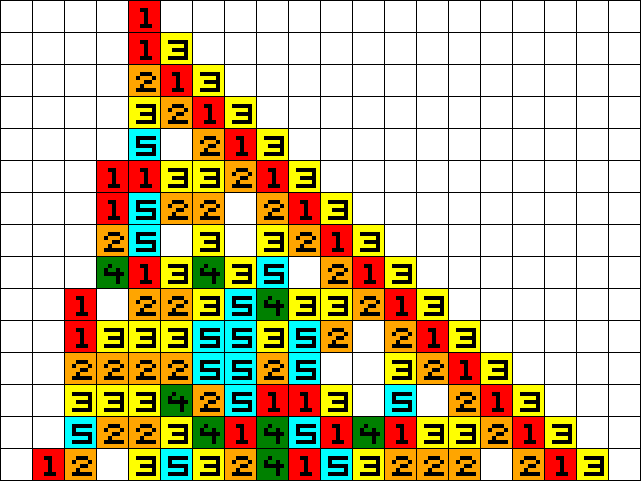}
\caption{Repeated multiplication of the number $1$ by the multiplication automaton $\Mul_{3/2,6}$. The $0$-digit is denoted by a white square.}
\label{frac}
\end{figure}

\begin{figure}[p]
\centering
\includegraphics[scale=0.48]{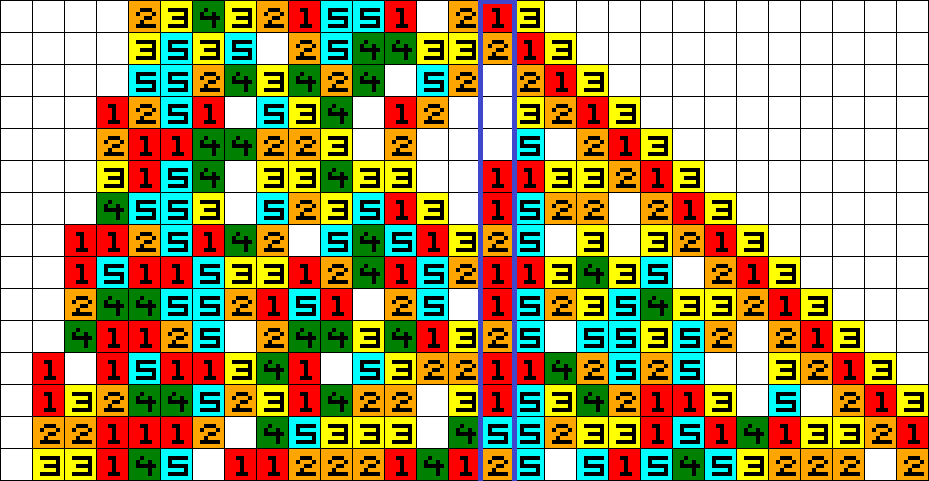}
\caption{A space-time diagram of a configuration $x$ (with respect to $\Mul_{3/2,6}$) that initially seems to be a base-$6$ representation of a $Z$-number. Note however an occurrence of the digit $5$ on the fourteenth row in the highlighted column.}
\label{fracMahler}
\end{figure}

A connection of these automata to Mahler's $3/2$-problem was noted in~\cite{Kari12a}. Mahler's $3/2$-problem~\cite{Mah68} is to determine whether there exists a $Z$-number, i.e. a number $\xi>0$ such that the fractional part of $\left(\frac{3}{2}\right)^n\xi$ is less than $\frac{1}{2}$ for all $n\in\N$. This means that for all $n\in\N$ the base-$6$ expansion of $\left(\frac{3}{2}\right)^n\xi$ contains $0,1$ or $2$ to the right of the decimal point. Therefore the existence of a $Z$-number is equivalent to the existence of a configuration $x\in\digs_6^\Z$ that contains only zeroes sufficiently far to the left and whose space-time diagram contains only digits $0,1$ and $2$ down some column (see an ``almost'' $Z$-number in Figure~\ref{fracMahler}). Any infinite sequence that can appear in the space-time diagram of some given CA is called a \emph{trace} and the collection of all possible traces is the \emph{trace subshift}.

In this paper we study the trace subshifts of multiplication automata $\Mul_{p/q,pq}$. One motivation for their study is the connection to Mahler's $3/2$-problem (and its natural generalization studied e.g. in~\cite{FLP95}, where we ask about the existence of a number  $\xi>0$ such that the fractional part of $\left(\frac{p}{q}\right)^n\xi$ is less than $\frac{1}{q}$ for all $n\in\N$) presented in the previous paragraph. Our main result concerns the computational tractability of the trace subshift. There is a trivial upper bound for the complexity: the language of the trace subshift of any CA has to be recursive. When it comes to the lower bound, it would be particularly nice if the trace shifts of $\Mul_{p/q,pq}$ were sofic, because then they could be represented as the collection of labels of infinite paths on some finite graph. We will show in Theorem~\ref{notsoficMahler} that the restriction of the trace shift of $\Mul_{p/q,pq}$ to the symbol set $\digs_p=\{0,\dots,p-1\}$ is not sofic and in Theorem~\ref{notsynch} that the trace shift in itself is not even synchronizing. Either of these results imply that $\Mul_{p/q,pq}$ is not a regular CA in K\r{u}rka's classification~\cite{Kur95}. 

In addition to proving our main result we make some other notes on the trace subshift. We present a conjugacy between the trace subshift and a previously studied base-$pq$ numeration system. We also compute the complexity function of the trace shift and the corresponding generating function. 

Large parts of this paper have appeared previously in the author's doctoral dissertation~\cite{KopDiss}. Section~\ref{secRealize} is completely new. Its purpose is to show that only studying the complexity function of the trace shift of $\Mul_{p/q,pq}$ is not sufficient to conclude that it is not sofic.

\section{Preliminaries}\label{sectPreli}

It is natural to consider cellular automata and subshifts in the context of general topological dynamics. Standard references for topological and symbolic dynamics are~\cite{Kur03} and~\cite{LM95}.

\begin{definition}If $X$ is a compact metrizable topological space and $T:X\to X$ is a continuous map, we say that $(X,T)$ is a \emph{(topological) dynamical system}.\end{definition}

\begin{definition}The dynamical system $(X,T)$ is \emph{transitive}, if for all nonempty open sets $U,V\subseteq X$ there exists $n\in\N$ such that $T^n(U)\cap V\neq\emptyset$.
\end{definition}

When there is no risk of confusion, we may identify the dynamical system $(X,T)$ with the underlying space or the underlying map, so we may say that $X$ is a dynamical system or that $T$ is a dynamical system.

The structure preserving transformations between topological dynamical systems are known as morphisms.

\begin{definition}We write $\psi:(X,T)\to (Y,S)$ whenever $(X,T)$ and $(Y,S)$ are dynamical systems and $\psi:X\to Y$ is a continuous map such that $\psi\circ T=S\circ\psi$. Then we say that $\psi$ is a \emph{morphism}. If $\psi$ is injective, we say that $\psi$ is an \emph{embedding}. If $\psi$ is surjective, we say that $\psi$ is a \emph{factor map} and that $(Y,S)$ is a factor of $(X,T)$ (via the map $\psi$). If $\psi$ is bijective, we say that $\psi$ is a \emph{conjugacy} and that $(X,T)$ and $(Y,S)$ are \emph{conjugate} (via $\psi$).\end{definition}

A finite set $A$ containing at least two elements (\emph{letters}) is called an \emph{alphabet}. Occasionally we want the alphabet to consist of numbers and thus for $n\in\Npos$ we denote $\digs_n=\{0,1,\dots,n-1\}$. The set $A^\Z$ of bi-infinite sequences (\emph{configurations}) over $A$ is called a \emph{full shift}. Formally any $x\in A^\Z$ is a function $\Z\to A$ and the value of $x$ at $i\in\Z$ is denoted by $x[i]$. It contains finite, right-infinite and left-infinite subsequences denoted by $x[i,j]=x[i]x[i+1]\cdots x[j]$, $x[i,\infty]=x[i]x[i+1]\cdots$ and $x[-\infty,i]=\cdots x[i-1]x[i]$. Occasionally we signify the symbol at position zero in a configuration $x$ by a dot as follows:
\[x=\cdots x[-2]x[-1]x[0].x[1]x[2]x[3]\cdots.\]

A configuration $x\in A^\Z$ is \emph{periodic} if there is a $p\in\Npos$ such that $x[i+p]=x[i]$ for all $i\in\Z$. Then we may also say that $x$ is $p$-periodic or that $x$ has period $p$. If $x$ is not periodic, it is \emph{aperiodic}. We say that $x$ is \emph{eventually periodic} if there are $p\in\Npos$ and $i_0\in\Z$ such that $x[i+p]=x[i]$ holds for all $i\geq i_0$.

A \emph{subword} of $x\in A^\Z$ is any finite sequence $x[i,j]$ where $i,j\in\Z$, and we interpret the sequence to be empty if $j<i$. Any finite sequence $w=w[1] w[2]\cdots w[n]$ (also the empty sequence, which is denoted by $\epsilon$) where $w[i]\in A$ is a \emph{word} over $A$. Unless we consider a word $w$ as a subword of some configuration, we start indexing the symbols of $w$ from $1$ as we have done here. The concatenation of a word or a left-infinite sequence $u$ with a word or a right-infinite sequence $v$ is denoted by $uv$. A word $u$ is a \emph{prefix} of a word or a right-infinite sequence $x$ if there is a word or a right-infinite sequence $v$ such that $x=uv$. Similarly, $u$ is a \emph{suffix} of a word or a left-infinite sequence $x$ if there is a word or a left-infinite sequence $v$ such that $x=vu$. The set of all words over $A$ is denoted by $A^*$, and the set of non-empty words is $A^+=A^*\setminus\{\epsilon\}$. The set of words of length $n$ is denoted by $A^n$. For a word $w\in A^*$, $\abs{w}$ denotes its length, i.e. $\abs{w}=n\iff w\in A^n$. For any word $w\in A^+$ we denote by ${}^\infty w$ and $w^\infty$ the left- and right-infinite sequences obtained by infinite repetitions of the word $w$. We denote by $w^\Z\in A^\Z$ the configuration defined by $w^\Z[in,(i+1)n-1]=w$ (where $n=\abs{w}$) for every $i\in\Z$. In the full shift $\digs_n^\Z$ we say that $x\in\digs_n^\Z$ is finite if $x[-\infty,i]={}^\infty 0$ and $x[j,\infty]=0^\infty$ for some $i,j\in\Z$.

Any collection of words $L\subseteq A^*$ is called a \emph{language}. For any $S\subseteq A^\Z$ the collection of words appearing as subwords of elements of $S$ is the language of $S$, denoted by $L(S)$. For $n\in\N$ we denote $L^n(S)=L(S)\cap A^n$. The \emph{complexity function} of $S$ is the map $P_S:\N\to\N$ defined by $P_S(n)=\abs{L^n(S)}$ for $n\in\N$. For any $L,K\subseteq A^*$, let
\[LK=\{uv\mid u\in L, v\in K\},\quad L^*=\{w_1\cdots w_n\mid n\geq 0,w_i\in L\}\subseteq A^*,\]
i.e. $L^*$ is the set of all finite concatenations of elements of $L$. If $\epsilon\notin L$, define $L^+=L^*\setminus\{\epsilon\}$ and if $\epsilon\in L$, define $L^+=L^*$.

For $x,y\in A^\Z$ and $i\in\Z$ we denote by $x\glue_i y\in A^\Z$ the ``gluing'' of $x$ and $y$ at $i$, i.e. $(x\glue_i y)[-\infty,i-1]=x[-\infty,i-1]$ and $(x\glue_i y)[i,\infty]=y[i,\infty]$. Typically we perform gluings at the origin and we denote $x\glue y=x\glue_0y$.

To consider topological dynamics on subsets of the full shift, the set $A^\Z$ is endowed with the product topology (with respect to the discrete topology on $A$). The shift map $\sigma:A^\Z\to A^\Z$ is defined by $\sigma(x)[i]=x[i+1]$ for $x\in A^\Z$, $i\in\Z$, and it is a homeomorphism. Any topologically closed \emph{nonempty} subset $X\subseteq A^\Z$ such that $\sigma(X)=X$ is called a \emph{subshift}. It is also a compact metrizable space under the subspace topology induced from $A^\Z$. The restriction of $\sigma$ to $X$ is also a homeomorphism and it may be denoted by $\sigma_X$. Typically the subscript $X$ is omitted from all notations when $X$ is clear from the context. Every subshift $X$ is identified with the dynamical system $(X,\sigma)$ induced by the shift map $\sigma$. For subshifts $X$ an alternative characterization of transitivity is that for all words $u,v\in L(X)$ there is a word $w\in L(X)$ such that $uwv\in L(X)$. 

\begin{definition}
A language $L\subseteq A^\Z$ is \emph{factorial} if for every $w\in L$ and every subword $v$ of $w$ it also holds that $w\in L$. It is \emph{extendable} if for every $w\in L$ there are $a,b\in A$ such that $awb\in L$.
\end{definition}
Whenever $L$ is a factorial extendable language, there is a subshift $X$ such that $L(X)=L$.

In this paper we consider two particular classes of subshifts.

\begin{definition}
A subshift $X$ is a \emph{sofic shift} if $L(X)$ is regular language.
\end{definition}

By an alternative characterization, a subshift is sofic if and only if it is a factor of a subshift of finite type. In particular it follows that any factor of a sofic subshift is also sofic.

\begin{definition}
Given a subshift $X$, we say that a word $w\in L(X)$ is synchronizing if
\[\forall u,v\in L(X): uw,wv\in L(X)\implies uwv\in L(X).\]
We say that a transitive subshift $X$ is synchronizing if $L(X)$ contains a synchronizing word.
\end{definition}
Transitive sofic shifts in particular are synchronizing, which follows by using the results of~\cite{LM95} in Section~3.3 and in Exercise~3.3.3.

Given a subshift $X\subseteq A^\Z$ and a word $w\in L(X)$ we define the set of predecessors of $w$ in $X$ by
\[\pre_X(w)=\{a\in A\mid aw\in L(X)\}.\]
The notion of predecessors is extended to one-way infinite sequences. For $x\in X$ we define
\begin{flalign*}
\pre_X(x[0,\infty])=\bigcap_{n\in\N}\pre_X(x[0,n]).
\end{flalign*}

\begin{definition}
Let $X\subseteq A^\Z$ and $Y\subseteq B^\Z$ be subshifts. We say that the map $F:X\to Y$ is a \emph{sliding block code} from $X$ to $Y$ (with memory $m$ and anticipation $a$ for integers $m\leq a$) if there exists a \emph{local rule} $f:A^{a-m+1}\to B$ such that $F(x)[i]=f(x[i+m],\dots,x[i],\dots,x[i+a])$. If $X=Y$, we say that $F$ is a \emph{cellular automaton} (CA). If we can choose $m$ and $a$ so that $-m=a=r\geq 0$, we say that $F$ is a radius-$r$ CA.
\end{definition}

Note that both memory and anticipation can be either positive or negative. Note also that if $F$ has memory $m$ and anticipation $a$ with the associated local rule $f:A^{a-m+1}\to A$, then $F$ is also a radius-$r$ CA for $r=\max\{\abs{m},\abs{a}\}$, with possibly a different local rule $f':A^{2r+1}\to A$. We can extend any local rule $f:A^{d+1}\to B$ (where $d=a-m$) to words $w=w[1]\cdots w[d+n]\in A^{d+n}$ with $n\in\Npos$ by $f(w)=u=u[1]\cdots u[n]$, where $u[i]= f(w[i],\dots,\allowbreak w[i+d])$.

Sliding block codes are morphisms between subshifts, and vice versa~\cite{Hed69}, and bijective sliding block codes are conjugacies. Bijective CA are called \emph{reversible}. It is known that the inverse map of a reversible CA is also a CA.

For a subshift $X\subseteq A^\Z$, a reversible CA $F:X\to X$, a configuration $x\in X$ and a nonempty interval $I=[i,j]\subseteq \Z$, the $I$-\emph{trace} of $x$ (with respect to $F$) is the configuration $\tr_{F,I}(x)$ over the alphabet $A^{\abs{I}}$ defined by
\[\tr_{F,I}(x)[t]=(F^t(x)[i],F^t(x)[i+1],\dots,F^t(x)[j]) \mbox{ for } t\in\Z.\]
If $I=\{i\}$ is the degenerate interval, we may write $\tr_{F,i}(x)$ and if $i=0$, we may write $\tr_F(x)$. If the CA $F$ is clear from the context, we may write $\tr_I(x)$. The $I$-\emph{trace subshift of} $F$ is defined by
\[\trsh_I(F)=\tr_{F,I}(X)\subseteq (A^\abs{I})^\Z,\]
This is indeed a subshift. Namely, $\trsh_I(F)$ is closed in $(A^\abs{I})^\Z$ as the image of the compact set $X$ under the continuous map $\tr_{F,I}$. It is also closed under $\sigma$, because any $z\in \trsh_I(F)$ has a preimage $x\in X$ and then the image of $F(x)$ by $\tr_{F,I}$ is $\sigma(z)$. This argument also shows that $\tr_{F,I}:(X,F)\to (\trsh_I(F),\sigma)$ is a factor map. We may omit the subscript if $I=\{0\}$, i.e. $\trsh(F)=\trsh_{\{0\}}(F)$.

The trace subshifts of $F$ form a universal collection of subshift factors of the dynamical system $(X,F)$ in the sense that any factor map $\psi:(X,F)\to (Z,\sigma)$ to a subshift $(Z,\sigma)$ factors through a trace subshift, i.e. there is an interval $I\subseteq \Z$ and a factor map $\psi':(\trsh_I(F),\sigma)\to(Z,\sigma)$ such that $\psi=\psi'\circ\tr_{F,I}$.

K\r{u}rka suggested a language theoretical classification for cellular automata. The following definition was given in~\cite{Kur97} for general dynamical systems on zero-dimensional spaces.

\begin{definition}
A cellular automaton $F:X\to X$ is \emph{regular} if all its subshift factors are sofic shifts.
\end{definition}

This definition is motivated in~\cite{Kur95}. Taking a subshift factor $Y$ of $F:X\to X$ corresponds to taking a finite (clopen) partition $\{X_1,\dots, X_n\}$ of $X$, an ``observation window'', and observing for each $x\in X$ the infinite sequence of partition elements visited by $x$ under repeated application of the map $F$. Regularity of $F$ means that the totality of all sequences of observations form a ``simple'' set $Y$ for arbitrarily precise observation windows. On the other hand, non-regularity means that $F$ has complex behavior that can be detected by a suitable partition of $X$. Since all subshift factors of sofic subshifts are sofic, and since the trace subshifts of $F$ form a universal collection of subshift factors for $(X,F)$, to test the regularity of $F$ it is sufficient to test the soficness of the trace subshifts.

\section{Multiplication automata}

In this section we introduce the fractional multiplication automata $\Mul_{p/q,pq}$ multiplying by $p/q$ in base $pq$. We begin by giving a natural definition of what it means for a cellular automaton to perform multiplication by nonnegative numbers. Such automata have been considered earlier in \cite{BHM96,BM97,Har12,Kari12a,Kari12b,KK17,Rud90}. Then we present the construction of $\Mul_{p/q,pq}$ and, after restricting to the case of coprime $p,q>1$ starting from Remark~\ref{coprimeAss}, prove some basic properties of multiplication automata. Some of the lemmas of this section have appeared previously in~\cite{KK17}.

Recall that $\digs_n=\{0,1,\dots,n-1\}$ for $n\in\N$, $n>1$. To perform multiplication using a CA we need be able to represent a nonnegative real number as a configuration in $\digs_n^{\Z}$. If $\xi\geq0$ is a real number and $\xi=\sum_{i=-\infty}^{\infty}{\xi_i n^i}$ is the unique base-$n$ expansion of $\xi$ such that $\xi_i\neq n-1$ for infinitely many $i<0$, we define $\config_n(\xi)\in \digs_n^{\Z}$ by
\[\config_n(\xi)[i]=\xi_{-i}\]
for all $i\in\Z$. In reverse, whenever $x\in \digs_n^{\Z}$ is such that $x[i]=0$ for all sufficiently small $i$, we define
\[\real_n(x)=\sum_{i=-\infty}^{\infty}{x[-i] n^i}.\]
Clearly $\real_n(\config_n(\xi))=\xi$ and $\config_n(\real_n(x))=x$ for every $\xi\geq0$ and every $x\in \digs_n^{\Z}$ such that $x[i]=0$ for all sufficiently small $i$ and $x[i]\neq n-1$ for infinitely many $i>0$.

The fractional part of a number $\xi\in\R$ is
\[\fractional(\xi)=\xi-\lfloor \xi\rfloor\in[0,1).\]

\begin{definition}\label{multdef}For $\alpha\in\Rpos$ and a natural number $n\geq2$, we denote by $\Mul_{\alpha,n}:\digs_n^\Z\to \digs_n^\Z$ the cellular automaton such that
\[\real(\Mul_{\alpha,n}(x))=\alpha\real(x)\]
for every finite configuration $x\in \digs_n^\Z$, whenever such an automaton exists. We say that $\Mul_{\alpha,n}$ multiplies by $\alpha$ in base $n$.
\end{definition}

The cellular automaton of this definition is unique whenever it exists. To see this, let $F$ and $F'$ be CA that satisfy the assumption for some $\alpha,n$. The function $\real:\digs_n^\Z\to \R$ is clearly injective on the set of finite configurations, so the values of $F$ and $F'$ are determined on the dense set of finite configurations. Since $F$ and $F'$ are continuous functions that agree on a dense set, it follows that $F=F'$. We note that in~\cite{BHM96} the possible pairs $\alpha,n$ have been characterized for one-sided configuration spaces $\digs_n^\N$. A characterization could also be given in the case $\digs_n^\Z$ that we consider along the same lines as in~\cite{BHM96} or by an alternative method of Section~3.2 in~\cite{KopDiss}.

For integers $p,n\geq2$ where $p$ divides $n$ let $\mul_{p,n}:\digs_{n}\times \digs_{n}\to \digs_{n}$ be defined as follows. Let $q$ be such that $pq=n$. Digits $a,b\in \digs_{pq}$ are represented as $a=a_1q+a_0$ and $b=b_1q+b_0$, where $a_0,b_0\in \digs_q$ and $a_1,b_1\in \digs_p$: such representations always exist and they are unique. Then
\[\mul_{p,n}(a,b)=\mul_{p,n}(a_1q+a_0,b_1q+b_0)=a_0p+b_1.\]
An example in the particular case $(p,n)=(3,6)$ is given in Figure \ref{taul}.

\begin{figure}[ht]
\centering
\begin{tabular} {c | c c c c c c}
$a\backslash b$ & 0 & 1 & 2 & 3 & 4 & 5 \\ \hline
0 & 0 & 0 & 1 & 1 & 2 & 2 \\
1 & 3 & 3 & 4 & 4 & 5 & 5 \\
2 & 0 & 0 & 1 & 1 & 2 & 2 \\
3 & 3 & 3 & 4 & 4 & 5 & 5 \\
4 & 0 & 0 & 1 & 1 & 2 & 2 \\
5 & 3 & 3 & 4 & 4 & 5 & 5 \\
\end{tabular}
\caption{The values of $\mul_{3,6}(a,b)$.}
\label{taul}
\end{figure}

We define the CA $\Mul_{p,n}:\digs_{n}^{\Z}\to \digs_{n}^{\Z}$ by $\Mul_{p,n}(x)[i]=\mul_{p,n}(x[i],x[i+1])$, so $\Mul_{p,n}$ has memory $0$ and anticipation $1$. Giving the name $\Mul_{p,n}$ to this CA is in agreement with Definition~\ref{multdef} by the following lemma.

\begin{lemma}\label{vastaavuus}$\real_{n}(\Mul_{p,n}(\config_{n}(\xi)))=p\xi$ for all $\xi\geq 0$.\end{lemma}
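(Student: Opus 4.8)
The plan is to prove the identity by a direct computation, reading the local rule $\mul_{p,n}$ as a carry-free implementation of multiplication by $p$ in base $n=pq$. First I would fix $\xi\ge 0$, write its base-$n$ expansion $\xi=\sum_{j=-\infty}^{\infty}\xi_j n^j$ (with $\xi_j\ne n-1$ for infinitely many $j<0$), and set $x=\config_n(\xi)$, so that $x[i]=\xi_{-i}$. The key preparatory step is to decompose each digit $\xi_j\in\digs_{pq}$ uniquely as $\xi_j=s_jq+r_j$ with $r_j\in\digs_q$ and $s_j\in\digs_p$, exactly the splitting used in the definition of $\mul_{p,n}$. Intuitively, $r_j$ is the part of $\xi_j$ that survives as a digit after multiplying by $p$, while $s_j$ becomes a carry.

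Next I would apply the local rule. Since $\Mul_{p,n}(x)[i]=\mul_{p,n}(x[i],x[i+1])=\mul_{p,n}(\xi_{-i},\xi_{-i-1})$, the definition of $\mul_{p,n}$ gives
\[\Mul_{p,n}(x)[i]=p\,r_{-i}+s_{-i-1}.\]
Here I would record two facts. First, each output symbol is a legitimate digit: $p\,r_{-i}\le p(q-1)=n-p$ and $s_{-i-1}\le p-1$, so $p\,r_{-i}+s_{-i-1}\le n-1$. This is the arithmetic content of the lemma, namely that multiplying by $p$ produces no secondary carries and can therefore be computed by a CA of anticipation $1$: writing $p\xi_j=p\,r_j+s_j n$ shows that $p\,r_j$ stays at position $j$ and the single carry $s_j$ is absorbed at position $j+1$. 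Second, since $\xi_j=0$ for all sufficiently large $j$, we have $r_{-i}=s_{-i-1}=0$ for all sufficiently small $i$, so $\Mul_{p,n}(x)$ lies in the domain of $\real_n$.

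Finally I would evaluate $\real_n(\Mul_{p,n}(x))=\sum_{i}\bigl(p\,r_i+s_{i-1}\bigr)n^i$, split it as $p\sum_i r_i n^i+\sum_i s_{i-1}n^i$, reindex the second sum to $n\sum_j s_j n^j$, and use $r_i=\xi_i-s_i q$ together with $pq=n$ to rewrite $p\sum_i r_i n^i=p\xi-n\sum_i s_i n^i$. The two carry contributions $\mp\, n\sum s_j n^j$ then cancel, leaving exactly $p\xi=p\,\real_n(x)$.

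The step that needs the most care is the manipulation of the doubly-infinite series: splitting one convergent series into two and reindexing is only legitimate once one checks that each regrouped series converges on its own. This is immediate here because all the digit sequences involved ($r_j$, $s_j$, $\xi_j$, and $p\,r_j+s_{j-1}$) are bounded and vanish for all sufficiently large $j$, so each series is an ordinary convergent base-$n$ value with only finitely many terms of positive order. One could instead prove the identity first for finite configurations, where all sums are finite, but the direct computation above already covers every $\xi\ge 0$ without appealing to any continuity argument for $\real_n$.
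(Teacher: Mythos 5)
Your proposal is correct and follows essentially the same route as the paper's proof: decompose each digit as $a_1q+a_0$ with $a_0\in\digs_q$, $a_1\in\digs_p$, apply the local rule $\mul_{p,n}$, split the resulting series, reindex the carry terms by one position, and recombine to get $p\,\real_{pq}(x)=p\xi$. The only differences are cosmetic: you make explicit the convergence and well-definedness checks that the paper leaves implicit, and you cancel the carry sums via the substitution $r_i=\xi_i-s_iq$ rather than folding them back into the digits directly, which is the same algebra.
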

\begin{proof}
Let $x=\config_{n}(\xi)$. Let $pq=n$ and for every $i\in\Z$, denote by $x[i]_0$ and $x[i]_1$ the natural numbers such that $0\leq x[i]_0<q$, $0\leq x[i]_1<p$ and $x[i]=x[i]_1q+x[i]_0$. Then
\begin{flalign*}
&\real_{n}(\Mul_{p,n}(\config_{n}(\xi)))=\real_{n}(\Mul_{p,n}(x))=\sum_{i=-\infty}^{\infty}\Mul_{p,n}(x)[-i](pq)^i \\
&=\sum_{i=-\infty}^{\infty}\mul_{p,n}(x[-i],x[-i+1])(pq)^i=\sum_{i=-\infty}^{\infty}(x[-i]_0 p+x[-i+1]_1)(pq)^i \\
&=\sum_{i=-\infty}^{\infty}(x[-i]_0 p(pq)^i+x[-i+1]_1pq(pq)^{i-1}) \\
&=\sum_{i=-\infty}^{\infty}(x[-i]_0 p(pq)^i+x[-i]_1pq(pq)^i) \\
&=p\sum_{i=-\infty}^{\infty}(x[-i]_1q+x[-i]_0)(pq)^i=p\real_{pq}(x)=p\real_{pq}(\config_{pq}(\xi))=p\xi.
\end{flalign*}
\end{proof}

We have now seen that the CA $\Mul_{p,n}$ and $\Mul_{q,n}$ exist when $p,q\in\N$ are such that $pq=n$. We show that in this case $\Mul_{p,n}$ is reversible. Indeed, if $x\in \digs_{n}^{\Z}$ is a configuration with a finite number of non-zero coordinates, then
\begin{flalign*}
&\Mul_{q,n}(\Mul_{p,n}(x))=\Mul_{q,n}(\Mul_{p,n}(\config_{pq}(\real_{pq}(x)))) \\
\overset{L \ref{vastaavuus}}{=}& \Mul_{q,n}(\config_{pq}(p\real_{pq}(x)))\overset{L \ref{vastaavuus}}{=}\config_{pq}((pq\real_{pq}(x))=\sigma(x).
\end{flalign*}
Since $\sigma^{-1}\circ \Mul_{q,n}\circ \Mul_{p,n}$ is continuous and agrees with the identity function on a dense set, it follows that $\sigma^{-1}(\Mul_{q,n}(\Mul_{p,n}(x)))=x$ for all configurations $x\in \digs_{pq}^{\Z}$. Similarly $\Mul_{p,n}(\sigma^{-1}(\Mul_{q,n}(x)))=x$ for $x\in \digs_{pq}^{\Z}$. Thus $\sigma^{-1}(\Mul_{q,n}(x))$ is the inverse of $\Mul_{p,n}$ and it must be equal to $\Mul_{1/p,n}$.

The shift CA $\sigma:\digs_{pq}^\Z\to\digs_{pq}^\Z$ multiplies by $pq$ in base $pq$ and its inverse divides by $pq$. This combined with Lemma \ref{vastaavuus} shows that the CA $\Mul_{p/q,pq}$ multiplying by $p/q$ in base $pq$ can be constructed as the composition $\sigma^{-1}\circ \Mul_{p,pq}\circ \Mul_{p,pq}$. Earlier we explicitly defined local rules $\mul_{p,pq}$ for the automata $\Mul_{p,pq}$ which we can use to define local rules $\fmul_{p/q,pq}:\digs_{pq}^3\to\digs_{pq}$ also for the automata $\Mul_{p/q,pq}$ as follows:
\begin{flalign*}
\Mul_{p/q,pq}(x)[i]=&\fmul_{p/q,pq}(x[i-1],x[i],x[i+1]) \\
\doteqdot&\mul_{p,pq}(\mul_{p,pq}(x[i-1],x[i]),\mul_{p,pq}(x[i],x[i+1]));
\end{flalign*}
the symbol $f$ in $\fmul_{p/q,pq}$ is used to emphasize the fact that this local rule is associated with multiplication by a \emph{fraction}.

\begin{remark}\label{coprimeAss}
In the rest of this paper we assume that $p,q>1$ are coprime integers unless specified otherwise.
\end{remark}

As an example, the local rule $\fmul_{3/2,6}$ has been written out explicitly in Figure \ref{lokaali32}. We will prove some of the regularities seen in this figure for general $\fmul_{p/q,pq}$.

\begin{figure}[ht]
\centering
\begin{tabular} {l l}
$c=0$ & $c=1$ \\
\begin{tabular} {c | c c c c c c}
$a\backslash b$ & 0 & 1 & 2 & 3 & 4 & 5 \\ \hline
0 & 0 & 0 & 0 & 0 & 1 & 1 \\
1 & 3 & 3 & 3 & 3 & 4 & 4 \\
2 & 0 & 0 & 0 & 0 & 1 & 1 \\
3 & 3 & 3 & 3 & 3 & 4 & 4 \\
4 & 0 & 0 & 0 & 0 & 1 & 1 \\
5 & 3 & 3 & 3 & 3 & 4 & 4 \\
\end{tabular}
&
\begin{tabular} {c | c c c c c c}
$a\backslash b$ & 0 & 1 & 2 & 3 & 4 & 5 \\ \hline
0 & 1 & 1 & 2 & 2 & 2 & 2 \\
1 & 4 & 4 & 5 & 5 & 5 & 5 \\
2 & 1 & 1 & 2 & 2 & 2 & 2 \\
3 & 4 & 4 & 5 & 5 & 5 & 5 \\
4 & 1 & 1 & 2 & 2 & 2 & 2 \\
5 & 4 & 4 & 5 & 5 & 5 & 5 \\
\end{tabular}
\\
\\
$c=2$ & $c=3$ \\
\begin{tabular} {c | c c c c c c}
$a\backslash b$ & 0 & 1 & 2 & 3 & 4 & 5 \\ \hline
0 & 3 & 3 & 3 & 3 & 4 & 4 \\
1 & 0 & 0 & 0 & 0 & 1 & 1 \\
2 & 3 & 3 & 3 & 3 & 4 & 4 \\
3 & 0 & 0 & 0 & 0 & 1 & 1 \\
4 & 3 & 3 & 3 & 3 & 4 & 4 \\
5 & 0 & 0 & 0 & 0 & 1 & 1 \\
\end{tabular}
&
\begin{tabular} {c | c c c c c c}
$a\backslash b$ & 0 & 1 & 2 & 3 & 4 & 5 \\ \hline
0 & 4 & 4 & 5 & 5 & 5 & 5 \\
1 & 1 & 1 & 2 & 2 & 2 & 2 \\
2 & 4 & 4 & 5 & 5 & 5 & 5 \\
3 & 1 & 1 & 2 & 2 & 2 & 2 \\
4 & 4 & 4 & 5 & 5 & 5 & 5 \\
5 & 1 & 1 & 2 & 2 & 2 & 2 \\
\end{tabular}
\\
\\
$c=4$ & $c=5$ \\
\begin{tabular} {c | c c c c c c}
$a\backslash b$ & 0 & 1 & 2 & 3 & 4 & 5 \\ \hline
0 & 0 & 0 & 0 & 0 & 1 & 1 \\
1 & 3 & 3 & 3 & 3 & 4 & 4 \\
2 & 0 & 0 & 0 & 0 & 1 & 1 \\
3 & 3 & 3 & 3 & 3 & 4 & 4 \\
4 & 0 & 0 & 0 & 0 & 1 & 1 \\
5 & 3 & 3 & 3 & 3 & 4 & 4 \\
\end{tabular}
&
\begin{tabular} {c | c c c c c c}
$a\backslash b$ & 0 & 1 & 2 & 3 & 4 & 5 \\ \hline
0 & 1 & 1 & 2 & 2 & 2 & 2 \\
1 & 4 & 4 & 5 & 5 & 5 & 5 \\
2 & 1 & 1 & 2 & 2 & 2 & 2 \\
3 & 4 & 4 & 5 & 5 & 5 & 5 \\
4 & 1 & 1 & 2 & 2 & 2 & 2 \\
5 & 4 & 4 & 5 & 5 & 5 & 5 \\
\end{tabular}
\end{tabular}
\caption{The values of $\fmul_{3/2,6}(a,c,b)$.}
\label{lokaali32}
\end{figure}

By the construction of $\Mul_{p/q,pq}$, for every $x\in \digs_{pq}^{\Z}$ and every $i\in\Z$ the value of $\Mul_{p/q,pq}(x)[i]$ can be computed from  $x[i-1],x[i]$ and $x[i+1]$, the three nearest digits above in the space-time diagram. Proposition \ref{leftdet}, originally proven in~\cite{KK17}, gives similarly that each digit in the space-time diagram can be computed from the three nearest digits to the right (see Figure \ref{expdirs}). We reproduce its proof here for the sake of completeness.

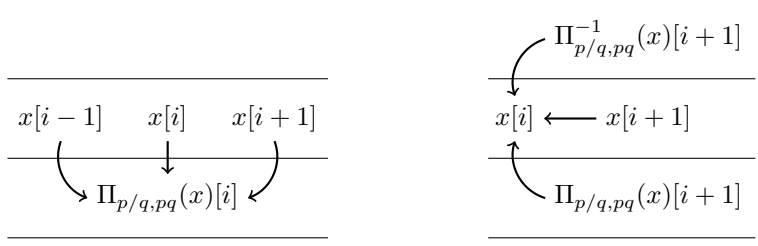
\begin{figure}
\centering
\begin{tikzpicture}
\draw (0pt,60pt) -- (120pt,60pt);
\draw (0pt,30pt) -- (120pt,30pt);
\draw (0pt,0pt) -- (120pt,0pt);
\node (u1) at (20pt,45pt) {$x[i-1]$}; \node (u2) at (60pt,45pt) {$x[i]$}; \node (u3) at (100pt,45pt) {$x[i+1]$};
                                 \node (d) at (60pt,15pt) {$\Mul_{p/q,pq}(x)[i]$};
\draw[->,thick](u1.south) to [bend right = 45] (d.west);\draw[->,thick](u2) to (d);\draw[->,thick](u3.south) to [bend left = 45] (d.east);
																
\draw (180pt,90pt) -- (280pt,90pt);
\draw (180pt,60pt) -- (280pt,60pt);
\draw (180pt,30pt) -- (280pt,30pt);
\draw (180pt,0pt) -- (280pt,0pt);
                                  \node (r1) at (240pt,75pt) {$\Mul_{p/q,pq}^{-1}(x)[i+1]$};
\node (l) at (190pt,45pt) {$x[i]$}; \node (r2) at (240pt,45pt) {$x[i+1]$};
                                  \node (r3) at (240pt,15pt) {$\Mul_{p/q,pq}(x)[i+1]$};
\draw[->,thick](r1.west) to [bend right = 45] (l.north);\draw[->,thick](r2) to (l);\draw[->,thick](r3.west) to [bend left = 45] (l.south);
\end{tikzpicture}
\caption{Determination of digits in the space-time diagram of $x$ with respect to $\Mul_{p/q,pq}$.}
\label{expdirs}
\end{figure}

\begin{lemma}\label{g1}If $\mul_{p,pq}(a,c)=\mul_{p,pq}(b,d)$, then $a\equiv b \pmod q$.\end{lemma}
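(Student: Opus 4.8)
The plan is to unfold the definition of $\mul_{p,pq}$, reduce the hypothesis to a single linear equation, and then extract the congruence by a size argument modulo $p$.

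First I would write each of the four arguments in its unique base decomposition: $a = a_1 q + a_0$, $c = c_1 q + c_0$, $b = b_1 q + b_0$ and $d = d_1 q + d_0$, where the low parts $a_0, b_0, c_0, d_0$ lie in $\digs_q$ and the high parts $a_1, b_1, c_1, d_1$ lie in $\digs_p$. Observe that $a \equiv b \pmod q$ is equivalent to $a_0 = b_0$, since $a_0, b_0 \in \{0, \dots, q-1\}$ are precisely the residues of $a, b$ modulo $q$. Thus the goal reduces to showing $a_0 = b_0$.

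By the definition of $\mul_{p,pq}$, the hypothesis $\mul_{p,pq}(a,c) = \mul_{p,pq}(b,d)$ reads $a_0 p + c_1 = b_0 p + d_1$, which rearranges to $(a_0 - b_0) p = d_1 - c_1$. The left-hand side is an integer multiple of $p$, while the right-hand side satisfies $\abs{d_1 - c_1} \le p - 1 < p$ because $c_1, d_1 \in \{0, \dots, p-1\}$. The only multiple of $p$ whose absolute value is strictly less than $p$ is $0$, so both sides must vanish: we get $d_1 = c_1$ and $a_0 = b_0$. Hence $a \equiv b \pmod q$, as claimed.

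There is no genuine obstacle here; the whole argument is a one-line computation once the digits are decomposed (and note it does not even require coprimality of $p$ and $q$). The only point demanding care is the range bound $\abs{d_1 - c_1} < p$, which is exactly what splits the equation $a_0 p + c_1 = b_0 p + d_1$ into separate equalities of the low and high parts. This is the discrete analogue of uniqueness of base-$p$ digits, and it is where the specific structure of $\mul_{p,pq}$ — reading the low $q$-part of its first input and the high $p$-part of its second input — enters.
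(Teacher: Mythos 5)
Your proof is correct and follows the same route as the paper: the identical digit decompositions, the reduction of the hypothesis to $a_0p+c_1=b_0p+d_1$, and the conclusion $a_0=b_0$. The paper leaves the final step implicit, whereas you justify it explicitly by the bound $\abs{d_1-c_1}<p$ (uniqueness of base-$p$ digits); this is exactly the intended reasoning, so the two proofs coincide in substance.
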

\begin{proof}Let $a=a_1q+a_0$, $b=b_1q+b_0$, $c=c_1q+c_0$ and $d=d_1q+d_0$. Then
\begin{flalign*}
\mul_{p,pq}(a,c)=\mul_{p,pq}(b,d)&\implies a_0p+c_1=b_0p+d_1 \\
&\implies a_0=b_0\implies a\equiv b\pmod q.
\end{flalign*}
\end{proof}

\begin{lemma}\label{g2}$\mul_{p,pq}(a,c)\equiv \mul_{p,pq}(b,c)\pmod q\iff a\equiv b \pmod q\iff \mul_{p,pq}(a,c)=\mul_{p,pq}(b,c)$.\end{lemma}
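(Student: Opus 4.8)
The plan is to unpack both sides using the explicit form of the local rule and then collapse all three conditions onto a single elementary statement about the residues of $a$ and $b$ modulo $q$. Writing $a=a_1q+a_0$, $b=b_1q+b_0$ and $c=c_1q+c_0$ with $a_0,b_0,c_0\in\digs_q$ and $a_1,b_1,c_1\in\digs_p$, the definition of $\mul_{p,pq}$ gives $\mul_{p,pq}(a,c)=a_0p+c_1$ and $\mul_{p,pq}(b,c)=b_0p+c_1$, so the two values differ exactly by $(a_0-b_0)p$. Everything then follows by tracking this difference.

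First I would establish the equivalence of the middle and rightmost conditions (a sharpening of Lemma \ref{g1} specialized to $c=d$). Since $a_0$ and $b_0$ are by definition the representatives of $a$ and $b$ modulo $q$ lying in $\digs_q=\{0,\dots,q-1\}$, the congruence $a\equiv b\pmod q$ holds if and only if $a_0=b_0$. On the other hand $\mul_{p,pq}(a,c)=\mul_{p,pq}(b,c)$ reads $a_0p+c_1=b_0p+c_1$, which simplifies directly to $a_0=b_0$. Hence both conditions are equivalent to $a_0=b_0$, and in particular to each other.

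Then I would treat the leftmost condition. Reducing the difference $(a_0-b_0)p$ modulo $q$, the relation $\mul_{p,pq}(a,c)\equiv\mul_{p,pq}(b,c)\pmod q$ is equivalent to $(a_0-b_0)p\equiv 0\pmod q$. This is the single place where the standing assumption that $p$ and $q$ are coprime (Remark \ref{coprimeAss}) is used: coprimality makes $p$ invertible modulo $q$, so one may cancel it to obtain $a_0\equiv b_0\pmod q$, that is $a_0=b_0$ as both lie in $\digs_q$. This again matches $a\equiv b\pmod q$, closing the chain of equivalences.

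The computation is routine; the only genuinely load-bearing step—and thus the one I would state with care—is the cancellation of $p$ modulo $q$, which fails without coprimality and is precisely what separates this two-sided lemma from the weaker one-directional Lemma \ref{g1}.
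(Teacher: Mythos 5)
Your proof is correct and takes essentially the same route as the paper: both decompose $a=a_1q+a_0$, $b=b_1q+b_0$, $c=c_1q+c_0$, compute $\mul_{p,pq}(a,c)=a_0p+c_1$ and $\mul_{p,pq}(b,c)=b_0p+c_1$, and collapse all three conditions onto $a_0=b_0$. The only difference is presentational: you spell out the cancellation of $p$ modulo $q$ via coprimality, a step the paper leaves implicit in its chain of equivalences.
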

\begin{proof}Let $a=a_1q+a_0$, $b=b_1q+b_0$ and $c=c_1q+c_0$. Then
\begin{flalign*}
\mul_{p,pq}(a,c)\equiv \mul_{p,pq}(b,c)\pmod q&\iff a_0p+c_1\equiv b_0p+c_1\pmod q \\
&\iff a_0=b_0\iff a\equiv b\pmod q 
\end{flalign*}
and
\begin{flalign*}
\mul_{p,pq}(a,c)=\mul_{p,pq}(b,c)&\iff a_0p+c_1=b_0p+c_1 \\
&\iff a_0=b_0\iff a\equiv b\pmod q. 
\end{flalign*}
\end{proof}

These basic properties of $\mul_{p,pq}$ can be used to prove the following lemma concerning $\fmul_{p/q,pq}$, because $\fmul_{p/q,pq}$ was defined using $\mul_{p,pq}$. Similar reductions of $\fmul_{p/q,pq}$ to $\mul_{p,pq}$ will be done also later.

\begin{lemma}\label{f1}If $\fmul_{p/q,pq}(a,c,d)=\fmul_{p/q,pq}(b,c,e)$, then $a\equiv b \pmod q$.\end{lemma}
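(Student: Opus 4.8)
The plan is to unfold the definition of $\fmul_{p/q,pq}$ into its two nested applications of $\mul_{p,pq}$ and then chain the two preceding lemmas. Recall that
\[
\fmul_{p/q,pq}(x,y,z)=\mul_{p,pq}(\mul_{p,pq}(x,y),\mul_{p,pq}(y,z)).
\]
Writing $A=\mul_{p,pq}(a,c)$, $A'=\mul_{p,pq}(c,d)$, $B=\mul_{p,pq}(b,c)$ and $B'=\mul_{p,pq}(c,e)$, the hypothesis $\fmul_{p/q,pq}(a,c,d)=\fmul_{p/q,pq}(b,c,e)$ becomes the single outer equality $\mul_{p,pq}(A,A')=\mul_{p,pq}(B,B')$.

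First I would apply Lemma~\ref{g1} to this outer equality. Since $\mul_{p,pq}(A,A')=\mul_{p,pq}(B,B')$, the lemma gives $A\equiv B\pmod q$; that is, $\mul_{p,pq}(a,c)\equiv \mul_{p,pq}(b,c)\pmod q$. Note that the auxiliary digits $d,e$ (which only affect $A'$ and $B'$) are irrelevant at this step, as Lemma~\ref{g1} constrains only the first arguments of the outer product.

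Then I would feed this congruence into the left equivalence of Lemma~\ref{g2}. The products $\mul_{p,pq}(a,c)$ and $\mul_{p,pq}(b,c)$ share the \emph{same} second argument $c$, so Lemma~\ref{g2} applies directly and states that $\mul_{p,pq}(a,c)\equiv \mul_{p,pq}(b,c)\pmod q$ is equivalent to $a\equiv b\pmod q$. This yields the conclusion. There is essentially no obstacle here beyond bookkeeping: the only point to verify is that the common middle symbol $c$ sits in the correct (second) argument slot of both inner products so that the fixed-second-argument form of Lemma~\ref{g2} is applicable, which it plainly is by construction of $\fmul_{p/q,pq}$.
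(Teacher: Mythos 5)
Your proposal is correct and follows exactly the paper's own argument: unfold $\fmul_{p/q,pq}$ into nested applications of $\mul_{p,pq}$, apply Lemma~\ref{g1} to the outer equality to get $\mul_{p,pq}(a,c)\equiv\mul_{p,pq}(b,c)\pmod q$, then apply Lemma~\ref{g2} (with the common second argument $c$) to conclude $a\equiv b\pmod q$. No differences worth noting.
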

\begin{proof}
\begin{flalign*}
&\fmul_{p/q,pq}(a,c,d)=\fmul_{p/q,pq}(b,c,e) \\
\implies &\mul_{p,pq}(\mul_{p,pq}(a,c),\mul_{p,pq}(c,d))=\mul_{p,pq}(\mul_{p,pq}(b,c),g_{p,pq}(c,e)) \\
\overset{L \ref{g1}}{\implies} &\mul_{p,pq}(a,c)\equiv \mul_{p,pq}(b,c)\pmod{q}
\overset{L \ref{g2}}{\implies} a\equiv b\pmod{q}.
\end{flalign*}
\end{proof}

\begin{proposition}\label{leftdet}There is a radius-$1$ CA $\Delta_{p/q}:\trsh(\Mul_{p/q,pq})\to\trsh(\Mul_{p/q,pq})$ such that $\Delta_{p/q}(\tr_{\Mul_{p/q,pq},i}(x))=\tr_{\Mul_{p/q,pq},i-1}(x)$ for all $x\in\digs_{pq}^\Z$, $i\in\Z$.\end{proposition}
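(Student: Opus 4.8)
The plan is to produce the radius-$1$ local rule that reconstructs the digit $x[i]$ from the three digits of the right neighbouring column indicated in Figure~\ref{expdirs}, namely $\Mul_{p/q,pq}^{-1}(x)[i+1]$, $x[i+1]$ and $\Mul_{p/q,pq}(x)[i+1]$. Writing $F=\Mul_{p/q,pq}$ and fixing $y=F^t(x)$, the three consecutive entries of the trace $\tr_{F,i+1}(x)$ at times $t-1,t,t+1$ are exactly $F^{-1}(y)[i+1]$, $y[i+1]$, $F(y)[i+1]$, while the entry $\tr_{F,i}(x)[t]$ we want to produce is $y[i]$. Hence it suffices to exhibit a map $\delta:\digs_{pq}^3\to\digs_{pq}$ with $\delta(F^{-1}(y)[i+1],\,y[i+1],\,F(y)[i+1])=y[i]$ for all $y\in\digs_{pq}^\Z$; this $\delta$ is the local rule of the desired $\Delta_{p/q}$, and the identity of the statement follows after reindexing $i+1\mapsto i$ using the shift-commutation $\tr_{F,i}(x)=\tr_F(\sigma^i x)$.

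I would recover $x[i]$ from its residues modulo $q$ and modulo $p$. For the residue modulo $q$ I use the forward direction: since $F(x)[i+1]=\fmul_{p/q,pq}(x[i],x[i+1],x[i+2])$, Lemma~\ref{f1} shows that any two configurations sharing the values of $x[i+1]$ and $F(x)[i+1]$ must agree on $x[i]\bmod q$, so $x[i]\bmod q$ is a well-defined function of the pair $(x[i+1],F(x)[i+1])$. For the residue modulo $p$ I use the backward direction. One computes $F^{-1}=\Mul_{q/p,pq}=\sigma^{-1}\circ\Mul_{q,pq}^2$, so that $F^{-1}$ admits the completely symmetric local rule $\fmul_{q/p,pq}(a,c,b)=\mul_{q,pq}(\mul_{q,pq}(a,c),\mul_{q,pq}(c,b))$, obtained from the construction of $\Mul_{p/q,pq}$ by interchanging the roles of $p$ and $q$. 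Consequently the $p\leftrightarrow q$ analogue of Lemma~\ref{f1} holds, giving $\fmul_{q/p,pq}(a,c,d)=\fmul_{q/p,pq}(b,c,e)\implies a\equiv b\pmod p$. Since $F^{-1}(x)[i+1]=\fmul_{q/p,pq}(x[i],x[i+1],x[i+2])$, this shows that $x[i]\bmod p$ is a well-defined function of $(x[i+1],F^{-1}(x)[i+1])$.

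Because $p$ and $q$ are coprime (Remark~\ref{coprimeAss}) and $x[i]\in\{0,\dots,pq-1\}$, the Chinese remainder theorem combines these two residues into the single value $x[i]$; this defines $\delta$ on all triples that actually occur in some column, and elsewhere $\delta$ may be set arbitrarily. The resulting radius-$1$ sliding block code $\Delta_{p/q}$ satisfies $\Delta_{p/q}(\tr_{F,i+1}(x))=\tr_{F,i}(x)$, and since every element of $\trsh(\Mul_{p/q,pq})$ has the form $\tr_F(x)=\tr_{F,0}(x)$ and is mapped to $\tr_{F,-1}(x)=\tr_F(\sigma^{-1}x)\in\trsh(\Mul_{p/q,pq})$, the map $\Delta_{p/q}$ is indeed a CA on $\trsh(\Mul_{p/q,pq})$ with the asserted property.

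The main obstacle is the backward step: I must verify that $F^{-1}$ is again a fractional multiplication automaton of the same shape, so that Lemma~\ref{f1} transports verbatim under the exchange $p\leftrightarrow q$ and yields the residue modulo $p$. The forward step is immediate from Lemma~\ref{f1}, and once both residues are in hand the coprimality of $p$ and $q$ makes the Chinese remainder reconstruction routine.
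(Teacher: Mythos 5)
Your proposal is correct and follows essentially the same route as the paper's proof: recover $x[i]\bmod q$ from $(x[i+1],\Mul_{p/q,pq}(x)[i+1])$ via Lemma~\ref{f1}, recover $x[i]\bmod p$ from $(x[i+1],\Mul_{q/p,pq}(x)[i+1])$ via the $p\leftrightarrow q$ symmetric instance of the same lemma applied to the inverse automaton $\Mul_{p/q,pq}^{-1}=\Mul_{q/p,pq}$, and combine the two residues by coprimality. The only difference is that you spell out the verification that $F^{-1}=\sigma^{-1}\circ\Mul_{q,pq}^2$ and the reindexing step, which the paper leaves implicit.
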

\begin{proof}It suffices to restrict to the case $i=1$ and to show for an arbitrary $x\in\digs_{pq}^\Z$ that the value of $\tr_{\Mul_{p/q,pq},0}(x)[0]$ can be computed from $\tr_{\Mul_{p/q,pq},1}(x)[-1]$, $\tr_{\Mul_{p/q,pq},1}(x)[0]$ and $\tr_{\Mul_{p/q,pq},1}(x)[1]$ (by some function $\delta:\digs_{pq}^3\to\digs_{pq}$, which we will not explicitly derive). By the definition of the trace map this is equivalent to showing that $x[0]$ can be computed from $\Mul_{q/p,pq}(x)[1]$, $x[1]$ and $\Mul_{p/q,pq}(x)[1]$.

Because $\Mul_{p/q,pq}(x)[1]=\fmul_{p/q,pq}(x[0],x[1],x[2])$, by Lemma \ref{f1} the value of $x[0]$ modulo $q$ can be computed from $x[1]$ and $\Mul_{p/q,pq}(x)[1]$ (see Figure~\ref{ldetproof}, left). Similarly, because $\Mul_{q/p,pq}(x)[1]\allowbreak=\fmul_{q/p,pq}(x[0],x[1],x[2])$, by the same lemma the value of $x[0]$ modulo $p$ can be computed from $x[1]$ and $\Mul_{q/p,pq}(x)[1]$ (Figure~\ref{ldetproof}, middle). In total, the value of $x[0]$ both modulo $q$ and modulo $p$ can be computed from $\Mul_{q/p,pq}(x)[1]$, $x[1]$ and $\Mul_{p/q,pq}(x)[1]$ (Figure~\ref{ldetproof}, right). Because $x[0]\in \digs_{pq}$, this fully determines the value of $x[0]$.\end{proof}

\begin{figure}[h]
\centering
\begin{tikzpicture}
\def\tetris{ rectangle ++(30pt,30pt) ++(-60pt,0pt) rectangle ++(30pt,30pt) ++(0pt,-30pt) rectangle ++(30pt,30pt) ++(-30pt,0pt) rectangle ++(30pt,30pt)}

\node[anchor=east] at (-10pt,75pt) {$\Mul_{q/p,pq}(x)$:};
\node[anchor=east] at (-10pt,45pt) {$x$:};
\node[anchor=east] at (-10pt,15pt) {$\Mul_{p/q,pq}(x)$:};
\draw (30pt,0pt) \tetris;
\node[text width=20pt] at (15pt,45pt) {$1,3$ or $5$};
\node[minimum size=20pt] (l) at (15pt,45pt) {};
\node at (45pt,75pt) {?};
\node at (45pt,45pt) (l1) {$4$};
\node at (45pt,15pt) (l2) {$3$};
\draw[->,thick](l1.west) to (l.east); \draw[->,thick] (l2.west) to [bend left = 45]  (l.south);

\node at (75pt,45pt) {$\land$};

\draw (120pt,0pt) \tetris;
\node (m) at (105pt,45pt) {$0$ or $3$};
\node (m1) at (135pt,75pt) {$2$};
\node (m2) at (135pt,45pt) {$4$};
\node at (135pt,15pt) {?};
\draw[->,thick](m1.west) to [bend right = 45] (m.north); \draw[->,thick] (m2.west) to (m.east);

\node at (170pt,45pt) {$\implies$};

\draw (220pt,0pt) \tetris;
\node (r) at (205pt,45pt) {$3$};
\node (r1) at (235pt,75pt) {$2$};
\node (r2) at (235pt,45pt) {$4$};
\node (r3) at (235pt,15pt) {$3$};
\draw[->,thick](r1.west) to [bend right = 45] (r.north); \draw[->,thick] (r2.west) to (r.east); \draw[->,thick] (r3.west) to [bend left = 45]  (r.south);
\end{tikzpicture}
\caption{The proof of Proposition \ref{leftdet} (here $(p,n)=(3,6)$).}
\label{ldetproof}
\end{figure}
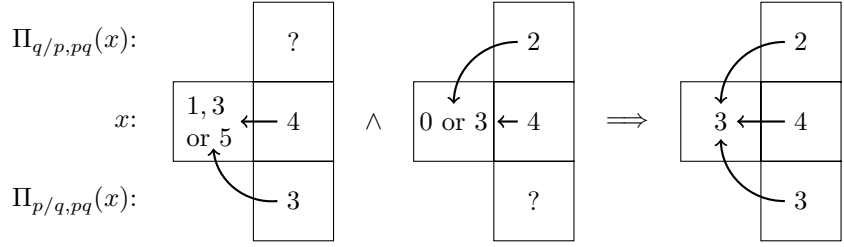

Proposition~\ref{leftdet} is not needed to prove our main results in Section~\ref{trSect}, but instead we use it to prove two simple results of independent interest. As the first application of this proposition we prove that $\Mul_{p/q,pq}$ cannot generate an eventually periodic trace from a configuration that represents a positive real number. The same idea has been used for other cellular automata in~\cite{Jen90}.

\begin{proposition}\label{mulAper}
Let $p>q$. If $x\in\digs_{pq}^\Z$ is a configuration that represents a positive real number (in particular, if $x$ is a finite configuration different from $0^\Z$), then $\tr_{\Mul_{p/q,pq}}(x)$ is not eventually periodic.
\end{proposition}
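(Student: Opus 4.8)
The plan is to argue by contradiction. Suppose the central column $z=\tr_{\Mul_{p/q,pq}}(x)$ is eventually periodic, say $z[t+P]=z[t]$ for all $t\ge T$. Writing $F=\Mul_{p/q,pq}$, $\xi=\real(x)>0$ and $y^{(t)}=F^t(x)$, I will show that this forces $\lfloor(p/q)^{t+P}\xi\rfloor=\lfloor(p/q)^t\xi\rfloor$ for all large $t$, which is impossible because $p>q$. Throughout I use that $F$ multiplies by $p/q$ in base $pq$ and that the all-zero region far to the left is preserved (since $\fmul_{p/q,pq}(0,0,0)=0$), so $\real(y^{(t)})=(p/q)^t\xi$ is well defined and its base-$pq$ digits at the positions $i\le 0$ are exactly the digits of the integer part $\lfloor(p/q)^t\xi\rfloor$.

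The first step turns eventual periodicity of one column into eventual periodicity of all columns to the left. By Proposition~\ref{leftdet} the column $t\mapsto y^{(t)}[-k]$ equals $\Delta_{p/q}^k(z)$, and since $\Delta_{p/q}$ is a radius-$1$ sliding block code, each application preserves eventual periodicity and raises the threshold by at most one. Hence for every $k\ge 0$ the column at position $-k$ has period $P$ from time $T+k$; equivalently $y^{(t+P)}[i]=y^{(t)}[i]$ whenever $i\le 0$ and $t\ge T+\abs{i}$.

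The second step is a light-cone comparison. The integer part $\lfloor(p/q)^t\xi\rfloor$ occupies only the positions $0,-1,\dots,-(D(t)-1)$, where $D(t)=\lfloor\log_{pq}((p/q)^t\xi)\rfloor+1$ is its number of digits, so $y^{(t)}[i]=0$ for $i<-(D(t)-1)$. Since $p/q<pq$ we have $\log_{pq}(p/q)<1$, so $D(t)$ grows strictly slower than $t$; consequently, for all large $t$ both $D(t)$ and $D(t+P)$ are at most $t-T+1$. This means that every nonzero digit at a position $i\le 0$ of $y^{(t)}$ and of $y^{(t+P)}$ lies in the periodic region $\{i:i\ge-(t-T)\}$, on which the two configurations agree by the first step. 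Therefore $y^{(t+P)}[i]=y^{(t)}[i]$ for every $i\le 0$, i.e. $\lfloor(p/q)^{t+P}\xi\rfloor=\lfloor(p/q)^t\xi\rfloor$ for all large $t$. This contradicts the fact that $(p/q)^{t+P}\xi-(p/q)^t\xi=(p/q)^t\xi((p/q)^P-1)\to\infty$, which exceeds $1$ for large $t$ and forces the integer parts to differ.

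The step I expect to be the main obstacle is the light-cone comparison: one must match the unit rate at which the periodicity threshold grows per column (coming from $\Delta_{p/q}$ having radius $1$) against the strictly smaller rate $\log_{pq}(p/q)<1$ at which the span of nonzero integer digits grows, and check that the periodic region overtakes the entire integer part simultaneously for the two sampled times $t$ and $t+P$. The remaining ingredients---preservation of the left-zero region and the identity $\real(y^{(t)})=(p/q)^t\xi$---are routine.
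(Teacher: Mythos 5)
Your proof is correct, and it shares with the paper its key ingredient---Proposition~\ref{leftdet}, used to propagate the assumed periodicity to the columns on the left---but the finishing argument is genuinely different. The paper renormalizes: it considers $x_t=(\sigma^{-1}\circ \Mul_{p/q,pq})^t(x)$, whose represented numbers shrink like $\real_{pq}(x)/q^{2t}$, so that each of the associated traces keeps period $P$ with threshold $0$ (the extra shift exactly cancels the radius-$1$ spread of $\Delta_{p/q}$); once the represented number is small enough, the corresponding trace is forced to be $0^\Z$, hence all columns to the left of some position $-T$ vanish identically for all time, and the contradiction is that the orbit $(p/q)^t\real_{pq}(x)$ stays bounded by $(pq)^T$. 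You instead keep the original configuration, let the periodicity threshold grow at rate $1$ per column, and play it off against the digit span of the integer part, which grows at rate $\log_{pq}(p/q)<1$; your contradiction is that $\lfloor (p/q)^{t+P}\xi\rfloor=\lfloor (p/q)^t\xi\rfloor$ for all large $t$ while the true gap tends to infinity. Both arguments ultimately rest on the same inequality $p/q<pq$; the paper's renormalization buys cleaner bookkeeping (no thresholds or digit counts to track), while your version is more self-contained and makes the light-cone picture explicit. One small imprecision to patch: the digits of $\Mul_{p/q,pq}^t(x)$ at positions $i\le 0$ are not always ``exactly the digits of $\lfloor(p/q)^t\xi\rfloor$''---if the configuration has the tail $(pq-1)^\infty$ to the right (such configurations do occur in this dynamics, cf.\ Corollary~\ref{tail}), those digits encode $\lfloor(p/q)^t\xi\rfloor-1$ instead. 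This is harmless for you: equality of the digits at positions $i\le 0$ then still forces the two floors to agree to within $1$, and your final contradiction (the gap $(p/q)^t\xi((p/q)^P-1)$ exceeds any constant) goes through unchanged.
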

\begin{proof}
Let $x\in\digs_{pq}^\Z$ be such that $\real_{pq}(x)>0$. Assume to the contrary that $y=\tr_{\Mul_{p/q,pq}}(x)$ is eventually periodic, i.e. there are $P\in\Npos$, $i_0\in\N$ such that $y[i+P]=y[i]$ for all $i\geq i_0$, and we may assume that this holds even for all $i\in\N$ (by considering the configuration $\Mul_{p/q,pq}^{i_0}(x)$ instead of $x$ if necessary). Denote $x_t=(\sigma^{-1}\circ \Mul_{p/q,pq})^t(x)$ and $y_t=\tr_{\Mul_{p/q,pq}}(x_t)$ for all $t\in\N$. An inductive application of Proposition \ref{leftdet} with respect to $t$ shows that $y_t[i+P]=y_t[i]$ for all $i,t\in\N$.

Note that $\real_{pq}(x_t)=\left(\frac{1}{pq}\frac{p}{q}\right)^t\real_{pq}(x)=\real_{pq}(x)/q^{2t}$ for all $t\in\N$. Fix $T$ so that $\left(\frac{p}{q}\right)^P\real_{pq}(x_T)<1$. From this it follows that $y_T[-\infty,P]={}^\infty 0$ and by the eventual periodicity of $y_T$ it follows that $y_T=0^\Z$. Applying Proposition \ref{leftdet} shows that $y_t=0^\Z$ for all $t\geq T$. In particular $\Mul_{p/q,pq}^t(x)[-\infty,-T]={}^\infty 0$ for $t\in\N$ and the sequence $\left(\left(\frac{p}{q}\right)^t\real_{pq}(x)\right)_{t\in\N}$ is bounded from above by $(pq)^T$, which contradicts the assumption that $\real_{pq}(x)>0$.
\end{proof}

A second implication of Proposition~\ref{leftdet} is that to understand the dynamics of all trace subshifts of $\Mul_{p/q,pq}$ it is sufficient to study the trace subshifts of width $1$. This further justifies our focus on the trace shift $\trsh(\Mul_{p/q,pq})$. 

\begin{proposition}\label{univ1trace}
Let $i<j$ be integers and let $I=[i,j]\subseteq\Z$ be an interval of integers. Then the subshifts $\trsh_I(\Mul_{p/q,pq})$, $\trsh_j(\Mul_{p/q,pq})$ and $\trsh(\Mul_{p/q,pq})$ are conjugate.
\end{proposition}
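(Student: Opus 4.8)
The plan is to reduce everything to the single-position trace and to reconstruct the wider trace out of the map $\Delta_{p/q}$ furnished by Proposition~\ref{leftdet}. First I would observe that $\trsh_j(\Mul_{p/q,pq})$ and $\trsh(\Mul_{p/q,pq})$ are in fact the \emph{same} subshift, not merely conjugate ones. Since $\Mul_{p/q,pq}$ commutes with $\sigma$, for every $x\in\digs_{pq}^\Z$ and $t\in\Z$ we have $\Mul_{p/q,pq}^t(x)[j]=\Mul_{p/q,pq}^t(\sigma^j x)[0]$, so $\tr_{\Mul_{p/q,pq},j}(x)=\tr_{\Mul_{p/q,pq}}(\sigma^j x)$. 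As $\sigma^j$ is a bijection of the full shift $\digs_{pq}^\Z$ onto itself, taking images gives $\trsh_j(\Mul_{p/q,pq})=\trsh(\Mul_{p/q,pq})$, and the identity map is a conjugacy between them.

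It then remains to exhibit a conjugacy between $\trsh_I(\Mul_{p/q,pq})$ and $\trsh_j(\Mul_{p/q,pq})$. I would define the letter-to-letter projection $\phi:\trsh_I(\Mul_{p/q,pq})\to\trsh_j(\Mul_{p/q,pq})$ sending each symbol $(a_i,\dots,a_j)\in\digs_{pq}^{\abs{I}}$ to its last coordinate $a_j$; by the definition of the trace, $\phi(\tr_{\Mul_{p/q,pq},I}(x))=\tr_{\Mul_{p/q,pq},j}(x)$, so $\phi$ is a radius-$0$ sliding block code landing in $\trsh_j(\Mul_{p/q,pq})$. For the reverse direction I would iterate the identity $\Delta_{p/q}(\tr_{\Mul_{p/q,pq},k}(x))=\tr_{\Mul_{p/q,pq},k-1}(x)$ of Proposition~\ref{leftdet} to obtain $\Delta_{p/q}^{\,j-k}(\tr_{\Mul_{p/q,pq},j}(x))=\tr_{\Mul_{p/q,pq},k}(x)$ for every $k\le j$. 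Stacking these columns defines
\[\psi(z)[t]=\bigl(\Delta_{p/q}^{\,j-i}(z)[t],\Delta_{p/q}^{\,j-i-1}(z)[t],\dots,\Delta_{p/q}(z)[t],z[t]\bigr),\]
which is a sliding block code of radius $j-i$ (a finite stacking of powers of the radius-$1$ CA $\Delta_{p/q}$) satisfying $\psi(\tr_{\Mul_{p/q,pq},j}(x))=\tr_{\Mul_{p/q,pq},I}(x)$; since these traces exhaust $\trsh_j(\Mul_{p/q,pq})$, the image of $\psi$ lies in $\trsh_I(\Mul_{p/q,pq})$.

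Finally I would check that $\phi$ and $\psi$ are mutually inverse: the computations above give $\psi(\phi(\tr_{\Mul_{p/q,pq},I}(x)))=\tr_{\Mul_{p/q,pq},I}(x)$ and $\phi(\psi(\tr_{\Mul_{p/q,pq},j}(x)))=\tr_{\Mul_{p/q,pq},j}(x)$, and since these traces exhaust the respective subshifts, both composites are the identity. Hence $\phi$ is a bijective sliding block code, therefore a conjugacy, and combined with the first paragraph all three subshifts are conjugate. The only point needing care is the bookkeeping in the middle step, namely that stacking the iterates of $\Delta_{p/q}$ genuinely reconstructs the full width-$\abs{I}$ trace with the coordinates in the correct order and lands inside $\trsh_I(\Mul_{p/q,pq})$; but this is immediate from Proposition~\ref{leftdet} once the indexing is fixed, so I do not anticipate a genuine obstacle.
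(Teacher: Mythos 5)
Your proof is correct and follows essentially the same route as the paper: the paper likewise notes that $\trsh_j(\Mul_{p/q,pq})$ and $\trsh(\Mul_{p/q,pq})$ coincide as sets, and builds the conjugacy $\trsh_j(\Mul_{p/q,pq})\to\trsh_I(\Mul_{p/q,pq})$ by stacking the iterates $\Delta_{p/q}^{j-i}(x),\dots,\Delta_{p/q}(x),x$ exactly as your map $\psi$ does, with injectivity/surjectivity following from Proposition~\ref{leftdet}. Your only addition is to write out the inverse projection $\phi$ and verify the two composites explicitly, which the paper leaves as ``easily seen.''
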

\begin{proof}
We make the natural identification of $\trsh_I(\Mul_{p/q,pq})$ as a subset of the cartesian product $\trsh_i(\Mul_{p/q,pq})\times\cdots\times\trsh_j(\Mul_{p/q,pq})$. We define a map $F:\trsh_j(\Mul_{p/q,pq})\to \trsh_I(\Mul_{p/q,pq})$ by
\[F(x)=(\Delta_{p/q}^{j-i}(x),\Delta_{p/q}^{j-(i+1)}(x)\dots,\Delta_{p/q}(x),x)\]
for $x\in \trsh_j(\Mul_{p/q,pq})$. This is easily seen to be an injective sliding block code, and by Proposition~\ref{leftdet} it is also surjective, so $F$ is a conjugacy.

The subshifts $\trsh_j(\Mul_{p/q,pq})$ and $\trsh(\Mul_{p/q,pq})$ are identical as sets, so they are also conjugate.
\end{proof}

An important class of CA on full shifts are the \emph{permutive} cellular automata. We say that a CA $F:A^\Z\to A^\Z$ defined by a local rule $f:A^{d+1}\to A^{d+1}$ is left permutive if for every $w\in A^d$ it holds that $f(a,w)\neq f(b,w)$ whenever $a,b\in A$ are distinct (similarly one defines right permutive CA). This is equivalent to saying that the map $A\to A$ defined by $a\to f(a,w)$ is a permutation for every $w\in A^d$. The following lemma shows that $\fmul_{p/q,pq}$ has a kind of a \emph{partial} permutivity property: as the symbol $a$ varies modulo $q$, also $\fmul_{p/q,pq}(a,w)$ varies modulo $q$.

\begin{lemma}\label{f2}$\fmul_{p/q,pq}(a,c,d)\equiv \fmul_{p/q,pq}(b,c,d)\pmod q \\ \iff a\equiv b \pmod q\iff \fmul_{p/q,pq}(a,c,d)=\fmul_{p/q,pq}(b,c,d)$.\end{lemma}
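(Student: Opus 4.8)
The plan is to reduce everything to two applications of Lemma~\ref{g2}, exactly as Lemma~\ref{f1} was reduced to Lemmas~\ref{g1} and~\ref{g2}, but now tracking the \emph{equality} halves of Lemma~\ref{g2} alongside the congruence halves. The essential structural remark is that, by definition,
\[\fmul_{p/q,pq}(a,c,d)=\mul_{p,pq}\!\left(\mul_{p,pq}(a,c),\,w\right),\qquad w\doteqdot\mul_{p,pq}(c,d),\]
and likewise $\fmul_{p/q,pq}(b,c,d)=\mul_{p,pq}(\mul_{p,pq}(b,c),w)$ with the \emph{same} second argument $w$, since neither $c$ nor $d$ is being varied. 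Hence the two outputs are obtained by feeding $\mul_{p,pq}(a,c)$ and $\mul_{p,pq}(b,c)$ into the outer $\mul_{p,pq}$ in the first coordinate against the common second coordinate $w$, which is precisely the situation governed by Lemma~\ref{g2}.

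First I would apply Lemma~\ref{g2} to the outer multiplication, substituting $\mul_{p,pq}(a,c)$ and $\mul_{p,pq}(b,c)$ for its first arguments and $w$ for its common second argument. This expresses both the congruence $\fmul_{p/q,pq}(a,c,d)\equiv\fmul_{p/q,pq}(b,c,d)\pmod q$ and the equality $\fmul_{p/q,pq}(a,c,d)=\fmul_{p/q,pq}(b,c,d)$ as equivalent to the single intermediate condition $\mul_{p,pq}(a,c)\equiv\mul_{p,pq}(b,c)\pmod q$; here it is crucial that the congruence and equality forms in the statement of Lemma~\ref{g2} share the same middle term $\mul_{p,pq}(a,c)\equiv\mul_{p,pq}(b,c)\pmod q$, so that both outer conditions get pinned to one intermediate condition.

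Next I would apply Lemma~\ref{g2} a second time, now to the inner multiplication with common second argument $c$, to rewrite that intermediate condition as $a\equiv b\pmod q$. Splicing this equivalence into the middle of the chain from the previous paragraph produces exactly the three-way equivalence claimed in the statement. I do not expect a genuine obstacle: the only point needing care is bookkeeping, namely that in both the inner and the outer $\mul_{p,pq}$ the varied symbol always occupies the first coordinate, which is the coordinate controlled by Lemma~\ref{g2}. It is worth noting why the present statement is stronger than Lemma~\ref{f1}: because the third argument is held fixed here (both outputs use the same $d$, hence the same $w$), Lemma~\ref{g2} can be invoked in both its congruence and its equality form, upgrading the one-directional implication of Lemma~\ref{f1} to the full biconditional that also detects exact equality.
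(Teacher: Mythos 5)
Your proposal is correct and takes essentially the same route as the paper's proof: both unfold $\fmul_{p/q,pq}(\cdot,c,d)$ as $\mul_{p,pq}(\mul_{p,pq}(\cdot,c),\mul_{p,pq}(c,d))$ and apply Lemma~\ref{g2} twice, once to the outer multiplication (exploiting that its second argument $\mul_{p,pq}(c,d)$ is the same on both sides, so both the congruence and the equality forms reduce to $\mul_{p,pq}(a,c)\equiv\mul_{p,pq}(b,c)\pmod q$) and once to the inner multiplication to reach $a\equiv b\pmod q$. The paper merely writes this as a single chain of equivalences rather than splicing at the intermediate condition; there is no substantive difference.
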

\begin{proof}
\begin{flalign*}
&\fmul_{p/q,pq}(a,c,d)\equiv \fmul_{p/q,pq}(b,c,d)\pmod q \\
\iff &\mul_{p,pq}(\mul_{p,pq}(a,c),\mul_{p,pq}(c,d))\equiv \mul_{p,pq}(\mul_{p,pq}(b,c),\mul_{p,pq}(c,d))\pmod q \\
\overset{L \ref{g2}}{\iff} &\mul_{p,pq}(a,c)\equiv \mul_{p,pq}(b,c)\pmod{q}
\overset{L \ref{g2}}{\iff} a\equiv b\pmod{q} \\
\overset{L \ref{g2}}{\iff} &\mul_{p,pq}(\mul_{p,pq}(a,c),\mul_{p,pq}(c,d))=\mul_{p,pq}(\mul_{p,pq}(b,c),\mul_{p,pq}(c,d)) \\
\iff &\fmul_{p/q,pq}(a,c,d)=\fmul_{p/q,pq}(b,c,d)
\end{flalign*}
\end{proof}

\begin{corollary}\label{strongxz}$\fmul_{p/q,pq}(a,c,d)=\fmul_{p/q,pq}(b,c,e)\\ \implies\fmul_{p/q,pq}(a,c,d)=\fmul_{p/q,pq}(a,c,e)$.\end{corollary}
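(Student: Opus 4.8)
The plan is to mimic the proof of Lemma~\ref{f1}: expand both instances of $\fmul_{p/q,pq}$ through their definition in terms of $\mul_{p,pq}$, and then reduce everything to the basic congruence lemmas for $\mul_{p,pq}$. Writing out the hypothesis as
\[\mul_{p,pq}(\mul_{p,pq}(a,c),\mul_{p,pq}(c,d))=\mul_{p,pq}(\mul_{p,pq}(b,c),\mul_{p,pq}(c,e)),\]
I would abbreviate the inner values by $A=\mul_{p,pq}(a,c)$, $B=\mul_{p,pq}(b,c)$, $D=\mul_{p,pq}(c,d)$ and $E=\mul_{p,pq}(c,e)$, so that the hypothesis reads $\mul_{p,pq}(A,D)=\mul_{p,pq}(B,E)$ while the goal $\fmul_{p/q,pq}(a,c,d)=\fmul_{p/q,pq}(a,c,e)$ becomes the equality $\mul_{p,pq}(A,D)=\mul_{p,pq}(A,E)$.

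First I would apply Lemma~\ref{g1} to the hypothesis $\mul_{p,pq}(A,D)=\mul_{p,pq}(B,E)$, where the differing second arguments $D$ and $E$ are exactly what that lemma tolerates, to obtain $A\equiv B\pmod q$. Next, since $A$ and $B$ are now known to agree modulo $q$, I would feed them into Lemma~\ref{g2} with the \emph{common} second argument $E$; the final equivalence of that lemma promotes the congruence $A\equiv B\pmod q$ to the genuine equality $\mul_{p,pq}(A,E)=\mul_{p,pq}(B,E)$. Chaining this with the hypothesis gives
\[\mul_{p,pq}(A,E)=\mul_{p,pq}(B,E)=\mul_{p,pq}(A,D),\]
which is precisely the desired conclusion once $A$, $D$ and $E$ are re-expanded back into $\fmul_{p/q,pq}(a,c,e)$ and $\fmul_{p/q,pq}(a,c,d)$.

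The only point that needs care is the bookkeeping of which argument is allowed to vary in each step: Lemma~\ref{g1} is the tool that lets the two \emph{second} arguments ($D$ and $E$) differ while still constraining the \emph{first} arguments modulo $q$, whereas Lemma~\ref{g2} requires the second argument to be held fixed (here equal to $E$) in order to turn the mod-$q$ congruence into an equality. Matching these roles correctly is what makes the two-step chain close; beyond this there is no genuine computational obstacle, and the argument is a direct analogue of the reduction used for Lemma~\ref{f1}.
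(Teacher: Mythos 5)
Your proof is correct. It differs mildly from the paper's own argument: the paper stays at the level of $\fmul_{p/q,pq}$ and reuses the two lemmas already established for that local rule, namely Lemma~\ref{f1} applied to the hypothesis to get $a\equiv b\pmod q$, followed by Lemma~\ref{f2} (with the common context $c,e$) to promote this congruence to the equality $\fmul_{p/q,pq}(b,c,e)=\fmul_{p/q,pq}(a,c,e)$, which then chains with the hypothesis. You instead unfold $\fmul_{p/q,pq}$ into its definition via $\mul_{p,pq}$ and run the analogous two-step argument one level down, with Lemma~\ref{g1} playing the role of Lemma~\ref{f1} and Lemma~\ref{g2} that of Lemma~\ref{f2}. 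The two routes are structurally parallel; indeed, Lemmas~\ref{f1} and~\ref{f2} are themselves derived from Lemmas~\ref{g1} and~\ref{g2} by exactly this kind of unfolding, so your argument effectively inlines (part of) their proofs. Your version is self-contained modulo the $\mul_{p,pq}$ lemmas and even slightly more economical in one respect, since you stop at the congruence $\mul_{p,pq}(a,c)\equiv\mul_{p,pq}(b,c)\pmod q$ and never descend all the way to $a\equiv b\pmod q$; the paper's version is shorter on the page because it leverages the already-packaged higher-level lemmas.
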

\begin{proof}By Lemma \ref{f1} $a\equiv b\pmod q$, so
\[\fmul_{p/q,pq}(a,c,d)=\fmul_{p/q,pq}(b,c,e)\overset{L\ref{f2}}{=}\fmul_{p/q,pq}(a,c,e).\]
\end{proof}

On the other hand, we show that as the symbol $a$ varies modulo $q$, the value of $\fmul_{p/q,pq}(a,w)$ remains constant modulo $p$. This is proved by reduction to $\mul_{p,pq}$. 

\begin{lemma}\label{g3}$\mul_{p,pq}(a,c)\equiv \mul_{p,pq}(b,c)\pmod p$.\end{lemma}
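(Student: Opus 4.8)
The plan is to unfold the definition of $\mul_{p,pq}$ directly, exactly as in the proofs of Lemmas~\ref{g1} and~\ref{g2}. First I would write the relevant digits in their canonical base-$q$ decompositions: $a=a_1q+a_0$, $b=b_1q+b_0$ and $c=c_1q+c_0$, where $a_0,b_0,c_0\in\digs_q$ are the low parts and $a_1,b_1,c_1\in\digs_p$ are the high parts. Such decompositions exist and are unique by the very definition of $\mul_{p,pq}$, so no case analysis is needed.

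Next I would substitute into the defining formula $\mul_{p,pq}(x,y)=x_0p+y_1$, which yields $\mul_{p,pq}(a,c)=a_0p+c_1$ and $\mul_{p,pq}(b,c)=b_0p+c_1$. The key observation is that the contributions $a_0p$ and $b_0p$ of the first argument each vanish modulo $p$, so both values are congruent to the common term $c_1$ modulo $p$. Hence $\mul_{p,pq}(a,c)\equiv c_1\equiv\mul_{p,pq}(b,c)\pmod p$, which is precisely the claim.

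There is no real obstacle here: the statement is immediate from the fact that in the output $a_0p+c_1$ the entire dependence on the first argument is carried by a multiple of $p$, hence is invisible modulo $p$. This is the natural mod-$p$ counterpart of Lemma~\ref{g2}, where the first argument instead controlled the output modulo $q$; the present lemma records that modulo $p$ the first argument is irrelevant, which is exactly what is required to establish the complementary ``constant modulo $p$'' behaviour of $\fmul_{p/q,pq}$ announced just before the statement.
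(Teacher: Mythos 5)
Your proof is correct and is essentially identical to the paper's: both use the decompositions $a=a_1q+a_0$, $b=b_1q+b_0$, $c=c_1q+c_0$ and observe that $\mul_{p,pq}(a,c)=a_0p+c_1\equiv c_1\equiv b_0p+c_1=\mul_{p,pq}(b,c)\pmod p$. Nothing is missing.
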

\begin{proof}Let $a=a_1q+a_0$, $b=b_1q+b_0$ and $c=c_1q+c_0$. Then
\[\mul_{p,pq}(a,c)=a_0p+c_1\equiv b_0p+c_1=\mul_{p,pq}(b,c)\pmod p.\]
\end{proof}

\begin{lemma}\label{f3}$\fmul_{p/q,pq}(a,c,d)\equiv \fmul_{p/q,pq}(b,c,d)\pmod p$.\end{lemma}
\begin{proof}
\begin{flalign*}
&\fmul_{p/q,pq}(a,c,d)=\mul_{p,pq}(\mul_{p,pq}(a,c),\mul_{p,pq}(c,d)) \\
\overset{L \ref{g3}}{\equiv}&\mul_{p,pq}(\mul_{p,pq}(b,c),\mul_{p,pq}(c,d))=\fmul_{p/q,pq}(b,c,d)\pmod p.
\end{flalign*}
\end{proof}

For any $a\in \digs_{pq}$ denote
\[Q_{p,q}(a)=\{d\in \digs_{pq}\mid d\equiv a \pmod p\}.\]
The set $Q_{p,q}(a)$ contains $q$ elements, all non-congruent modulo $q$. In particular $Q_{p,q}(a)$ is a complete residue system modulo $q$.

\begin{proposition}\label{flip}Let $Q\subseteq \digs_{pq}$ contain a complete residue system modulo $q$ and let $w\in \digs_{pq}^*$ be such that $\abs{w}\geq 2$. Then
\[\fmul_{p/q,pq}(Qw)=Q_{p,q}(b)w'\]
for some $b\in \digs_{pq}$ and $w'\in \digs_{pq}^*$, $\abs{w'}=\abs{w}-2$. In particular this holds when $Q=Q_{p,q}(a)$ for any $a\in \digs_{pq}$.
\end{proposition}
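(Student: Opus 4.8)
The plan is to apply the extended local rule $\fmul_{p/q,pq}$ directly to each word $aw$ with $a\in Q$ and to track how the output depends on the leading symbol $a$. Writing $w=w[1]\cdots w[n]$ with $n=\abs{w}\geq 2$, the word $aw$ has length $n+1$, so the extended rule produces an output $u[1]\cdots u[n-1]$ of length $n-1$, where $u[i]=\fmul_{p/q,pq}((aw)[i],(aw)[i+1],(aw)[i+2])$. The key observation is that only the first output symbol $u[1]=\fmul_{p/q,pq}(a,w[1],w[2])$ sees the symbol $a$; every later symbol $u[i]$ for $i\geq 2$ is computed from a window lying entirely inside $w$ and is therefore independent of the choice of $a$. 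This immediately yields the constant tail $w'=u[2]\cdots u[n-1]$ of length $(n-1)-1=\abs{w}-2$ (with $w'=\epsilon$ in the boundary case $\abs{w}=2$).

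It then remains to determine the set of possible values of $u[1]=\fmul_{p/q,pq}(a,w[1],w[2])$ as $a$ ranges over $Q$. First I would invoke Lemma \ref{f3} to see that this value is constant modulo $p$: choosing $b$ so that $\fmul_{p/q,pq}(a,w[1],w[2])\equiv b\pmod p$, we get $u[1]\in Q_{p,q}(b)$ for every $a$. Next I would use Lemma \ref{f2}, which says that $\fmul_{p/q,pq}(a,w[1],w[2])$ is determined by, and varies in step with, the residue $a\bmod q$. Since $Q$ contains a complete residue system modulo $q$, the symbol $u[1]$ therefore attains $q$ pairwise distinct values, all lying in $Q_{p,q}(b)$. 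As $Q_{p,q}(b)$ has exactly $q$ elements, one in each residue class modulo $q$ (as recorded just before the statement), these $q$ distinct values must fill out all of $Q_{p,q}(b)$. Combining this with the constant tail gives $\fmul_{p/q,pq}(Qw)=Q_{p,q}(b)w'$.

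There is no serious obstacle here; the argument is a bookkeeping exercise built on the partial-permutivity Lemmas \ref{f2} and \ref{f3}. The only points requiring a little care are the indexing of the extended local rule, to make sure the tail positions $u[2],\dots,u[n-1]$ really avoid position $1$ of $aw$, and the counting step, where one must note that extra elements of $Q$ sharing a residue class modulo $q$ contribute no new values of $u[1]$, so that the image is exactly $Q_{p,q}(b)$ rather than a proper subset or superset. The final sentence of the statement is then immediate, since $Q_{p,q}(a)$ is itself a complete residue system modulo $q$.
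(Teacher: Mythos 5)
Your proof is correct and follows essentially the same route as the paper's: the paper likewise reduces to the first output symbol (by restricting to words of length $2$), then uses Lemma \ref{f3} to get containment in $Q_{p,q}(b)$ and Lemma \ref{f2} to get exactly $q$ distinct values, forcing equality. Your version merely spells out the indexing of the extended local rule and the tail's independence of $a$, which the paper leaves implicit.
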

\begin{proof}
It is sufficient to prove this for words $w\in\digs_{pq}^2$ of length $2$. Let $a\in Q$ be arbitrary and $b=\fmul_{p/q,pq}(a,w[1],w[2])$. By Lemma \ref{f3} $\fmul_{p/q,pq}(Qw)\subseteq Q_{p,q}(b)$. To prove equality it is sufficient to show that $\abs{\fmul_{p/q,pq}(Qw)}=q$, but this follows from Lemma \ref{f2}.
\end{proof}

Consider two configurations that represent the same number in base $6$, e.g. $\cdots000.300\cdots$ and $\cdots000.255\cdots$ that represent the number $1/2$. From the facts that $\Mul_{3/2,6}$ is bijective and maps finite configurations to finite configurations it follows that these two configurations are mapped to $\cdots000.4300\cdots$ and $\cdots000.4255\cdots$ respectively, i.e. to the two base-$6$ representatives of the number $3/4$. In this case one can also observe that the infinite sequences $300\cdots$ and $255\cdots$ are shifted by one position to the right by the action of $\Mul_{3/2,6}$. This observation is generalized in the following lemma and its corollary.

\begin{lemma}\label{tailInt}Let $Q=\{np\mid 1\leq n<q\}\subseteq \digs_{pq}$. For any $s\in Q$, $j\in\Z$ define $e_{s,j},e_{s-1,j}\in \digs_{pq}^\Z$ by
\begin{flalign*}
\begin{array}{l l}
e_{s,j}[i]=
\left\{
\begin{array}{l}
s \text{ when } i=j,\\
0 \text{ when } i>j,
\end{array}
\right.
&
e_{s-1,j}[i]=
\left\{
\begin{array}{l}
s-1 \text{ when } i=j,\\
pq-1 \text{ when } i>j
\end{array}
\right.
\end{array}
\end{flalign*}
(their values at $i<j$ are irrelevant). For any $x\in \digs_{pq}^\Z$, $s\in Q$ and $j\in\Z$ there exist $x'\in \digs_{pq}^\Z$ and $s'\in Q$ such that
\[\Mul_{p,pq}(x\glue_j e_{s,j})=x'\glue_j e_{s',j} \hspace{0.3cm} \text{and} \hspace{0.3cm} \Mul_{p,pq}(x\glue_j e_{s-1,j})=x'\glue_j e_{s'-1,j}.\]
\end{lemma}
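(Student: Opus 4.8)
The plan is to exploit the fact that $\Mul_{p,pq}$ has memory $0$ and anticipation $1$, so that $\Mul_{p,pq}(y)[i]=\mul_{p,pq}(y[i],y[i+1])$ depends only on the two coordinates $i$ and $i+1$. Writing $y=x\glue_j e_{s,j}$ and $z=x\glue_j e_{s-1,j}$, the two inputs agree with each other and with $x$ on all coordinates $i<j$, so the only coordinates where the outputs could differ, or could fail to have the prescribed tail form, are $i=j-1$, $i=j$ and $i>j$. I would treat these three regions by direct computation using $\mul_{p,pq}(a,b)=a_0p+b_1$ (where $a=a_1q+a_0$, $b=b_1q+b_0$ with $a_0,b_0\in\digs_q$ and $a_1,b_1\in\digs_p$); recall that the values of $e_{s,j}$ and $e_{s-1,j}$ at $i<j$ are irrelevant precisely because the gluing at $j$ replaces them by $x$.

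First, for the tails $i>j$: since $\mul_{p,pq}(0,0)=0$ and $\mul_{p,pq}(pq-1,pq-1)=(q-1)p+(p-1)=pq-1$, the constant tails $0^\infty$ and $(pq-1)^\infty$ are reproduced, so $\Mul_{p,pq}(y)[i]=0$ and $\Mul_{p,pq}(z)[i]=pq-1$ for $i>j$. Next, at $i=j$, set $s_0=s\bmod q$. Because $s=np$ with $1\le n<q$ and $\gcd(p,q)=1$, we have $s_0\in\{1,\dots,q-1\}$; hence $\mul_{p,pq}(s,0)=s_0p$, and I would set $s'=s_0p$, which lies in $Q$ precisely because $1\le s_0<q$. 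Since $s_0\ge1$ we also have $(s-1)\bmod q=s_0-1$, so $\mul_{p,pq}(s-1,pq-1)=(s_0-1)p+(p-1)=s_0p-1=s'-1$. Thus the coordinate $j$ and the tail of $\Mul_{p,pq}(y)$ match $e_{s',j}$, and those of $\Mul_{p,pq}(z)$ match $e_{s'-1,j}$.

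The key step, and the only place where a genuine coincidence is needed, is the coordinate $i=j-1$, since there the two inputs differ: $y[j]=s$ while $z[j]=s-1$. Here I would observe that $\mul_{p,pq}(a,b)=a_0p+b_1$ depends on its second argument $b$ only through $b_1=\lfloor b/q\rfloor$, so $\mul_{p,pq}(x[j-1],s)=\mul_{p,pq}(x[j-1],s-1)$ as soon as $\lfloor s/q\rfloor=\lfloor(s-1)/q\rfloor$. This equality holds exactly because $s$ is not a multiple of $q$ (as $s_0\ne0$), so no multiple of $q$ lies strictly between $s-1$ and $s$. This is where the choice of $Q$ as nonzero multiples of $p$ together with the coprimality of $p$ and $q$ is essential: it simultaneously guarantees $s'\in Q$ and forces $\Mul_{p,pq}(y)$ and $\Mul_{p,pq}(z)$ to agree at $j-1$.

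Finally I would define $x'$ by $x'[i]=\Mul_{p,pq}(y)[i]$ for $i<j$ (its values at $i\ge j$ being irrelevant). The computations above show $\Mul_{p,pq}(y)[i]=\Mul_{p,pq}(z)[i]$ for every $i<j$, so this single $x'$ witnesses both $\Mul_{p,pq}(x\glue_j e_{s,j})=x'\glue_j e_{s',j}$ and $\Mul_{p,pq}(x\glue_j e_{s-1,j})=x'\glue_j e_{s'-1,j}$ with $s'\in Q$, completing the argument. The only subtle point is the coincidence at $j-1$; everything else is routine evaluation of $\mul_{p,pq}$.
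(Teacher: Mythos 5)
Your proof is correct and follows essentially the same route as the paper's: the same splitting into coordinates $i<j-1$, $i=j-1$, $i=j$, $i>j$, the same evaluations $\mul_{p,pq}(0,0)=0$, $\mul_{p,pq}(pq-1,pq-1)=pq-1$, $\mul_{p,pq}(s,0)=s_0p=s'$, $\mul_{p,pq}(s-1,pq-1)=s'-1$, and the same key observation that $\lfloor s/q\rfloor=\lfloor (s-1)/q\rfloor$ because $s\in Q$ is not divisible by $q$. Your explicit remark on why coprimality makes the coincidence at $j-1$ work is a nice touch, but the argument is the paper's.
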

\begin{proof}
Denote $x_1=\Mul_{p,pq}(x\glue_j e_{s,j})$ and $x_2=\Mul_{p,pq}(x\glue_j e_{s-1,j})$. Clearly $x_1[i]=x_2[i]$ for $i\leq j-2$. The claim that $x_1[i]=0$ and $x_2[i]=pq-1$ for $i>j$ follows by checking that $\mul_{p,pq}(0,0)=0$ and $\mul_{p,pq}(pq-1,pq-1)=pq-1$. It remains to show that $x_1[j-1]=x_2[j-1]$, $x_1[j]=s'$ and $x_2[j]=s'-1$ for some $s'\in Q$. 

Let us write $x[j-1]=a_1q+a_0$, $s=s_1q+s_0$ and $s-1=s_1q+(s_0-1)$ where $a_1,s_1\in \digs_p$ and $a_0,s_0,s_0-1\in \digs_q$: this is possible because $s$ is not divisible by $q$. Then
\begin{flalign*}
x_1[j-1]&=\mul_{p,pq}(x[j-1],s)=a_0p+s_1=\mul_{p,pq}(x[j-1],s-1)=x_2[j-1], \\
x_1[j]&=\mul_{p,pq}(s,0)=\mul_{p,pq}(s_1q+s_0,0q+0)=s_0p\doteqdot s'\in Q, \\
x_2[j]&=\mul_{p,pq}(s-1,pq-1)=\mul_{p,pq}(s_1q+(s_0-1),(p-1)q+(q-1)) \\
&=(s_0-1)p+(p-1)=s'-1.
\end{flalign*}
\end{proof}

\begin{corollary}\label{tail}Using the notation of the previous lemma, for any $x\in \digs_{pq}^Z$, $s\in Q$ and $j\in\Z$ there exist $x'\in \digs_{pq}^\Z$ and $s'\in Q$ such that
\[\Mul_{p/q,pq}(x\glue_j e_{s,j})=x'\glue_{j+1} e_{s',j+1} \hspace{0.3cm} \text{and} \hspace{0.3cm} \Mul_{p/q,pq}(x\glue_j e_{s-1,j})=x'\glue_j e_{s'-1,j+1}.\]
\end{corollary}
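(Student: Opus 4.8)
The plan is to use the decomposition $\Mul_{p/q,pq}=\sigma^{-1}\circ \Mul_{p,pq}\circ \Mul_{p,pq}$ established when $\Mul_{p/q,pq}$ was constructed, and to push the two configurations $x\glue_j e_{s,j}$ and $x\glue_j e_{s-1,j}$ through this composition one factor at a time. Since Lemma~\ref{tailInt} describes exactly the effect of a single $\Mul_{p,pq}$ on the normal forms $x''\glue_j e_{t,j}$ and $x''\glue_j e_{t-1,j}$, the corollary should follow by applying that lemma twice and then tracking the distinguished position under the final shift.

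First I would apply Lemma~\ref{tailInt} to obtain $x^{(1)}\in\digs_{pq}^\Z$ and $s^{(1)}\in Q$ with $\Mul_{p,pq}(x\glue_j e_{s,j})=x^{(1)}\glue_j e_{s^{(1)},j}$ and, for the \emph{same} $x^{(1)}$ and $s^{(1)}$, $\Mul_{p,pq}(x\glue_j e_{s-1,j})=x^{(1)}\glue_j e_{s^{(1)}-1,j}$. The essential feature here is that both branches remain synchronized: they share the same left part and the same anchored magnitude $s^{(1)}$, differing only in the anchored digit and in the tail. Because the two outputs are again of the shape required by the hypothesis of the lemma (tails $e_{t,j}$ resp.\ $e_{t-1,j}$ anchored at the same $j$, with $s^{(1)}\in Q$), I would apply the lemma a second time with $x^{(1)},s^{(1)}$ in place of $x,s$, producing $x^{(2)}\in\digs_{pq}^\Z$ and $s^{(2)}\in Q$ such that $\Mul_{p,pq}\circ\Mul_{p,pq}(x\glue_j e_{s,j})=x^{(2)}\glue_j e_{s^{(2)},j}$ and $\Mul_{p,pq}\circ\Mul_{p,pq}(x\glue_j e_{s-1,j})=x^{(2)}\glue_j e_{s^{(2)}-1,j}$.

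Finally I would apply $\sigma^{-1}$ and record the elementary index identity $\sigma^{-1}(x''\glue_j e_{t,j})=\sigma^{-1}(x'')\glue_{j+1} e_{t,j+1}$, and likewise with $e_{t-1,j}$ in place of $e_{t,j}$: the shift slides the anchored digit from position $j$ to position $j+1$, converts the old tail into $e_{t,j+1}[j+1,\infty]$, and leaves positions $\leq j$ as the shifted left part. Setting $x'=\sigma^{-1}(x^{(2)})$ and $s'=s^{(2)}\in Q$ then gives $\Mul_{p/q,pq}(x\glue_j e_{s,j})=x'\glue_{j+1} e_{s',j+1}$ and $\Mul_{p/q,pq}(x\glue_j e_{s-1,j})=x'\glue_{j+1} e_{s'-1,j+1}$, as required. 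The only delicate point is purely notational: the statement writes the second equation with $\glue_j$ rather than $\glue_{j+1}$, but this is harmless because the value of $e_{s'-1,j+1}$ at position $j$ is declared irrelevant in Lemma~\ref{tailInt}, so $x'\glue_j e_{s'-1,j+1}$ and $x'\glue_{j+1} e_{s'-1,j+1}$ denote the same configuration up to that single irrelevant coordinate; I would simply flag this identification. I anticipate no real obstacle here, as once the decomposition is in hand the corollary is a two-fold iteration of Lemma~\ref{tailInt} followed by the shift bookkeeping.
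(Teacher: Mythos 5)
Your proof is correct and is exactly the intended argument: the paper states Corollary~\ref{tail} without any written proof, as an immediate consequence of applying Lemma~\ref{tailInt} twice through the decomposition $\Mul_{p/q,pq}=\sigma^{-1}\circ\Mul_{p,pq}\circ\Mul_{p,pq}$ (the synchronization of the two branches via a common $x'$ and $s'$ being precisely what the lemma provides at each step) and then shifting indices. Your handling of the discrepancy between $\glue_j$ and $\glue_{j+1}$ in the second displayed equation is also right: since the value of $e_{s'-1,j+1}$ at position $j$ is declared irrelevant, the two gluings denote the same configuration under a suitable choice of that coordinate.
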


\section{Traces of Fractional Multiplication Automata}\label{trSect}

In this section we assume that $p>q>1$ are coprime integers unless otherwise specified. We will prove our main results: the trace subshift $\trsh(\Mul_{p/q,pq})$ is not synchronizing and its intersection with $\digs_p^\Z$ is not sofic.

To simplify the notation, we will denote for coprime $s,t>1$ (not necessarily $s>t$) $\tr_{s/t,I}(x)=\tr_{\Mul_{s/t,st},I}(x)$, $\trsh_{s/t}=\trsh(\Mul_{s/t,st})$, $L(s/t)=L(\trsh_{s/t})$ and $\pre_{s/t}=\pre_{\trsh_{s/t}}$. We will abuse notation and define the trace with respect to $\Mul_{s/t,st}$ also for positive real numbers.

\begin{definition}For $\xi\in\Rpos$ we call sequence
\[\tr_{s/t}(\xi)=\tr_{s/t}(\config_{st}(\xi))\]
the \emph{trace} $s/t$\emph{-representation} of $\xi$.
\end{definition}

Since $\config_{st}(\Rpos)$ is a dense subset of $\digs_{st}^\Z$, it follows that $\trsh_{s/t}$ is the topological closure of $\tr_{s/t}(\Rpos)$.

Following \cite{AFS08}, let $\psi_{p/q}:\Rpos\to\Z$ be the function defined by
\[\psi_{p/q}(\xi)=q\left\lfloor\frac{p}{q}\xi\right\rfloor-p\lfloor \xi\rfloor=p\fractional(\xi)-q\fractional\left(\frac{p}{q}\xi\right).\]
This function is periodic of period $q$ and for every $\xi\in\Rpos$, $\psi_{p/q}(\xi)$ belongs to the set
\[\digs_{-q,p}\doteqdot\{-(q-1),\dots,0,1,\dots (p-1)\}.\]

\begin{definition}For every $\xi\in\Rpos$, the infinite sequence $\varphi_{p/q}(\xi)\subseteq \digs_{-q,p}^\Z$ defined by
\[\varphi_{p/q}(\xi)[i]=\psi_{p/q}\left(\left(\frac{p}{q}\right)^i \xi\right)\text{ for every } i\in\Z\]
is called the \emph{companion} $p/q$\emph{-representation} of $\xi$. The topological closure of $\varphi_{p/q}(\Rpos)\subseteq \digs_{-q,p}^\Z$
is a subshift denoted by $Y_{p/q}$.\end{definition}

The subscript $p/q$ is omitted from all notations when it is clear from the context.

The name ``companion $p/q$-representation'' was introduced in \cite{AFS08}, probably to signify its connection to another type of a number representation system considered in the same paper. We adopt the same name because it will turn out that the companion $p/q$-representations are also strongly connected to trace $p/q$-representations. The earliest occurrence of the sequence $\varphi(\xi)$ seems to be in a paper of Forman and Shapiro \cite{FS67} (where it has not been named). This representation, and its generalizations, also comes up in a sequence of papers by Dubickas starting from \cite{Dub05}.

The following lemma from~\cite{AFS08} shows that $\varphi(\xi)$ really is in some sense a representation of $\xi$ in base $p/q$. 

\begin{lemma}\label{compfrac}$\fractional(\xi)=\frac{1}{p}\sum_{i=0}^{\infty}\left(\frac{q}{p}\right)^i\varphi(\xi)[i]$ for every $\xi\in\Rpos$.\end{lemma}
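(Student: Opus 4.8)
The plan is to prove the identity
\[
\fractional(\xi)=\frac{1}{p}\sum_{i=0}^{\infty}\left(\frac{q}{p}\right)^i\varphi(\xi)[i]
\]
by establishing a telescoping recurrence for the partial sums and then checking that the tail vanishes. First I would unfold the definition of $\varphi(\xi)[i]$, namely $\varphi(\xi)[i]=\psi_{p/q}\bigl((p/q)^i\xi\bigr)$, and recall the second form of $\psi_{p/q}$ given in the excerpt, $\psi_{p/q}(\eta)=p\,\fractional(\eta)-q\,\fractional\!\left(\tfrac{p}{q}\eta\right)$. Substituting $\eta=(p/q)^i\xi$, each term becomes
\[
\left(\frac{q}{p}\right)^i\varphi(\xi)[i]
=\left(\frac{q}{p}\right)^i\!\left[p\,\fractional\!\left(\tfrac{p^i}{q^i}\xi\right)-q\,\fractional\!\left(\tfrac{p^{i+1}}{q^{i+1}}\xi\right)\right].
\]
The key observation is that this is a telescoping expression: after multiplying by $1/p$ the $i$-th term equals $a_i-a_{i+1}$ where $a_i=(q/p)^i\fractional\bigl((p/q)^i\xi\bigr)$.

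With that in hand, the main step is to compute the partial sum explicitly. Summing $a_i-a_{i+1}$ from $i=0$ to $N-1$ yields $a_0-a_N$, so
\[
\frac{1}{p}\sum_{i=0}^{N-1}\left(\frac{q}{p}\right)^i\varphi(\xi)[i]
=\fractional(\xi)-\left(\frac{q}{p}\right)^N\fractional\!\left(\left(\frac{p}{q}\right)^N\xi\right).
\]
Here I use $a_0=\fractional(\xi)$. Then I would let $N\to\infty$. Since $0\le q<p$, the factor $(q/p)^N$ tends to $0$, and the fractional part is bounded in $[0,1)$, so the subtracted term vanishes in the limit. This gives exactly $\fractional(\xi)=\frac{1}{p}\sum_{i=0}^{\infty}(q/p)^i\varphi(\xi)[i]$, as claimed. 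I would also remark that the series converges absolutely, since $\abs{\varphi(\xi)[i]}\le\max\{p-1,q-1\}=p-1$ and $q/p<1$, so the infinite sum is well-defined and the manipulation is justified.

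The proof is essentially a routine telescoping argument, so there is no serious obstacle; the only point requiring a little care is the bookkeeping in the telescoping, specifically verifying that the second form of $\psi_{p/q}$ aligns the index shift correctly so that the $q\,\fractional((p/q)^{i+1}\xi)$ term of the $i$-th summand cancels against the $p\,\fractional((p/q)^{i+1}\xi)$ term of the $(i+1)$-th summand after the geometric weights are accounted for. I expect the mild subtlety to be confirming that the boundary behaviour is correct—that $a_0$ is genuinely $\fractional(\xi)$ and that $(q/p)^N\fractional((p/q)^N\xi)\to 0$—but both are immediate from $q<p$ and the boundedness of $\fractional$.
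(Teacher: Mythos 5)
Your proof is correct and is essentially the same argument as the paper's: the paper writes the identity $\varphi(\xi)[i]=p\,\fractional((p/q)^i\xi)-q\,\fractional((p/q)^{i+1}\xi)$ as the recurrence $y_i=\frac{1}{p}s_i+\frac{q}{p}y_{i+1}$ and iterates it, which is exactly your telescoping with $a_i=(q/p)^i\fractional((p/q)^i\xi)$, and both arguments kill the tail using $q/p<1$ and the boundedness of the fractional part. Your write-up is if anything slightly more careful about the limit and absolute convergence, which the paper leaves implicit.
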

\begin{proof}For $i\in\N$ denote $y_i=\fractional((p/q)^i \xi)$ and $s_i=\varphi(\xi)[i]=py_i-qy_{i+1}$. From this we can solve
\[y_0=\frac{1}{p}s_0+\frac{q}{p}y_1=\frac{1}{p}s_0+\frac{1}{p}\frac{q}{p}s_1+\left(\frac{q}{p}\right)^2 y_2=\cdots=\frac{1}{p}\sum_{i=0}^{\infty}\left(\frac{q}{p}\right)^i s_i.\]
\end{proof}

\begin{definition}For $n>1$ define $\md_n:\Z\to \digs_n$ by 
\[\md_n(m)=m-n\lfloor m/n\rfloor,\] i.e. $\md_n(m)$ is the remainder of $m$ divided by $n$. It can be extended to a function $\Z^\Z\to \digs_n^\Z$ by coordinatewise application.
\end{definition}

\begin{definition}\label{phiDef}For every $x\in \digs_{pq}^\Z$ define the bi-infinite sequence $\Phi(x)$ by
\[\Phi(x)[i]=q\md_p(x[i+1])-p\md_q(x[i])\text{ for every } i\in\Z.\]
\end{definition}

The map $\Phi$ connects the two different $p/q$ representations. 

\begin{theorem}\label{repchange}$\Phi(\tr(\xi))=\varphi(\xi)$ for every $\xi\in\Rpos$.\end{theorem}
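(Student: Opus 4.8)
The plan is to prove the pointwise identity $\Phi(\tr(\xi))[i]=\varphi(\xi)[i]$ for every $i\in\Z$ and every $\xi\in\Rpos$. Since both sequences are defined coordinatewise in $i$ and the shift $\sigma$ on $\trsh_{p/q}$ corresponds to advancing $\xi$ to $(p/q)\xi$ in the companion representation, it suffices by this equivariance to check the identity at a single coordinate, say $i=0$; the general coordinate then follows by replacing $\xi$ with $(p/q)^i\xi$. Concretely, the $i=0$ case asks us to show
\[
q\,\md_p(x[1])-p\,\md_q(x[0]) \;=\; \psi_{p/q}(\xi),
\]
where $x=\config_{pq}(\xi)$ and we recall $\psi_{p/q}(\xi)=p\fractional(\xi)-q\fractional\!\left(\tfrac{p}{q}\xi\right)$.

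First I would unwind what $\md_q(x[0])$ and $\md_p(x[1])$ mean in terms of $\xi$. The key observation is that $x[0]$ and $x[1]$ are the digits of $\config_{pq}(\xi)$ immediately to the right of the radix point, so they encode $\fractional(\xi)$ up to base-$pq$ precision. Writing each digit in the mixed-radix form $x[i]=x[i]_1q+x[i]_0$ used throughout Section~2 (with $x[i]_0\in\digs_q$, $x[i]_1\in\digs_p$), we have $\md_q(x[0])=x[0]_0$ and $\md_p(x[1])=x[1]_1 \bmod p$; here I would relate $x[0]_0$ to $\lfloor q\fractional(\xi)\rfloor$ and $x[1]_1$ to the leading digit of $\fractional(\tfrac{p}{q}\xi)$ via the explicit action of the multiplication automaton. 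The cleanest route is to exploit that $\Mul_{p,pq}$ realizes multiplication by $p$ (Lemma~\ref{vastaavuus}) and that $\Mul_{p/q,pq}=\sigma^{-1}\circ\Mul_{p,pq}\circ\Mul_{p,pq}$, so the digit $x[1]$ together with the local structure of the automaton records exactly the carry information distinguishing $\fractional(\xi)$ from $\fractional(\tfrac{p}{q}\xi)$.

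The main obstacle, and the heart of the proof, is the bookkeeping that identifies the two explicitly defined integer quantities with the two fractional-part expressions in $\psi_{p/q}$. I would argue as follows: the quantity $p\fractional(\xi)-q\fractional(\tfrac{p}{q}\xi)$ is an integer in $\digs_{-q,p}$ (as recorded after the definition of $\psi_{p/q}$), and both $\md_p(x[1])$ and $\md_q(x[0])$ are determined by the top-order digits of $\xi$ and $\tfrac{p}{q}\xi$. Using $\lfloor \tfrac{p}{q}\xi\rfloor=p\lfloor\xi\rfloor/q + (\text{correction})$ is awkward because $q\nmid p\lfloor\xi\rfloor$ in general, so instead I would work directly with the base-$pq$ expansions: $\md_q(x[0])$ picks out $\lfloor q\fractional(\xi)\rfloor$ in a suitable sense, and $\md_p(x[1])$ picks out the analogous leading contribution to $\fractional(\tfrac{p}{q}\xi)$, because multiplying a base-$pq$ configuration by $p$ and shifting is precisely what the companion representation measures. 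Matching coefficients $q$ against $\md_p(x[1])$ and $p$ against $\md_q(x[0])$ should then reproduce $\psi_{p/q}(\xi)$ exactly.

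Two sanity checks guard the computation. Both sides take values in the same finite set: $\psi_{p/q}(\xi)\in\digs_{-q,p}$, while $q\,\md_p(x[1])-p\,\md_q(x[0])$ ranges in $\{-p(q-1),\dots,q(p-1)\}$, and one verifies the residues modulo $p$ and modulo $q$ coincide using the partial-permutivity lemmas (Lemmas~\ref{f2} and \ref{f3}) and the mod-$q$ structure of the digits. Since $p$ and $q$ are coprime, agreement modulo $p$ and modulo $q$ together with the matching range pins down the common value, closing the argument. I would finish by invoking density: $\Phi$ and the trace map are continuous, $\config_{pq}(\Rpos)$ is dense, and $\varphi$ extends continuously, so the pointwise identity on $\Rpos$ is exactly the claimed statement $\Phi(\tr(\xi))=\varphi(\xi)$.
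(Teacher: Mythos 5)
There is a genuine gap, and it occurs at the very first step: you have conflated the trace with the configuration. The trace is a \emph{temporal} sequence, $\tr(\xi)[t]=\Mul_{p/q,pq}^t(\config_{pq}(\xi))[0]$, i.e.\ the column of unit digits of $\xi,(p/q)\xi,(p/q)^2\xi,\dots$ read down the space-time diagram, whereas you apply $\Phi$ to the \emph{spatial} digits of $\config_{pq}(\xi)$. Consequently the identity you set out to prove at $i=0$, namely $q\md_p(x[1])-p\md_q(x[0])=\psi_{p/q}(\xi)$ with $x=\config_{pq}(\xi)$, is simply false: for $p/q=3/2$ and $\xi=1$ we have $x=\cdots 0001.000\cdots$, so your left-hand side is $2\md_3(0)-3\md_2(1)=-3$, while $\psi_{3/2}(1)=2\lfloor 3/2\rfloor-3\lfloor 1\rfloor=-1$ (note also that $-3\notin\digs_{-2,3}$, so your range sanity check already fails). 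The correct coordinate-$0$ identity must use $\tr(\xi)[1]=\Mul_{p/q,pq}(\config_{pq}(\xi))[0]$, the unit digit of $(p/q)\xi$, in place of $x[1]$, which is instead the first digit of $\xi$ to the right of the radix point. (There is also a convention slip: $x[0]$ is the unit digit, to the \emph{left} of the point, since $\config_n(\xi)[i]=\xi_{-i}$.) Because of this mis-identification, the residue-matching plan in your last paragraph cannot rescue the argument: the two quantities genuinely differ, not merely up to a congruence-plus-range check.

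For comparison, the paper's proof is a short direct computation built on exactly the identification you are missing. Write $(p/q)^i\xi=n_iq+a_i+\xi_i$ with $n_i\in\N$, $a_i\in\digs_q$, $\xi_i\in[0,1)$; then $(p/q)^{i+1}\xi=n_ip+\tfrac{p}{q}(a_i+\xi_i)=n_ip+b_i+\xi_i'$ with $b_i\in\digs_p$, $\xi_i'\in[0,1)$, whence
\[
\varphi(\xi)[i]=q\left\lfloor(p/q)^{i+1}\xi\right\rfloor-p\left\lfloor(p/q)^i\xi\right\rfloor=qb_i-pa_i.
\]
Since $\tr(\xi)[i]$ is the unit digit of the base-$pq$ expansion of $(p/q)^i\xi$, one has $a_i=\md_q(\tr(\xi)[i])$ and $b_i=\md_p(\tr(\xi)[i+1])$, and the right-hand side above is precisely $\Phi(\tr(\xi))[i]$. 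Your closing density argument is also unnecessary: the statement to be proved \emph{is} the pointwise identity on $\Rpos$, with no closure to pass to. If you repair your reduction by replacing $x[1]$ with the unit digit of $(p/q)\xi$, your mod-$p$ and mod-$q$ bookkeeping collapses into the computation above.
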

\begin{proof}For every $i\in\Z$ we can write
\[\left(\frac{p}{q}\right)^i \xi=n_i q+a_i+\xi_i,\]
where $n_i\in\N$, $a_i\in \digs_q$ and $\xi_i\in[0,1)$ are unique. Then
\[\left(\frac{p}{q}\right)^{i+1} \xi= n_i p+\frac{p}{q}(a_i+\xi_i)=n_i p+ b_i+\xi_i'\]
for unique $b_i\in \digs_p$ and $\xi_i'\in[0,1)$, because $\frac{p}{q}(a_i+\xi_i)\in [0,p)$. Thus
\begin{flalign*}
\varphi(\xi)[i]&=\psi_{p/q}\left(\left(\frac{p}{q}\right)^i \xi\right)=q\left\lfloor\left(\frac{p}{q}\right)^{i+1}\xi\right\rfloor-p\left\lfloor\left(\frac{p}{q}\right)^i \xi\right\rfloor \\
&=q\lfloor n_i p+ b_i+\xi_i' \rfloor-p\lfloor n_i q+a_i+\xi_i\rfloor=qb_i-pa_i \\
&=q\md_p(\tr(\xi)[i+1])-p\md_q(\tr(\xi)[i])=\Phi(\tr(\xi))[i].
\end{flalign*}
\end{proof}

The correspondence between the two $p/q$ representations extends to the level of the induced subshifts $\trsh_{p/q}$ and $Y_{p/q}$.

\begin{theorem}$\Phi:\trsh_{p/q}\to Y_{p/q}$ is a conjugacy.\end{theorem}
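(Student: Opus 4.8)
The plan is to prove that $\Phi:\trsh_{p/q}\to Y_{p/q}$ is a conjugacy by establishing four things: that $\Phi$ is a well-defined sliding block code (a morphism), that it actually maps into $Y_{p/q}$, that it is surjective, and that it is injective. Since a bijective sliding block code is automatically a conjugacy (as noted in the preliminaries), these suffice.

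First I would observe that $\Phi$ as given in Definition~\ref{phiDef} is visibly a sliding block code on $\digs_{pq}^\Z$: the value $\Phi(x)[i]=q\md_p(x[i+1])-p\md_q(x[i])$ depends only on the two coordinates $x[i],x[i+1]$, and it commutes with the shift. Restricting this block map to the subshift $\trsh_{p/q}$ gives a morphism $\Phi:\trsh_{p/q}\to\digs_{-q,p}^\Z$ (one should check the image lands in $\digs_{-q,p}^\Z$, which follows from the range of $\psi_{p/q}$ computed via $\md$). To see that the image lies in $Y_{p/q}$, I would use Theorem~\ref{repchange} together with density: by that theorem $\Phi(\tr(\xi))=\varphi(\xi)$ for every $\xi\in\Rpos$, so $\Phi(\tr_{p/q}(\Rpos))=\varphi(\Rpos)$. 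Since $\trsh_{p/q}$ is the closure of $\tr_{p/q}(\Rpos)$ and $\Phi$ is continuous, $\Phi(\trsh_{p/q})\subseteq\overline{\varphi(\Rpos)}=Y_{p/q}$, which simultaneously gives well-definedness of the codomain. The same continuity-plus-density argument gives surjectivity: $Y_{p/q}=\overline{\varphi(\Rpos)}=\overline{\Phi(\tr_{p/q}(\Rpos))}\subseteq\Phi(\overline{\tr_{p/q}(\Rpos)})=\Phi(\trsh_{p/q})$, where the middle containment uses that the continuous image of a compact (hence closed) set is closed, so $\Phi(\trsh_{p/q})$ is already closed and contains the dense set $\varphi(\Rpos)$.

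The main obstacle, and the genuinely arithmetic part, is \textbf{injectivity} of $\Phi$ on $\trsh_{p/q}$. The block rule $x[i],x[i+1]\mapsto q\md_p(x[i+1])-p\md_q(x[i])$ is not injective as a map on pairs, so I cannot conclude injectivity locally; I must exploit the global structure of points of $\trsh_{p/q}$. The idea is to recover a point $z\in\trsh_{p/q}$ coordinate by coordinate from $\Phi(z)$. Writing $z[i]=q\cdot\md_p(z[i])/\!\!/\;$\dots more precisely, each symbol $z[i]\in\digs_{pq}$ is determined by the pair $(\md_p(z[i]),\md_q(z[i)])$ via the Chinese Remainder Theorem, since $p,q$ are coprime. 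The relation $\Phi(z)[i]=q\md_p(z[i+1])-p\md_q(z[i])$ is exactly a statement in $\Z/pq\Z$ that, reduced modulo $p$, yields $\md_p(z[i])$ from $\Phi(z)[i]$, and reduced modulo $q$ yields $\md_q(z[i+1])$ from $\Phi(z)[i]$. Concretely, $\Phi(z)[i]\equiv -p\,\md_q(z[i])\pmod p$ is vacuous, so instead I would read $\Phi(z)[i]\equiv q\,\md_p(z[i+1])\pmod p$ and $\Phi(z)[i]\equiv -p\,\md_q(z[i])\pmod q$; since $q$ is invertible mod $p$ and $p$ is invertible mod $q$, these determine $\md_p(z[i+1])$ and $\md_q(z[i])$ respectively from the single symbol $\Phi(z)[i]$.

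Thus from $\Phi(z)$ one recovers $\md_q(z[i])$ for all $i$ (from $\Phi(z)[i]$) and $\md_p(z[i+1])$ for all $i$ (equivalently $\md_p(z[i])$ for all $i$), and by CRT this pins down every $z[i]\in\digs_{pq}$ uniquely. Hence if $\Phi(z)=\Phi(z')$ then $z=z'$, giving injectivity. I would phrase this cleanly by exhibiting an explicit inverse block map $\Psi$ on sequences over $\digs_{-q,p}$, defined by $\Psi(y)[i]=\text{CRT}\bigl(\md_p(z[i]),\md_q(z[i])\bigr)$ where the two residues are read off from $y[i-1]$ and $y[i]$ as above, and verifying $\Psi\circ\Phi=\mathrm{id}$; this also re-proves injectivity and makes the conjugacy concrete. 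The one subtlety to check is that the two residue readings are mutually consistent on overlapping coordinates, which they are because they come from genuine preimage points $z\in\trsh_{p/q}$. With injectivity established, $\Phi$ is a continuous bijection between compact metric spaces commuting with the shift, hence a conjugacy.
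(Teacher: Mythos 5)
Your proposal is correct and takes essentially the same route as the paper's proof: injectivity by reading off $\md_p(z[i])$ and $\md_q(z[i])$ from the two adjacent image coordinates $\Phi(z)[i-1]$ and $\Phi(z)[i]$ (using that $q$ is invertible mod $p$ and $p$ is invertible mod $q$) and combining them via the Chinese Remainder Theorem, then surjectivity onto $Y_{p/q}$ from Theorem~\ref{repchange} together with density of $\tr_{p/q}(\Rpos)$, continuity, and compactness, with shift-equivariance being automatic since $\Phi$ is a sliding block code. The only cosmetic difference is that you package the inverse as an explicit block map $\Psi$, whereas the paper runs the identical modular computation as a direct check that $\Phi(x)=\Phi(y)$ forces $x=y$ on all of $\digs_{pq}^\Z$.
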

\begin{proof}We first prove that $\Phi$ is injective on $\digs_{pq}^\Z$. To see this, assume that $x,y\in \digs_{pq}^\Z$ are elements such that $\Phi(x)=\Phi(y)$ and let $i\in\Z$. Then
\begin{flalign*}
&\Phi(x)[i-1]=\Phi(y)[i-1] \\
\implies& q\md_p(x[i])-p\md_q(x[i-1])=q\md_p(y[i])-p\md_q(y[i-1]) \\
\implies& q\md_p(x[i])\equiv q\md_p(y[i]) \pmod p \implies x[i]\equiv y[i] \pmod p
\end{flalign*}
and
\begin{flalign*}
&\Phi(x)[i]=\Phi(y)[i] \\
\implies& q\md_p(x[i+1])-p\md_ q(x[i])=q\md_p(y[i+1])-p\md_q(y[i]) \\
\implies& p\md_q(x[i])\equiv p\md_q(y[i]) \pmod q \implies x[i]\equiv y[i] \pmod q.
\end{flalign*}
Because $x[i],y[i]\in \digs_{pq}$, it follows that $x[i]=y[i]$ for all $i\in\Z$.

Since $\Phi:\digs_{pq}^\Z\to\Phi(\digs_{pq}^\Z)$ is a continuous injective map on a compact metrizable space, it is a homeomorphism. Using the previous theorem we can deduce that
\[\Phi(\trsh_{p/q})=\Phi(\overline{\tr_{p/q}(\Rpos)})=\overline{\Phi(\tr_{p/q}(\Rpos))}=\overline{\varphi(\Rpos)}=Y_{p/q}:\]
because $\Phi$ is a homeomorphism, we can change the order of taking a topological closure and applying $\Phi$. Therefore the restriction map $\Phi:\trsh_{p/q}\to Y_{p/q}$ is continuous and bijective.

Consider now $\Phi$ restricted to $\trsh_{p/q}$. To conclude, we need to show that $\Phi\circ\sigma_{\trsh_{p/q}}=\sigma_{Y_{p/q}}\circ\Phi$. But this follows directly from Definition~\ref{phiDef}, which gives $\Phi$ as a sliding block code.
\end{proof}

A special case of Lemma 1 in~\cite{Dub05} says that $\varphi_{p/q}(\xi)$ is not eventually periodic for $\xi\in\Rpos$. The last two theorems together with Proposition~\ref{mulAper} yield an alternative proof of this fact.

We begin to examine the properties of the language $L(p/q)$ with the aim of proving that $\trsh_{p/q}$ is not sofic or synchronizing. 

\begin{lemma}\label{restr}If $a_1,a_2,b_1,b_2\in \digs_{pq}$, $w\in \digs_{pq}^n$ for some $n$, $a_1\not\equiv a_2\pmod q$ and $b_1\not\equiv b_2\pmod p$, then $\{a_i w b_j\mid i,j\in\{1,2\}\}\not\subseteq L(p/q)$.\end{lemma}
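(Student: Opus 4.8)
The plan is to transport the statement to the companion subshift $Y_{p/q}$ through the conjugacy $\Phi$ (Definition~\ref{phiDef}) and then to argue directly with the base-$p/q$ expansion furnished by Lemma~\ref{compfrac}. Since $\Phi\colon\trsh_{p/q}\to Y_{p/q}$ is a conjugacy realized by a sliding block code, with inverse again a sliding block code, membership in $L(p/q)$ corresponds to membership in $L(Y_{p/q})$. Evaluating $\Phi$ on $a_iwb_j$ shows its image has the shape $A_iMB_j$, where $A_i=q(w[1]\bmod p)-p(a_i\bmod q)$ depends only on $i$, the central block $M$ depends only on $w$, and $B_j=q(b_j\bmod p)-p(w[n]\bmod q)$ depends only on $j$. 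From $a_1\not\equiv a_2\pmod q$ the symbols $A_1,A_2$ are distinct and congruent mod $p$, and from $b_1\not\equiv b_2\pmod p$ the symbols $B_1,B_2$ are distinct and congruent mod $q$. So it suffices to show that the four words $A_iMB_j$ cannot all lie in $L(Y_{p/q})$.

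Next I would record the base-$p/q$ admissibility description coming from Lemma~\ref{compfrac}: a word $z[0]\cdots z[m-1]$ over $\digs_{-q,p}$ belongs to $L(Y_{p/q})$ iff there is some $r_0\in[0,1]$ whose orbit under $r_{t+1}=(pr_t-z[t])/q$ stays in $[0,1]$ for $t=0,\dots,m$. Reading a symbol $s$ from state $r$ is possible exactly when $r$ lies in the slot $[s/p,(s+q)/p]$, an interval of length $q/p<1$. First, if $A_1,A_2$ or $B_1,B_2$ fail to both lie in the range $\digs_{-q,p}$, then one of the four words is already excluded and we are done; so assume they do. Two distinct elements of $\digs_{-q,p}$ congruent mod $p$ necessarily differ by exactly $p$, hence $\{A_1,A_2\}=\{A,A-p\}$ with $A\in\{p-q+1,\dots,p-1\}$, while $B_1,B_2$ differ by some positive multiple of $q$.

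The geometric heart of the argument is then the following. Reading the first symbol maps the admissible $r_0$ onto an interval of $r_1$, and a short computation shows that for $A$ and for $A-p$ these reachable intervals are $[0,\pi]$ and $[\pi,1]$, meeting only at $\pi=(p-A)/q$. On the other hand, for the common tail, the set of $r_1$ from which $MB_j$ is admissible is an interval $G_j$: the intersection of the admissibility interval of $M$ with the increasing affine preimage of the slot of $B_j$. Because the two $B$-slots have interiors that are disjoint (their left endpoints differ by at least $q/p$, their common length), $G_1$ and $G_2$ have disjoint interiors. If all four words were admissible, each $G_j$ would meet both $[0,\pi]$ and $[\pi,1]$, and being an interval it would then contain $\pi$; so $\pi\in G_1\cap G_2$.

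I expect the main obstacle to be ruling out the one degenerate possibility that survives: that $G_1$ and $G_2$ abut exactly at $\pi$, which is the only way $\pi\in G_1\cap G_2$ can occur. This reduces to showing that the image of $\pi$ under the affine tail map is never the shared endpoint $B_{\max}/p$ of the two $B$-slots. Here coprimality is decisive: $\pi$ is attained only from the integer boundary $r_0\in\{0,1\}$, so its orbit under the integer-coefficient recursion stays in $\tfrac{1}{q^{k}}\Z$ (for the relevant $k=|M|+1$), whereas $B_{\max}/p\in\tfrac1p\Z$; as $\gcd(p,q)=1$ these agree only if $p\mid B_{\max}$, which forces $B_{\max}=0$ and hence $B_{\min}=B_{\max}-q=-q\notin\digs_{-q,p}$, a contradiction. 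This settles the degenerate case and completes the proof. Thus the real work lies in this final coprimality/boundary computation, the rest being bookkeeping with the expansion intervals.
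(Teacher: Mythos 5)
Your proof is correct, but it takes a genuinely different route from the paper's. The paper never leaves the arithmetic side: using Theorem~\ref{repchange} and the telescoping identity $\psi_{p^{n+1}/q^{n+1}}(\xi)=q^n\sum_{i=0}^n (p/q)^i\varphi_{p/q}(\xi)[n-i]$, it picks $\xi_1,\xi_2$ whose traces begin with $a_1wb_1$ and $a_2wb_2$ (after relabeling so that $m_q=\md_q(a_2)-\md_q(a_1)>0$ and $m_p=\md_p(b_1)-\md_p(b_2)>0$, which is the only place the four-word hypothesis is used), and computes that $\psi_{p^{n+1}/q^{n+1}}(\xi_1)-\psi_{p^{n+1}/q^{n+1}}(\xi_2)=q^{n+1}m_p+p^{n+1}m_q\geq q^{n+1}+p^{n+1}$, which exceeds the diameter of the range $\digs_{-q^{n+1},p^{n+1}}$ --- a one-step contradiction with no case analysis; in effect it proves the stronger claim that the two words $a_1wb_1$ and $a_2wb_2$ alone are incompatible once the signs are arranged. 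You instead transport everything to $Y_{p/q}$ and run the fractional-part dynamics $r\mapsto(pr-s)/q$ on $[0,1]$: your computations of the reachable sets $[0,\pi]$ and $[\pi,1]$ with $\pi=(p-A)/q$, the disjoint-interior intervals $G_1,G_2$, the convexity argument forcing $\pi\in G_1\cap G_2$, and the coprimality endgame ($\rho(\pi)\in\frac{1}{q^n}\Z$ versus $B_{\max}/p$, forcing $p\mid B_{\max}$, hence $B_{\max}=0$ and $B_{\min}=-q\notin\digs_{-q,p}$) are all sound. The paper's route buys brevity and uniformity; yours buys structural insight into $Y_{p/q}$ as the coding of an interval dynamical system, at the price of a boundary/degenerate case that the arithmetic argument never sees.

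Three caveats, none fatal. First, your argument as written needs $n\geq 1$: the symbols $A_i$ and $B_j$ are built from $w[1]$ and $w[n]$, and for $w=\epsilon$ they collapse into the single symbol $q\md_p(b_j)-p\md_q(a_i)$; that case needs a separate one-line argument (the four values would span at least $p+q$, exceeding the diameter $p+q-2$ of $\digs_{-q,p}$), whereas the paper's proof covers $n=0$ without change. Second, your admissibility criterion should not be stated as an ``iff'': with closed intervals the converse direction can fail when the admissible set degenerates to a point (a closure effect). You only use the necessity direction, which does hold, but it deserves its own line: a word of $L(Y_{p/q})$ lies in an open cylinder meeting $\overline{\varphi_{p/q}(\Rpos)}$, hence is realized in some $\varphi_{p/q}(\xi)$, and the numbers $\fractional((p/q)^t\xi)$ give the required orbit --- this comes from the definition of $\psi_{p/q}$, not really from Lemma~\ref{compfrac}. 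Third, the remark that $\pi$ ``is attained only from $r_0\in\{0,1\}$'' is superfluous: all the coprimality step needs is that $\pi=(p-A)/q\in\frac{1}{q}\Z$, so that the affine tail map sends it into $\frac{1}{q^{n}}\Z$.
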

\begin{proof}Assume to the contrary that $\{a_i w b_j\mid i,j\in\{1,2\}\}\subseteq L(p/q)$. Without loss of generality $m_q=\md_q(a_2)-\md_q(a_1)>0$ and $m_p=\md_p(b_1)-\md_p(b_2)>0$. Let $\xi_1,\xi_2\in\Rpos$ be such that $\tr(\xi_i)[0,n+1]=a_i w b_i$ for $i\in\{1,2\}$. For any $\xi\in\Rpos$ we have
\begin{flalign*}
&\frac{1}{q^{n+1}}\psi_{p^{n+1}/q^{n+1}}(\xi)=\left\lfloor\left(\frac{p}{q}\right)^{n+1} \xi\right\rfloor-\left(\frac{p}{q}\right)^{n+1}\lfloor \xi\rfloor \\
&=\sum_{i=0}^{n}\left(\frac{p}{q}\right)^i\left(\left\lfloor\left(\frac{p}{q}\right)^{n-i+1} \xi\right\rfloor -\left(\frac{p}{q}\right)\left\lfloor\left(\frac{p}{q}\right)^{n-i} \xi\right\rfloor\right) \\
&=\frac{1}{q}\sum_{i=0}^{n}\left(\frac{p}{q}\right)^i\varphi_{p/q}(\xi)[n-i] \\
&\overset{T \ref{repchange}}{=}\frac{1}{q}\sum_{i=0}^{n}\left(\frac{p}{q}\right)^i(q\md_p(\tr(\xi)[n-i+1])-p\md_q(\tr(\xi)[n-i])),
\end{flalign*}
and because $\tr(\xi_1)[1,n]=\tr(\xi_2)[1,n]$, it follows that
\begin{flalign*}
&\psi_{p^{n+1}/q^{n+1}}(\xi_1)-\psi_{p^{n+1}/q^{n+1}}(\xi_2) \\
&=q^n\left(q\md_p(\tr(\xi_1)[n+1])-\left(\frac{p}{q}\right)^n p\md_q(\tr(\xi_1)[0])\right) \\
&-q^n\left(q\md_p(\tr(\xi_2)[n+1])-\left(\frac{p}{q}\right)^n p\md_q(\tr(\xi_2)[0])\right) \\
&=q^n\left(qm_p+\left(\frac{p}{q}\right)^n pm_q\right)=q^{n+1}m_p+p^{n+1}m_q\geq q^{n+1}+p^{n+1}
\end{flalign*}
which contradicts the fact that $\psi_{p^{n+1}/q^{n+1}}(\xi)\in \digs_{-q^{n+1},p^{n+1}}$ for all $\xi\in\Rpos$.
\end{proof}

\begin{lemma}\label{flipword}Let $s,t>1$ be coprime (we do not assume that $s>t$). If $wa\in L(s/t)$ for some $w\in \digs_{st}^+$ and $a\in \digs_{st}$, then $wQ_{s,t}(a)\subseteq L(s/t)$.\end{lemma}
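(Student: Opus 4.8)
The plan is to realize $wa$ as the bottom portion of a single column of a space-time diagram and then to flip that bottom entry by perturbing one cell at the far upper-left corner of its dependency cone, exploiting the partial permutivity of $\fmul_{s/t,st}$ recorded in Lemmas~\ref{f2} and~\ref{f3}. Abbreviate $G=\Mul_{s/t,st}$ and write $n=\abs{w}\geq 1$. Since $\trsh_{s/t}=\tr_{s/t}(\digs_{st}^\Z)$ and $\tr_{s/t}\circ G=\sigma\circ\tr_{s/t}$, any occurrence $\tr_{s/t}(x)[k,k+n]=wa$ can be moved to the origin by replacing $x$ with $G^k(x)\in\digs_{st}^\Z$; so I may assume $wa=\tr_{s/t}(x)[0,n]$, i.e. $G^{j}(x)[0]=w[j+1]$ for $0\le j\le n-1$ and $G^{n}(x)[0]=a$. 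Because $G$ has radius $1$, the value $G^{k}(x)[0]$ depends only on $x[-k,k]$; hence changing the single coordinate $x[-n]$ leaves every $w[j+1]=G^{j}(x)[0]$ with $j\le n-1$ untouched (these depend on $x[-(n-1),n-1]$) and alters only $a=G^{n}(x)[0]$.

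The crux is to control \emph{precisely} how $a$ moves as $x[-n]$ varies. I would prove by induction on $k\ge 1$ the following statement: with all coordinates of $x$ other than $x[-k]$ held fixed, the quantity $G^{k}(x)[0]\bmod s$ is independent of $x[-k]$, while the assignment $x[-k]\mapsto G^{k}(x)[0]\bmod t$ factors through $x[-k]\bmod t$ and is a bijection of $\Z/t\Z$. For $k=1$ this is exactly Lemma~\ref{f3} (constancy mod $s$) together with Lemma~\ref{f2} (bijectivity mod $t$) applied to $G(x)[0]=\fmul_{s/t,st}(x[-1],x[0],x[1])$, where $x[-1]$ is the leftmost argument. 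For the inductive step one writes $G^{k+1}(x)[0]=\fmul_{s/t,st}(G^{k}(x)[-1],G^{k}(x)[0],G^{k}(x)[1])$ and checks, from the radius-$1$ dependency windows $x[i-k,i+k]$ of $G^{k}(x)[i]$, that $x[-(k+1)]$ feeds only into the leftmost argument $G^{k}(x)[-1]$ and into neither of the other two. The induction hypothesis then makes this leftmost argument run bijectively through $\Z/t\Z$ (and stay constant mod $s$) as $x[-(k+1)]$ does, and Lemmas~\ref{f2} and~\ref{f3} transport both properties through the outer application of $\fmul_{s/t,st}$.

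With the claim in hand for $k=n$, I let $x[-n]$ run over a complete residue system modulo $t$, for instance $\{0,1,\dots,t-1\}\subseteq\digs_{st}$. Then $a=G^{n}(x)[0]$ stays in its original class modulo $s$ yet sweeps out all residues modulo $t$; since $\gcd(s,t)=1$, the Chinese Remainder Theorem identifies this sweep with exactly the $t$-element set $Q_{s,t}(a)=\{d\in\digs_{st}\mid d\equiv a\pmod s\}$, which is a complete residue system modulo $t$. For each resulting configuration $\tilde x$ we have $\tr_{s/t}(\tilde x)[0,n]=w\,a'$ with $a'\in Q_{s,t}(a)$ the corresponding value, so $wa'\in L(s/t)$; ranging over all $a'$ yields $wQ_{s,t}(a)\subseteq L(s/t)$.

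The main obstacle is the bookkeeping in the inductive step: one must verify that the perturbed cell $x[-(k+1)]$ influences only the \emph{leftmost} of the three arguments of the outer $\fmul_{s/t,st}$, which is precisely the argument to which Lemmas~\ref{f2} and~\ref{f3} speak. This is a direct consequence of the radius-$1$ light cone, but it is the one place where care is needed. One should also note in passing that Lemmas~\ref{f2} and~\ref{f3} remain valid for $\fmul_{s/t,st}$ for any coprime $s,t>1$, since their proofs use only coprimality of numerator and denominator and not the assumption $s>t$.
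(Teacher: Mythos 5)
Your proof is correct and follows essentially the same route as the paper: the paper likewise perturbs the single coordinate $x[-\abs{w}]$ at the corner of the dependency cone and concludes that the last trace symbol sweeps out $Q_{s,t}(a)$ while $w$ is untouched. The only difference is that the paper does the bookkeeping by ``repeated application of Proposition~\ref{flip}'', whereas you inline that induction and rederive its content directly from Lemmas~\ref{f2} and~\ref{f3} (which is exactly how Proposition~\ref{flip} is proved), so the underlying partial-permutivity argument is identical.
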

\begin{proof}Let $x\in \digs_{st}^\Z$ such that $\tr_{s/t}(x)[0,\abs{wa}-1]=wa$ and for every $d\in \digs_{st}$ let $x_d\in \digs_{st}^\Z$ be such that $x_d(-\abs{w})=d$ and $x_d[i]=x[i]$ for $i\neq -\abs{w}$. Then
\[\{\tr_{s/t}(x_d)[0,\abs{wa}-1]\mid d\in \digs_{st}^\Z\}=wQ_{s,t}(a)\]
by repeated application of Proposition~\ref{flip}.
\end{proof}

\begin{lemma}For any $a\in \digs_{pq}$ it holds that
\begin{flalign*}
\abs{\fmul_{q/p,pq}(0,a,\digs_{pq})}=
\left\{
\begin{array}{l}
2 \text{ when } \md_p(aq)\in\{p-i\mid 1\leq i\leq q-1\},\\
1 \text{ otherwise.}
\end{array}
\right.
\end{flalign*}
Moreover, in the first case, there are $d_a,b_{a,1}, b_{a,2}=b_{a,1}+1\in \digs_{pq}$ such that $b_{a,2}$ is divisible by $p$ and $\fmul_{q/p,pq}(0,a,b_{a,1})=d_a$, $\fmul_{q/p,pq}(0,a,b_{a,2})=d_a+1$.
\end{lemma}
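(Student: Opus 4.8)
The plan is to unfold $\fmul_{q/p,pq}$ into a closed arithmetic formula and then reduce the entire statement to counting how many distinct values a floor function takes on a short window of consecutive integers. First I would use the defining reduction of the fractional rule to the basic multiplication rule, which for the fraction $q/p$ reads $\fmul_{q/p,pq}(0,a,d)=\mul_{q,pq}(\mul_{q,pq}(0,a),\mul_{q,pq}(a,d))$. The one subtlety to keep in mind here is that $\mul_{q,pq}$ multiplies by $q$, so when applying the definition of the basic rule each digit $e\in\digs_{pq}$ must be split as $e=e_1p+e_0$ with $e_0\in\digs_p$ and $e_1\in\digs_q$; that is, the roles of $p$ and $q$ are interchanged compared to $\mul_{p,pq}$. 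Carrying out the three evaluations with this convention gives $\mul_{q,pq}(0,a)=\lfloor a/p\rfloor$ and $\mul_{q,pq}(a,d)=\md_p(a)q+\lfloor d/p\rfloor$, and then, since $\lfloor a/p\rfloor\le q-1<p$ has vanishing high part, the closed form $\fmul_{q/p,pq}(0,a,d)=\lfloor a/p\rfloor q+\lfloor(\md_p(a)q+\lfloor d/p\rfloor)/p\rfloor$.

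Next I would observe that the leading summand $\lfloor a/p\rfloor q$ does not depend on $d$, and that as $d$ runs over $\digs_{pq}$ the quantity $\lfloor d/p\rfloor$ attains every value in $\{0,1,\dots,q-1\}$. Writing $m=\md_p(a)$, the cardinality $\abs{\fmul_{q/p,pq}(0,a,\digs_{pq})}$ therefore equals the number of distinct values of $\lfloor(mq+k)/p\rfloor$ as $k$ ranges over $\{0,\dots,q-1\}$, i.e. the number of values of $x\mapsto\lfloor x/p\rfloor$ on the window of $q$ consecutive integers $\{mq,mq+1,\dots,mq+q-1\}$.

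The counting is then elementary. Since $q<p$ the window has fewer than $p$ elements, so $\lfloor\cdot/p\rfloor$ takes at most two values on it, and it takes exactly two precisely when the window contains a multiple of $p$ in its interior $\{mq+1,\dots,mq+q-1\}$. The first multiple of $p$ exceeding $mq$ is $mq+j$ with $j=p-\md_p(mq)$, and since $m\equiv a\pmod p$ we have $\md_p(mq)=\md_p(aq)$; hence the interior condition $1\le j\le q-1$ rearranges exactly into $\md_p(aq)\in\{p-i\mid1\le i\le q-1\}$. This yields the claimed dichotomy between two values and one value.

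For the \emph{moreover} part, in the two-value case the unique interior multiple of $p$ is $mq+j$ with $j=p-\md_p(aq)$, so the output increments exactly as the value $\lfloor d/p\rfloor$ crosses from $j-1$ to $j$. I would set $b_{a,2}=jp$, which is divisible by $p$ and lies in $\digs_{pq}$ because $1\le j\le q-1$, together with $b_{a,1}=jp-1$. Since $\lfloor b_{a,1}/p\rfloor=j-1$ and $\lfloor b_{a,2}/p\rfloor=j$, substituting into the closed formula gives $\fmul_{q/p,pq}(0,a,b_{a,1})=d_a$ and $\fmul_{q/p,pq}(0,a,b_{a,2})=d_a+1$ with $d_a=\lfloor a/p\rfloor q+\lfloor mq/p\rfloor$. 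The only real obstacle in the whole argument is bookkeeping the swapped $p,q$ digit decomposition for $\mul_{q,pq}$; once the closed formula is in hand, everything reduces to the single observation that a length-$q$ window of integers meets two residue classes of $p$ exactly when it straddles a multiple of $p$.
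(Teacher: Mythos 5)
Your proof is correct and takes essentially the same route as the paper: unfold $\fmul_{q/p,pq}$ through $\mul_{q,pq}$ (with the swapped $p$,$q$ digit decomposition) into the closed form $\lfloor a/p\rfloor q+\lfloor(\md_p(a)q+\lfloor d/p\rfloor)/p\rfloor$, and then count the values of the floor term as $\lfloor d/p\rfloor$ ranges over $\digs_q$. Your window argument just spells out the counting step that the paper states tersely, and your choices $b_{a,1}=jp-1$, $b_{a,2}=jp$ coincide with the paper's $b_{a,1}=cp+(p-1)$, $b_{a,2}=(c+1)p$ under $j=c+1$.
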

\begin{proof}
For $a,b\in \digs_{pq}$ write $a=a_1p+a_0$ and $b=b_1p+b_0$. Then
\[\fmul_{q/p,pq}(0,a,b)=\mul_{q,pq}(\mul_{q,pq}(0,a),\mul_{q,pq}(a,b))=\mul_{q,pq}(a_1,a_0 q+b_1)=\md_p(a_1)q+d,\]
where $d=\left\lfloor\frac{a_0q+b_1}{p}\right\rfloor$. If $b$ ranges over $\digs_{pq}$, then $b_1$ ranges over $\digs_q$ and $d$ can attain two distinct values if and only if $\md_p(aq)=\md_p(a_0q)\in\{p-i\mid 1\leq i\leq q-1\}$.

If $d$ can attain two distinct values, then there is a unique $c\in \digs_q\setminus\{q-1\}$ such that $\left\lfloor\frac{a_0q+c}{p}\right\rfloor<\left\lfloor\frac{a_0q+(c+1)}{p}\right\rfloor$. Then we can choose $b_{a,1}=cp+(p-1)$ and $b_{a,2}=(c+1)p$.
\end{proof}

\begin{lemma}\label{contexts}For any $a\in \digs_{pq}$ there is a $d\in\digs_{pq}$ such that
\begin{flalign*}
\pre_{p/q}(a)=
\left\{
\begin{array}{l}
Q_{q,p}(d)\cup Q_{q,p}(d+1) \text{ if } \md_p(aq)\in\{p-i\mid 1\leq i\leq q-1\},\\
Q_{q,p}(d) \text{ otherwise.}
\end{array}
\right.
\end{flalign*}
In particular, $\abs{\pre_{p/q}(a)}$ is equal to $2p$ or $p$ respectively.
\end{lemma}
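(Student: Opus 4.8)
The plan is to reduce the computation of $\pre_{p/q}(a)$ to the image of the \emph{inverse} local rule $\fmul_{q/p,pq}$ with middle symbol $a$ held fixed, and then to quote the preceding lemma (which already counts this image for first argument $0$).

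First I would establish the identity
\[\pre_{p/q}(a)=\{\fmul_{q/p,pq}(c,a,e)\mid c,e\in\digs_{pq}\}.\]
Indeed, $b\in\pre_{p/q}(a)$ means $ba\in L(p/q)$, i.e. there is $x\in\digs_{pq}^\Z$ and $t\in\Z$ with $\Mul_{p/q,pq}^t(x)[0]=b$ and $\Mul_{p/q,pq}^{t+1}(x)[0]=a$. Now $\Mul_{p/q,pq}$ is reversible (it is a composition of reversible maps) and its inverse multiplies by $q/p$, hence equals $\Mul_{q/p,pq}$ by the uniqueness of multiplication automata. Setting $z=\Mul_{p/q,pq}^{t+1}(x)$ then gives $z[0]=a$ and $b=\Mul_{q/p,pq}(z)[0]=\fmul_{q/p,pq}(z[-1],a,z[1])$. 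Conversely, any $z$ with $z[0]=a$ produces such a pair via $y=\Mul_{q/p,pq}(z)$; and because the domain is the full shift, the neighbours $z[-1]$ and $z[1]$ may be prescribed arbitrarily, which yields the displayed identity.

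Next I would analyse this image with the middle symbol $a$ fixed. The proofs of Lemmas~\ref{f2} and~\ref{f3} never use $p>q$, so they apply verbatim with $p$ and $q$ interchanged; thus for $\fmul_{q/p,pq}$, varying the first argument leaves the value constant modulo $q$ (the $q/p$-version of Lemma~\ref{f3}) while making it range over a complete residue system modulo $p$ (the $q/p$-version of Lemma~\ref{f2}). Hence for each fixed $e$, as $c$ ranges over $\digs_{pq}$ the values $\fmul_{q/p,pq}(c,a,e)$ all lie in the single class $Q_{q,p}(\fmul_{q/p,pq}(0,a,e))$ and realise every residue modulo $p$; since $Q_{q,p}(d)$ is a complete residue system modulo $p$ with exactly $p$ elements, this forces $\{\fmul_{q/p,pq}(c,a,e)\mid c\in\digs_{pq}\}=Q_{q,p}(\fmul_{q/p,pq}(0,a,e))$. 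Taking the union over $e$ gives
\[\pre_{p/q}(a)=\bigcup_{d\in\fmul_{q/p,pq}(0,a,\digs_{pq})}Q_{q,p}(d).\]

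Finally I would invoke the preceding lemma, which computes $\fmul_{q/p,pq}(0,a,\digs_{pq})$: it is a singleton $\{d\}$ when $\md_p(aq)\notin\{p-i\mid 1\le i\le q-1\}$, giving $\pre_{p/q}(a)=Q_{q,p}(d)$, and it equals $\{d_a,d_a+1\}$ otherwise, giving $\pre_{p/q}(a)=Q_{q,p}(d_a)\cup Q_{q,p}(d_a+1)$. For the cardinalities I would use $\abs{Q_{q,p}(d)}=p$ together with the fact that $d_a$ and $d_a+1$ are incongruent modulo $q$, so the two classes are disjoint and their union has $2p$ elements. The step I expect to require the most care is the first one: cleanly justifying that predecessors in the time-direction trace correspond exactly to the image of the inverse local rule, which rests on the reversibility of $\Mul_{p/q,pq}$, the identification $\Mul_{p/q,pq}^{-1}=\Mul_{q/p,pq}$, and the extendability of any prescribed central window to a genuine configuration on the full shift.
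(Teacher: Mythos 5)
Your proof is correct and follows essentially the same route as the paper's: identify $\pre_{p/q}(a)$ with the image $\fmul_{q/p,pq}(\digs_{pq},a,\digs_{pq})$ of the inverse local rule, compute $\fmul_{q/p,pq}(0,a,\digs_{pq})$ via the preceding lemma, and expand each value to a full class $Q_{q,p}(\cdot)$ using the partial permutivity of $\fmul_{q/p,pq}$ (the paper cites Proposition~\ref{flip} in its $q/p$ form, which you re-derive from the $q/p$-versions of Lemmas~\ref{f2} and~\ref{f3}). You are in fact more explicit than the paper on two points it leaves implicit: the justification of the predecessor identity via reversibility and $\Mul_{p/q,pq}^{-1}=\Mul_{q/p,pq}$, and the disjointness of $Q_{q,p}(d)$ and $Q_{q,p}(d+1)$ giving cardinality $2p$.
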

\begin{proof}
If $\md_p(aq)\in\{p-i\mid 1\leq i\leq q-1\}$, then by the previous lemma there is a partition $B_1\cup B_2=\digs_{pq}$ such that $\fmul_{q/p,pq}(0,a,B_1)=d$ and $\fmul_{q/p,pq}(0,a,B_2)=d+1$ for some $d\in \digs_{pq}$. Then by applying Proposition \ref{flip} it follows that $\fmul_{q/p,pq}(\digs_{pq},a,B_1)=Q_{q,p}(d)$ and $\fmul_{q/p,pq}(\digs_{pq},a,B_2)=Q_{q,p}(d+1)$, so we have $\pre_{p/q}(a)=Q_{q,p}(d)\cup Q_{q,p}(d+1)$. This is a set of cardinality $2p$. The proof for $\md_p(aq)\notin\{p-i\mid 1\leq i\leq q-1\}$ is similar.
\end{proof}

\begin{lemma}\label{wordcontexts}For any $w\in L(p/q)\setminus\{\epsilon\}$ there is a $d\in\digs_{pq}$ such that either $\pre_{p/q}(w)=Q_{q,p}(d)\cup Q_{q,p}(d+1)$ or $\pre_{p/q}(w)=Q_{q,p}(d)$. In particular, $\abs{\pre_{p/q}(w)}$ is equal to $2p$ or $p$.
\end{lemma}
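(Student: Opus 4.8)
The plan is to combine three observations: that $\pre_{p/q}(w)$ is contained in the predecessor set of its first letter, that it is saturated under the equivalence $a\sim a'\iff a\equiv a'\pmod q$, and that it is nonempty.

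First I would record the containment $\pre_{p/q}(w)\subseteq\pre_{p/q}(w[1])$, which is immediate from factoriality of $L(p/q)$: if $aw\in L(p/q)$ then its prefix $aw[1]\in L(p/q)$, so every predecessor of $w$ is a predecessor of $w[1]$. By Lemma~\ref{contexts} the set $\pre_{p/q}(w[1])$ is either a single class $Q_{q,p}(d)$ or a union $Q_{q,p}(d)\cup Q_{q,p}(d+1)$ of two classes with consecutive indices, for some $d\in\digs_{pq}$. In either case $\pre_{p/q}(w)$ is confined to at most two consecutive residue classes modulo $q$.

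The key step is to show that $\pre_{p/q}(w)$ is itself a union of full classes $Q_{q,p}(\cdot)$, i.e. that $a\in\pre_{p/q}(w)$ forces $Q_{q,p}(a)\subseteq\pre_{p/q}(w)$. For this I would pass to the inverse automaton. Since $\Mul_{q/p,pq}=\Mul_{p/q,pq}^{-1}$, one has $\tr_{q/p}(x)[t]=\tr_{p/q}(x)[-t]$ for every $x$ and $t$, so $L(q/p)$ consists exactly of the reversals of the words of $L(p/q)$; in particular $aw\in L(p/q)$ iff $w^{R}a\in L(q/p)$, where $w^{R}$ denotes the reversal of $w$. Now Lemma~\ref{flipword}, applied to the coprime pair $(s,t)=(q,p)$ (which is permitted since that lemma does not assume $s>t$) and to the nonempty word $w^{R}$ with trailing letter $a$, yields $w^{R}Q_{q,p}(a)\subseteq L(q/p)$ as soon as $w^{R}a\in L(q/p)$. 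Reversing back, this says precisely that $a\in\pre_{p/q}(w)$ implies $Q_{q,p}(a)\subseteq\pre_{p/q}(w)$, so $\pre_{p/q}(w)$ is a union of classes $Q_{q,p}(\cdot)$.

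Finally $\pre_{p/q}(w)\neq\emptyset$, because $w\in L(p/q)$ extends to a bi-infinite point of $\trsh_{p/q}$ and hence has at least one predecessor. Putting the pieces together, $\pre_{p/q}(w)$ is a nonempty union of $Q_{q,p}$-classes contained in $Q_{q,p}(d)\cup Q_{q,p}(d+1)$; thus it equals $Q_{q,p}(d)$, $Q_{q,p}(d+1)$, or $Q_{q,p}(d)\cup Q_{q,p}(d+1)$, which are respectively the single-class form (size $p$) and the two-consecutive-class form (size $2p$) asserted by the lemma. The main obstacle is the saturation step, and more precisely the bookkeeping needed to legitimately invoke Lemma~\ref{flipword}: one must justify the passage to the reversed language $L(q/p)$ through the inverse CA, and keep track that the ``flip'' available there is with respect to the modulus $p$, which is exactly what produces the classes $Q_{q,p}(\cdot)$ rather than $Q_{p,q}(\cdot)$.
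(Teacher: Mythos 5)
Your proof is correct and takes essentially the same route as the paper's: containment $\pre_{p/q}(w)\subseteq\pre_{p/q}(w[1])$ combined with Lemma~\ref{contexts}, saturation of $\pre_{p/q}(w)$ into full $Q_{q,p}$-classes via Lemma~\ref{flipword} applied to the coprime pair $(q,p)$, and nonemptiness. If anything, your write-up is more explicit than the paper's one-line invocation of Lemma~\ref{flipword}, since you justify the time-reversal identification of $L(q/p)$ with the reversals of $L(p/q)$ through the inverse automaton, a step the paper leaves implicit.
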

\begin{proof}
Consider an arbitrary word $w=av\in L(p/q)$, where $v\in \digs_{pq}^*$ and $a\in \digs_{pq}$. Evidently $\pre_{p/q}(av)\neq\emptyset$ and by the previous lemma $\pre_{p/q}(av)\subseteq\pre_{p/q}(a)\subseteq Q_{q,p}(d)\cup Q_{q,p}(d+1)$ for some $d\in\digs_{pq}$. Then from Lemma \ref{flipword} it follows that $\pre_{p/q}(av)=\bigcup_{i\in\ind{I}}Q_{q,p}(d+i)$ for some nonempty set $\ind{I}\subseteq\{0,1\}$. \end{proof}

Based on this lemma we define two sets of words for every $n\in\Npos$:

\begin{flalign*}
&W_{1,n}=\left\{w\in L(p/q)\cap \digs_{pq}^n\mid \abs{\pre_{p/q}(w)}=p\right\} \\
&W_{2,n}=\left\{w\in L(p/q)\cap \digs_{pq}^n\mid \abs{\pre_{p/q}(w)}=2p\right\}.
\end{flalign*}
These form a partition $L(p/q)\cap \digs_{pq}^n=W_{1,n}\cup W_{2,n}$. In the next two lemmas we show how to find all elements of $W_{2,n}$ in the traces of suitable configurations.

\begin{lemma}Let $s\in Q$ and $e_{s,0}$ be as in Lemma \ref{tailInt}. Then we have $\tr_{p/q}(x\glue_0 e_{s,0})[1,n]\in W_{2,n}$ for every $x\in \digs_{pq}^\Z$ and $n\in\Npos$.\end{lemma}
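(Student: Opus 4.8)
The goal is to show that the word $w=\tr_{p/q}(x\glue_0 e_{s,0})[1,n]$ has exactly $2p$ predecessors in $\trsh_{p/q}$, i.e. lies in $W_{2,n}$. By Lemma~\ref{wordcontexts} we already know $\abs{\pre_{p/q}(w)}\in\{p,2p\}$, so it suffices to rule out the value $p$ by exhibiting two predecessors of $w$ that are incongruent modulo $p$. The natural candidates come from the two configurations $x\glue_0 e_{s,0}$ and $x\glue_0 e_{s-1,0}$ appearing in Corollary~\ref{tail}: the idea is that these two configurations, which differ only in their tail to the right of the origin, produce the same word $w$ along the trace column $[1,n]$ but are distinguished by what sits \emph{above} them, giving two predecessor symbols that differ modulo $p$.

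The plan is as follows. First I would apply Corollary~\ref{tail} to see that applying $\Mul_{p/q,pq}$ to $x\glue_0 e_{s,0}$ and to $x\glue_0 e_{s-1,0}$ yields configurations of the same shape, namely $x'\glue_1 e_{s',1}$ and $x'\glue_1 e_{s'-1,1}$ for a common $x'$ and some $s'\in Q$. Iterating, the space-time columns at positions $1,\dots,n$ agree for the two starting configurations for as long as the $e$-tails stay to the right of position $n$; since the tail boundary moves one step right at each time step (by the corollary's shift from $j$ to $j+1$), during the first $n$ time steps the columns $[1,n]$ see only the $x'$-parts and the divisible-by-$p$ vs. $pq-1$ tail symbols, which are identical in the relevant region. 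Thus both configurations have $\tr_{p/q}(\cdot)[1,n]=w$, so both feed into computing $\pre_{p/q}(w)$.

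The key step is then to locate, for each of the two configurations, a predecessor symbol for $w$, i.e. a valid choice of $\tr_{p/q}(\cdot)[0]$, and to show these two symbols are incongruent modulo $p$. Here I would use the explicit tail bookkeeping from Lemma~\ref{tailInt} together with Lemma~\ref{g3}/Lemma~\ref{f3}: the symbol at the column above $w$ is governed modulo $p$ by the digit sitting at position $0$, which is $s$ for $e_{s,0}$ and $s-1$ for $e_{s-1,0}$. Since $s\in Q=\{np\mid 1\le n<q\}$ is divisible by $p$ while $s-1$ is not, the digits $s$ and $s-1$ are incongruent modulo $p$, and by the mod-$p$ invariance results this incongruence is propagated to give two predecessors of $w$ in distinct classes $Q_{q,p}(d)$ and $Q_{q,p}(d+1)$. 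This forces $\abs{\pre_{p/q}(w)}=2p$, i.e. $w\in W_{2,n}$.

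The main obstacle I anticipate is the careful verification that the two starting configurations genuinely produce the \emph{same} length-$n$ trace word over the first $n$ time steps, so that we are comparing predecessors of a single word rather than of two different words. This requires tracking, via repeated application of Corollary~\ref{tail}, that the discrepancy between the $e_{s,j}$ and $e_{s-1,j}$ tails remains confined to positions $\ge j$ (which grows from $0$ up to at most $n$ over the $n$ steps) and hence never contaminates the observed column $[1,n]$ within the time window that determines membership in $W_{2,n}$; once this alignment is established, the mod-$p$ distinction of the predecessor symbols is a direct consequence of the already-proved invariance lemmas.
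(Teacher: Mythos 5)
Your overall structure matches the paper's: you iterate Corollary~\ref{tail} to see that $x\glue_0 e_{s,0}$ and $x\glue_0 e_{s-1,0}$ produce the same trace word $w$ on positions $[1,n]$ (the discrepancy between the two tails sits at positions $\geq t$ at time $t$, hence never reaches column $0$ for $t\geq 1$), so that $sw$ and $(s-1)w$ both lie in $L(p/q)$ and the two exhibited predecessors of $w$ are simply the time-zero symbols $s$ and $s-1$. Note that no invariance lemma is needed to identify these predecessors: they are read off directly as the digits at position $0$ of the two initial configurations, so your appeal to Lemma~\ref{g3}/Lemma~\ref{f3} to ``propagate'' anything is superfluous.

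There is, however, a genuine error in your counting step. The predecessor sets described by Lemma~\ref{wordcontexts} are unions of the classes $Q_{q,p}(d)=\{e\in\digs_{pq}\mid e\equiv d\pmod q\}$, i.e.\ congruence classes \emph{modulo $q$}, each of cardinality $p$. Hence to rule out $\abs{\pre_{p/q}(w)}=p$ you must exhibit two predecessors that are incongruent modulo $q$, not modulo $p$ as you claim. Incongruence modulo $p$ proves nothing: since $\gcd(p,q)=1$, a single class $Q_{q,p}(d)$ already contains representatives of every residue class modulo $p$ (for instance $d$ and $d+q$ are incongruent mod $p$ yet lie in the same class $Q_{q,p}(d)$), so two predecessors incongruent mod $p$ are perfectly compatible with $\pre_{p/q}(w)$ being a single class of size $p$. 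Your conclusion is saved only by a fact you never invoke: $s$ and $s-1$ differ by $1$, hence they are also incongruent modulo $q$, and it is \emph{this} incongruence that, combined with Lemma~\ref{wordcontexts} (or, as in the paper, directly with Lemma~\ref{flipword}, which yields $Q_{q,p}(s)\cup Q_{q,p}(s-1)\subseteq\pre_{p/q}(w)$), forces $\abs{\pre_{p/q}(w)}=2p$. The divisibility of $s$ by $p$, which you emphasize, plays no role in this lemma; it matters elsewhere in the paper but not here.
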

\begin{proof}Let $w=\tr_{p/q}(x\glue_0 e_{s,0})[1,n]$. By Corollary \ref{tail}
\[\tr_{p/q}(x\glue_0 e_{s,0})[1,n]=\tr_{p/q}(x\glue_0 e_{s-1,0})[1,n],\]
so we have $sw,(s-1)w\in L(p/q)$. By Lemma \ref{flipword} $\pre_{p/q}(w)$ contains at least $2p$ words, so $w\in W_{2,n}$.\end{proof}

\begin{lemma}\label{construct}Let $Q=\{np\mid 1\leq n<q\}$ and fix $n\in\Npos$. For every $s\in Q$ the set
\[W_s=\{\tr_{p/q}(x)[1,n]\mid x\in \digs_{pq}^\Z, x[0]=s, x[i]=0\text{ for } i>0\}\subseteq W_{2,n}\]
contains $q^n$ elements, $W_{2,n}=\bigcup_{s\in Q}W_s$ and $\abs{W_{2,n}}=q^n(q-1)$.\end{lemma}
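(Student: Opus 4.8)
The plan is to prove the three assertions in turn, taking the inclusion $W_s\subseteq W_{2,n}$ (the preceding lemma) for granted throughout. Fix $s\in Q$ and abbreviate $y=x\glue_0 e_{s,0}$, so that $W_s=\{\tr_{p/q}(y)[1,n]\mid x\in\digs_{pq}^\Z\}$; since $\tr_{p/q}(y)[t]=\Mul_{p/q,pq}^t(y)[0]$ depends only on $y[-t,t]$, while $y[0]=s$ and $y[j]=0$ for $j>0$, each word of $W_s$ is determined by the free digits $y[-1],\dots,y[-n]$. The engine of the whole proof is the residue map $r\colon W_s\to\digs_q^n$, $r(w)=(\md_q(w[1]),\dots,\md_q(w[n]))$, which I claim is a bijection; this immediately gives $\lvert W_s\rvert=q^n$.

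For injectivity of $r$ I would exploit the frozen right half of the space--time diagram. Writing $V_t^{(j)}=\Mul_{p/q,pq}^t(y)[j]$ for $j\ge 0$, one has $V_0^{(0)}=s$, $V_0^{(j)}=0$ for $j\ge1$, and for $j\ge 1$ the recursion $V_t^{(j)}=\fmul_{p/q,pq}(V_{t-1}^{(j-1)},V_{t-1}^{(j)},V_{t-1}^{(j+1)})$ involves only superscripts $\ge0$. Hence, by induction on $t$, every $V_t^{(j)}$ with $j\ge0$ is determined by $s$ together with the column-$0$ entries $V_\tau^{(0)}=w[\tau]$. By Lemma \ref{f3}, $\md_p(w[t+1])=\md_p(V_{t+1}^{(0)})$ is determined by $V_t^{(0)}=w[t]$ and $V_t^{(1)}$, and $\md_p(w[1])$ is determined by $s$; so $\md_p(w[t])$ is determined by $s$ and $w[1],\dots,w[t]$. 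An easy induction, using the Chinese Remainder Theorem inside $\digs_{pq}$ at each step, then recovers $w$ from $r(w)$ and $s$, so $\lvert W_s\rvert\le q^n$. For surjectivity of $r$ I would build the free digits greedily from the outside in: by repeated use of Lemma \ref{f2}, varying $y[-t]$ over its residues modulo $q$ (with the deeper digits fixed) makes $\Mul_{p/q,pq}^{t-1}(y)[-1]$ range over a full residue system modulo $q$ while leaving $w[1],\dots,w[t-1]$ untouched, whence $w[t]=\fmul_{p/q,pq}(\Mul_{p/q,pq}^{t-1}(y)[-1],w[t-1],\Mul_{p/q,pq}^{t-1}(y)[1])$ ranges over a whole class $Q_{p,q}(\cdot)$, i.e. over all residues modulo $q$. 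Thus every tuple of $\digs_q^n$ is attained and $r$ is a bijection.

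Next I would prove that the sets $W_s$, $s\in Q$, are pairwise disjoint. If $w\in W_s$ then the preceding lemma (via $e_{s,0}$ and $e_{s-1,0}$) yields $s\,w,(s-1)w\in L(p/q)$, so $s,s-1\in\pre_{p/q}(w)$; since $w\in W_{2,n}$, Lemma \ref{wordcontexts} forces $\pre_{p/q}(w)=Q_{q,p}(\md_q(s-1))\cup Q_{q,p}(\md_q(s))$, the union of the two consecutive residue classes $\{c-1,c\}$ modulo $q$ with $c=\md_q(s)\ne0$ (as $\gcd(p,q)=1$ and $1\le s/p\le q-1$). Two such consecutive pairs coincide only if their tops agree, so $w\in W_{s'}$ would give $\md_q(s)=\md_q(s')$; since $s\equiv s'\equiv0\pmod p$, the Chinese Remainder Theorem gives $s=s'$. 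Together with the bijection $r$ this already yields $\bigl\lvert\bigcup_{s\in Q}W_s\bigr\rvert=(q-1)q^n$.

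It remains to prove $W_{2,n}\subseteq\bigcup_{s\in Q}W_s$, which I expect to be the main obstacle. Given $w\in W_{2,n}$, Lemma \ref{wordcontexts} gives $\pre_{p/q}(w)=Q_{q,p}(d)\cup Q_{q,p}(d+1)$. First I would rule out the wrap-around pair $\{q-1,0\}$: if $a\in\pre_{p/q}(w)$ is realized as $\tr_{p/q}(\xi)[t]$ with $\tr_{p/q}(\xi)[t+1]=w[1]$, then Theorem \ref{repchange} forces $q\md_p(w[1])-p\md_q(a)=\varphi_{p/q}(\xi)[t]\in\digs_{-q,p}$, and a short computation shows the two constraints arising from $\md_q(a)\in\{0,q-1\}$ are incompatible when $q\ge3$ (the case $q=2$ is trivial, since then $Q=\{p\}$). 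Hence $d+1\not\equiv0\pmod q$, and the Chinese Remainder Theorem produces a unique $s\in Q$ with $s\equiv d+1\pmod q$, for which $s,s-1\in\pre_{p/q}(w)$. The remaining and delicate step is to realize $w$ by an \emph{integer} $N\equiv s\pmod{pq}$, for then $w=\tr_{p/q}(N)[1,n]\in W_s$. Here I would work on the real line: choose $\eta\in\Rpos$ with $\tr_{p/q}(\eta)[0,n]=(s-1)w$, and note that every real realizing $w$ has integer part in the residues $\{d,d+1\}$ modulo $q$, so a transition from class $d$ to class $d+1$ must occur at an integer $N\equiv s\pmod{pq}$; because $q^t\nmid N$ gives $(p/q)^tN\notin\Z$ for $1\le t\le n$, the entries $\tr_{p/q}(\cdot)[t]$ with $t\ge1$ are continuous at such $N$, so crossing it leaves $\tr_{p/q}[1,n]$ equal to $w$ while turning the units digit from $s-1$ into $s$. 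Making this crossing argument airtight — controlling the intervening jumps of $\tr_{p/q}(\eta)[t]$, $t\ge1$, between $\eta$ and $N$ — is the crux; once established, combining $\bigcup_{s\in Q}W_s\subseteq W_{2,n}$ with this reverse inclusion and the cardinality count of the third paragraph yields both $W_{2,n}=\bigcup_{s\in Q}W_s$ and $\lvert W_{2,n}\rvert=(q-1)q^n$.
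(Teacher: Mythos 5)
Your first two paragraphs are sound and, modulo packaging, coincide with the paper's own argument: your bijection $r\colon W_s\to\digs_q^n$ (Lemma~\ref{f3} plus the frozen right half-plane for injectivity, Lemma~\ref{f2} for surjectivity) is an unpacking of the paper's ``repeated application of Proposition~\ref{flip}'', and your disjointness argument via predecessor sets is a correct variant of the paper's computation that the first letters of words in $W_s$ and $W_{s'}$ differ modulo $p$. The genuine gap is in the third part, exactly where you write ``the crux'' (your exclusion of the wrap-around pair $\{q-1,0\}$ is fine, but only preliminary): the claim that a transition ``must occur at an integer $N\equiv s\pmod{pq}$'' is not merely unproven, it is false as a statement about any particular crossing, so no local continuity argument can repair it. Put $A_w=\{\xi\in\Rpos\mid\tr_{p/q}(\xi)[1,n]=w\}$. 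An integer $N$ interior to a component of $A_w$ is constrained only by $q\nmid N$ (else the entry $t=1$ jumps at $N$) and by $N\equiv d+1\pmod q$; its residue modulo $p$ is unconstrained, because $\pre_{p/q}(w)=Q_{q,p}(d)\cup Q_{q,p}(d+1)$ contains every residue modulo $p$. Moreover, every integer $N$ with $q\nmid N$ is such an interior crossing point for the word $w'=\tr_{p/q}(N)[1,n]$: the entries $\tr_{p/q}(\cdot)[t]$ with $1\leq t\leq n$ jump only on the discrete set $\bigcup_{t=1}^{n}(q/p)^{t}\Z$, which misses a neighborhood of $N$, so $w'\in W_{2,n}$ with the witnessing crossing at $N$ itself. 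Concretely, for $p/q=3/2$ the word $w=\tr_{3/2}(1)[1,4]=1235$ lies in $W_{2,4}$ because of the crossing at $N=1$, which is not divisible by $3$, and the least $M\equiv 3\pmod 6$ with $\tr_{3/2}(M)[1,4]=w$ is $M=33$. No control of the jumps between your $\eta$ and the next integer can produce $33$ from $1$: the statement you need (\emph{some} crossing, somewhere, has units digit in $Q$) is global, and given your first two paragraphs it is equivalent to the lemma you are trying to prove.

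The missing idea is Lemma~\ref{restr}, which you never invoke and which lets the paper replace realization by counting. Since your first two paragraphs already give $\bigcup_{s\in Q}W_s\subseteq W_{2,n}$ and $\abs{\bigcup_{s\in Q}W_s}=q^n(q-1)$, it suffices to prove $\abs{W_{2,n}}=q^n(q-1)$, and the paper does this by induction on $n$: the base case $n=1$ is Lemma~\ref{contexts}; for the step, if $\abs{W_{2,n+1}}>q\abs{W_{2,n}}$, then (every word of $W_{2,n+1}$ having its length-$n$ prefix in $W_{2,n}$) the pigeonhole principle yields $w\in W_{2,n}$ and more than $q$ letters $a_i$ with $wa_i\in W_{2,n+1}$, two of which satisfy $a_1\not\equiv a_2\pmod p$; since $\pre_{p/q}(wa_i)=\pre_{p/q}(w)=Q_{q,p}(d)\cup Q_{q,p}(d+1)$, choosing $b_1\equiv d$ and $b_2\equiv d+1\pmod q$ gives $\{b_jwa_i\mid i,j\in\{1,2\}\}\subseteq L(p/q)$ with $b_1\not\equiv b_2\pmod q$ and $a_1\not\equiv a_2\pmod p$, contradicting Lemma~\ref{restr}. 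If you replace your entire crossing argument by this counting step, the rest of your proposal goes through.
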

\begin{proof}Denote $W=\bigcup_{s\in Q}W_s$. We begin by showing that $W\subseteq W_{2,n}$ and that $\abs{W}=q^n(q-1)$. First, $W_s\subseteq W_{2,n}$ follows from the previous lemma, and by repeated application of Proposition \ref{flip} it follows that $\abs{W_s}=q^n$. To prove that $\abs{W}=q^n(q-1)$ it is enough to show that $W_s\cap W_{s'}=\emptyset$ for distinct $s,s'\in Q$. This in turn follows by showing that $\fmul_{p/q,pq}(a,s,0)\neq \fmul_{p/q,pq}(b,s',0)$ for all $a,b\in \digs_{pq}$. Therefore let $a=a_1q+a_0$, $b=b_1q+b_0$, $s=s_1q+s_0$ and $s'=s'_1q+s'_0$. Let $d_1,d'_1\in \digs_p$ and $d_0, d'_0\in \digs_q$ be such that $s_0p=d_1q+d_0$ and $s'_0p=d'_1q+d'_0$. Since $s,s'\in Q$, we have $s\not\equiv s'\pmod q$ so the values $s_0,s'_0\in \digs_q$ are distinct. Then $\abs{s_0p-s'_0p}\geq p>q$, so $d_1\neq d'_1$. We compute
\begin{flalign*}
&\fmul_{p/q,pq}(a,s,0)=\mul_{p,pq}(\mul_{p,pq}(a,s),\mul_{p,pq}(s,0))=\mul_{p,pq}(a_0p+s_1,s_0p) \\
&=\md_q(a_0p+s_1)p+d_1\not\equiv\md_q(b_0p+s'_1)p+d'_1=\fmul_{p/q,pq}(b,s',0)\pmod p .
\end{flalign*}

To prove the inclusion $W_{2,n}\subseteq W$ it is now sufficient to show that $\abs{W_{2,n}}=q^n(q-1)$. The proof is by induction. The case $n=1$ follows from Lemma \ref{contexts}, so let us assume that the claim holds for some $n\in\Npos$. By the previous paragraph $\abs{W_{2,n+1}}\geq q^{n+1}(q-1)$, so let us assume contrary to our claim that $\abs{W_{2,n+1}}>q^{n+1}(q-1)$. Every element of $W_{2,n+1}$ is of the form $wa$ where $w\in W_{2,n}$ and $a\in \digs_{pq}$, so by pigeonhole principle there exist $w\in W_{2,n}$ and letters $a_1,a_2,\dots,a_k\in \digs_{pq}$ with $k>q$ such that $wa_i\in W_{2,n+1}$ for all $1\leq i\leq k$. Without loss of generality $a_1\not\equiv a_2\pmod p$ and $\pre_{p/q}(wa_1)=\pre_{p/q}(w)=\pre_{p/q}(wa_2)$, which contradicts Lemma \ref{restr}.\end{proof}

This characterization of the set $W_{2,n}$ will be of use in proving that $\trsh_{p/q}$ is not sofic or synchronizing. As a byproduct we found the cardinality of $W_{2,n}$, which allows us to compute the complexity function of $\trsh_{p/q}$.

\begin{theorem}\label{combi}$P_{\trsh_{p/q}}(n)=pq(p^{n-1}-q^{n-1})\frac{q-1}{p-q}+p^n q$ for every $n\in\Npos$.\end{theorem}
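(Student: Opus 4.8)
The plan is to convert the problem into a first-order linear recurrence for the complexity function and then solve it, using the cardinality $\abs{W_{2,n}}=q^n(q-1)$ already established in Lemma~\ref{construct}. The conceptual heart of the argument is the counting identity
\[
P_{\trsh_{p/q}}(n+1)=\sum_{w\in L^n(\trsh_{p/q})}\abs{\pre_{p/q}(w)},
\]
which holds for any subshift: every word of length $n+1$ in $L(p/q)$ factors uniquely as $aw$, where $a\in\digs_{pq}$ is its first letter and $w$ is its length-$n$ suffix (necessarily in $L^n(\trsh_{p/q})$ by factoriality), and such a concatenation $aw$ lies in $L(p/q)$ exactly when $a\in\pre_{p/q}(w)$. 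Thus $L^{n+1}(\trsh_{p/q})$ is in bijection with the pairs $(w,a)$ satisfying $w\in L^n(\trsh_{p/q})$ and $a\in\pre_{p/q}(w)$.

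I would then split the sum according to the partition $L(p/q)\cap\digs_{pq}^n=W_{1,n}\cup W_{2,n}$. By Lemma~\ref{wordcontexts} and the very definitions of $W_{1,n}$ and $W_{2,n}$, each summand equals $p$ when $w\in W_{1,n}$ and $2p$ when $w\in W_{2,n}$, so
\[
P_{\trsh_{p/q}}(n+1)=p\abs{W_{1,n}}+2p\abs{W_{2,n}}=p\,P_{\trsh_{p/q}}(n)+p\abs{W_{2,n}}.
\]
Substituting $\abs{W_{2,n}}=q^n(q-1)$ from Lemma~\ref{construct} gives the clean recurrence
\[
P_{\trsh_{p/q}}(n+1)=p\,P_{\trsh_{p/q}}(n)+p(q-1)q^{n}.
\]
For the base case I would invoke Lemma~\ref{contexts}: it guarantees $\abs{\pre_{p/q}(a)}\in\{p,2p\}$ for every $a\in\digs_{pq}$, so each $\pre_{p/q}(a)$ is nonempty, every digit therefore occurs in $L(p/q)$, and hence $L^1(\trsh_{p/q})=\digs_{pq}$, giving $P_{\trsh_{p/q}}(1)=pq$.

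It remains to solve the recurrence. The homogeneous part contributes $Cp^{n}$, and a particular solution of the form $Aq^{n}$ forces $Aq=pA+p(q-1)$, i.e.\ $A=-\frac{p(q-1)}{p-q}$; fitting $C$ to the initial value $P_{\trsh_{p/q}}(1)=pq$ yields $C=\frac{q(p-1)}{p-q}$, and a short rearrangement of $\frac{q(p-1)}{p-q}p^{n}-\frac{p(q-1)}{p-q}q^{n}$ reproduces the stated closed form $pq(p^{n-1}-q^{n-1})\frac{q-1}{p-q}+p^{n}q$. Equivalently, and perhaps most cleanly for write-up, I would simply verify by induction that the proposed formula satisfies both the base case and the recurrence, which is a routine algebraic check. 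I do not expect a genuine obstacle here: the substantive combinatorial content has already been carried out in computing $\abs{W_{2,n}}$, so the only points needing care are the correctness of the predecessor-counting bijection and avoiding arithmetic slips when solving the recurrence.
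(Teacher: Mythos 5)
Your proposal is correct and takes essentially the same route as the paper: the paper's proof rests on exactly the identity $P_{\trsh_{p/q}}(n+1)=2p\abs{W_{2,n}}+p\abs{W_{1,n}}=p\left(P_{\trsh_{p/q}}(n)+\abs{W_{2,n}}\right)$ (the predecessor-counting step you spell out in more detail) together with $\abs{W_{2,n}}=q^n(q-1)$ from Lemma~\ref{construct}, and then verifies the closed form by induction on $n$ starting from $P_{\trsh_{p/q}}(1)=pq$. Your explicit solution of the linear recurrence is only a cosmetic variant of that final algebraic step, as you yourself note when offering the inductive verification as the equivalent write-up.
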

\begin{proof}The proof is by induction. In the case $n=1$ the expression equals $pq$, so let us assume that the equation holds for some $n\in\Npos$. Then
\begin{flalign*}
&P_{\trsh_{p/q}}(n+1)=2p\abs{W_{2,n}}+p\abs{W_{1,n}}=2p\abs{W_{2,n}}+p(P_{\trsh_{p/q}}(n)-\abs{W_{2,n}}) \\
&=p(\abs{W_{2,n}}+P_{\trsh_{p/q}}(n))=p\left(q^n(q-1)+pq(p^{n-1}-q^{n-1})\frac{q-1}{p-q}+p^n q\right) \\
&=pq^n(q-1)\frac{p-q}{p-q}+p^2 q(p^{n-1}-q^{n-1})\frac{q-1}{p-q}+p^{n+1}q \\
&=\left(pq^n(p-q)+p^2 q(p^{n-1}-q^{n-1})\right)\frac{q-1}{p-q}+p^{n+1}q \\
&=\left(p^2 q^n-pq^{n+1}+p^{n+1}q-p^2 q^n\right)\frac{q-1}{p-q}+p^{n+1}q \\
&=pq(p^n-q^n)\frac{q-1}{p-q}+p^{n+1}q.
\end{flalign*}
\end{proof}

\begin{example}For $p/q=3/2$, this is $6(3^{n-1}-2^{n-1})+3^n\cdot 2=4\cdot 3^n-3\cdot 2^n$. The first few terms are $6,24,84,276,876,\dots$\end{example}

\begin{lemma}\label{noperiodtrace}Let $Q=\{np\mid 1\leq n<q\}$, $j\in\Z$ and $x\in \digs_{pq}^\Z$ such that $x[j]\in Q$ and $x[i]=0$ for $i>j$. Then $\tr_{p/q}(x)$ is not eventually periodic.\end{lemma}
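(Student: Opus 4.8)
The plan is to argue by contradiction, modelling the argument on Proposition~\ref{mulAper} but replacing the positivity of $\real_{pq}(x)$ by the persistence of the nonzero defect digit coming from $Q$. Suppose $\tr_{p/q}(x)$ is eventually periodic with period $P$. Writing $s=x[j]\in Q$, the hypothesis says $x=x\glue_j e_{s,j}$, and by Corollary~\ref{tail} this shape is preserved along the orbit: $\Mul_{p/q,pq}^t(x)=x'_t\glue_{j+t}e_{s_t,j+t}$ for suitable $s_t\in Q$ and $x'_t\in\digs_{pq}^\Z$. Thus the space--time diagram of $x$ vanishes identically strictly to the right of the worldline of cells $(j+t,t)$, while on the worldline itself it always carries a digit of $Q$, which is never $0$.

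First I would reduce, as in Proposition~\ref{mulAper} (replacing $x$ by an iterate $\Mul_{p/q,pq}^{i_0}(x)$, which again satisfies the hypothesis), to the case where $\tr_{p/q}(x)[t+P]=\tr_{p/q}(x)[t]$ for all $t\geq t_0$, and then invoke Proposition~\ref{leftdet}: since $\tr_{p/q,-k}(x)=\Delta_{p/q}^k(\tr_{p/q}(x))$ and $\Delta_{p/q}$ has radius $1$, every column of the diagram at a position $\leq 0$ is eventually time-periodic of period $P$. At the origin we have $\tr_{p/q}(x)[t]=0$ for all $t<-j$, whereas at $t=-j$ the origin lies on the worldline and carries a nonzero digit of $Q$. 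Hence if the crossing time $-j$ lies at least $P$ steps past the onset $t_0$, i.e. $-j\geq t_0+P$, then periodicity gives $\tr_{p/q}(x)[-j]=\tr_{p/q}(x)[-j-P]=0$, a contradiction. This settles every configuration whose defect crosses the origin sufficiently late; note that the bound $-j\geq t_0+P$ is unchanged if we replace $x$ by $\Mul_{p/q,pq}^{\pm m}(x)$, since this shifts $-j$ and $t_0$ by the same amount.

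The main obstacle is the complementary regime, in which the defect has already crossed the origin and recedes to the right, so that for all large $t$ the origin only sees the evolving left part $x'_t[0]$ and no zeros are forced. Here the leftward propagation of periodicity (through $\Delta_{p/q}$) and the rightward travel of the defect never meet, which is exactly what makes the lemma strictly stronger than Proposition~\ref{mulAper}. To treat it I would use the ambiguity carried by the defect. On the one hand, applying Lemma~\ref{construct} to $\Mul_{p/q,pq}^{-j}(x)$ (which carries its defect at the origin with zeros to the right) shows that every word $u_n=\tr_{p/q}(x)[-j+1,-j+n]$ lies in $W_{2,n}$, so $u_n$ has predecessors in two residue classes modulo $q$; by Lemma~\ref{restr} each $u_n$ then has successors in a single residue class modulo $p$. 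On the other hand, the companion configuration $x\glue_j e_{s-1,j}$ of Corollary~\ref{tail} has a position-$0$ trace eventually equal to $\tr_{p/q}(x)$, hence also eventually periodic. The plan is to combine these two facts with the assumed periodicity to exhibit a word of $W_2$ that is simultaneously forced to admit two successors in distinct classes modulo $p$, contradicting Lemma~\ref{restr}. Making this final clash precise---reconciling the direction in which periodicity propagates with the direction in which the defect and its two-representation ambiguity travel---is the crux of the argument.
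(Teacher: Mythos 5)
You prove the lemma only in the regime $-j\geq t_0+P$, i.e.\ when the defect's worldline crosses the origin at least $P$ steps after the onset $t_0$ of periodicity; that part is correct, as is your observation that iterating the automaton cannot move a configuration into this regime (crossing time and onset shift together). But the complementary regime is not a side case: it contains every $j\geq 0$, in particular the defect-at-origin configurations that Theorems~\ref{notsoficMahler} and~\ref{notsynch} actually feed into this lemma, and for it you offer only a plan whose decisive step you yourself flag as unresolved. That step has no visible mechanism: Lemma~\ref{restr} forbids the \emph{simultaneous} presence of two predecessor classes mod $q$ and two successor classes mod $p$ around the same word, and while the defect does supply two predecessor classes (via Corollary~\ref{tail}, Lemma~\ref{flipword} and Lemma~\ref{construct}), eventual periodicity of $\tr_{p/q}(x)$ supplies no second successor class: each prefix of the trace has its one actual continuation, and nothing in your setup forces a successor from a different class mod $p$ to be admissible. (Your intermediate assertion that each $u_n$ has successors in a single residue class mod $p$ also does not follow from Lemma~\ref{restr}; only successors compatible with \emph{both} predecessor classes are so constrained.) A further flaw: for $j>0$ you invoke Lemma~\ref{construct} for $\Mul_{p/q,pq}^{-j}(x)$, but Corollary~\ref{tail} propagates the defect form forward in time only; backward iterates of a defect configuration need not have that form, so the claim that $\Mul_{p/q,pq}^{-j}(x)$ carries its defect at the origin is unjustified.

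The paper closes the case you are missing by a quantitative, not combinatorial, argument, and this is the idea absent from your proposal. After reducing to $i_0=0$ and $j\geq0$, it truncates $x$ on the left to the finite configurations $x_n=0^\Z\glue_{-((n+1)P+1)}x$, whose traces inherit the periodicity on the window $[0,nP+1]$. Theorem~\ref{repchange} transfers this agreement to the companion representations, and Lemma~\ref{compfrac} then yields $\abs{\fractional(\real(x_n))-\fractional((p/q)^P\real(x_n))}=\mathcal{O}((q/p)^{nP})$. On the other hand, $\real(x_n)$ has base-$pq$ digits vanishing beyond position $j$, while $(p/q)^P\real(x_n)=\real(\Mul_{p/q,pq}^P(x_n))$ has a nonzero digit (an element of $Q$) at position $j+P$ and zeros beyond it, so the two fractional parts differ by at least $(pq)^{-(j+P)}$ — a contradiction for large $n$. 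Some such appeal to the numeration system (the role of $\Phi$, Theorem~\ref{repchange} and Lemma~\ref{compfrac}) is what your sketch lacks; as it stands, your proposal establishes only the easy half of the statement.
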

\begin{proof}Assume to the contrary that there are $i_0\in\N$, $P\in\Npos$ such that $\tr_{p/q}(x)[i]=\tr_{p/q}(x)[i+P]$ for $i\geq i_0$. By Lemma \ref{tail} we can see that $\Mul_{p/q,pq}^{M}(x)[j+M]\in Q$ and $\Mul_{p/q,pq}^{M}(x)[i+M]=0$ for any $M\in\N$ and for $i>j$, so without loss of generality (by considering the configuration $\Mul_{p/q,pq}^M(x)$ instead of $x$ for sufficiently large $M$ if necessary) $i_0=0$ and $j\geq 0$.

For each $n\in\Npos$ let $x_n=0^\Z\glue_{-((n+1)P+1)}x$, so for every $0\leq i\leq nP+1$ it holds that
\[\tr_{p/q}(x_n)[i]=\tr_{p/q}(x)[i]=\tr_{p/q}(x)[i+P]=\tr_{p/q}(x_n)[i+P]. \]
By Theorem \ref{repchange} $\Phi(\tr(x_n))=\varphi(\real(x_n))$, so it follows that
\[\varphi(\real(x_n))[i]=\varphi(\real(x_n))[i+P]=\varphi\left(\left(\frac{p}{q}\right)^P \real(x_n)\right)[i]\text{ for } 0\leq i\leq nP,\]
which by Lemma \ref{compfrac} implies that $\abs{\fractional(\real(x_n))-\fractional((p/q)^P\real(x_n))}=\mathcal{O}((q/p)^{nP})$. On the other hand, by Lemma \ref{tail}
\[x_n[j]\in Q,\quad \Mul_{p/q,pq}^P(x_n)[j+P]\in Q,\quad x_n[i]=\Mul_{p/q,pq}^P(x_n)[i+P]=0 \mbox{ for } i>j.\]
Since $x_n$ and $\Mul_{p/q,pq}^P(x_n)$ are base-$pq$ representations of the numbers $\real(x_n)$ and $(p/q)^P\real(x_n)$, it follows that
\[\abs{\fractional(\real(x_n))-\fractional((p/q)^P\real(x_n))}\geq (pq)^{-(j+P)},\]
a contradiction for sufficiently big $n\in\Npos$.
\end{proof}

\begin{theorem}\label{notsoficMahler}The subshift $\trsh_{p/q}\cap\digs_p^\Z$ is not sofic.\end{theorem}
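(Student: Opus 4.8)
The plan is to show that $Z:=\trsh_{p/q}\cap\digs_p^\Z$ has infinitely many follower sets, which is impossible for a sofic shift. Recall from~\cite{LM95} that a subshift is sofic if and only if the collection of follower sets $\{v\in L(X)\mid wv\in L(X)\}$, as $w$ ranges over $L(X)$, is finite; equivalently, its deterministic follower-set presentation has finitely many states. A convenient consequence is that in a sofic shift the lexicographically largest right-infinite sequence that occurs must be eventually periodic: building it greedily, one chooses at each step the largest symbol that keeps an infinite admissible future, and since each choice is determined by the current follower set, a repeated follower set forces eventual periodicity. Thus if $Z$ were sofic, its lexicographically maximal element $M$ would be eventually periodic.

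The second step is to identify $M$ with the coding of a boundary orbit. Since $\trsh_0(\Mul_{p/q,pq})$ and $\trsh_1(\Mul_{p/q,pq})$ coincide as subshifts, I read $L(Z)$ through the first fractional digit: a word lies in $L(Z)$ exactly when it is a block of first fractional base-$pq$ digits of a forward orbit $((p/q)^t\xi)_t$ whose fractional parts all stay in $[0,1/q)$. Indeed $\fractional(\eta)<1/q$ is equivalent to the first fractional digit $\lfloor pq\,\fractional(\eta)\rfloor$ lying in $\digs_p$, which is the content of the $\digs_p$-restriction. As this digit is monotone in the fractional part, the maximal sequence $M$ should be the coding of the supremal admissible orbit, namely the orbit of the boundary value $\xi^\ast=1/q$ approached from below. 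The point $\xi^\ast=1/q$ is represented by the configuration $0^\Z\glue_1 e_{p,1}$ of Lemma~\ref{tailInt} (with $s=p\in Q$ placed at position $1$ and zeros to its right), i.e. by a configuration with a $Q$-tail.

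Finally I invoke the non-periodicity input. By Lemma~\ref{noperiodtrace} the trace of any configuration with a $Q$-tail is not eventually periodic, and through the conjugacy $\Phi:\trsh_{p/q}\to Y_{p/q}$ together with Lemma~\ref{compfrac} this non-periodicity is equivalent to the fractional-part orbit of $\xi^\ast$ being non-eventually-periodic; transferring across trace positions by Proposition~\ref{univ1trace}, it is equivalent to $M$ being non-eventually-periodic once the identification of the previous step is granted. This contradicts the first paragraph and proves that $Z$ is not sofic. Note that Proposition~\ref{mulAper} alone does not suffice here, since the maximal ray need not be the trace of a configuration representing a positive real; it is precisely the $Q$-tail refinement of Lemma~\ref{noperiodtrace} that applies to the boundary configuration.

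The main obstacle is the matching carried out in the second step: one must check that the combinatorially defined maximal sequence really equals the arithmetically defined boundary coding. This requires handling the two base-$pq$ representations of $\xi^\ast$ at the (sparse) boundary-crossing times, verifying that the greedy coding determines the boundary orbit faithfully (so that eventual periodicity of $M$ genuinely forces eventual periodicity of the orbit), and confirming that the competition among the admissible subintervals $[k,k+1/q)$ does not disturb which orbit is extremal. Lemma~\ref{restr} is the natural certifying tool here, since it forbids completing the ``$2\times 2$ boxes'' $\{a_iwb_j\}$ of extensions and thereby pins down exactly when a larger digit is inadmissible.
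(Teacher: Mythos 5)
Your first paragraph is fine: in a sofic shift the greedy construction of the lexicographically maximal right ray follows a deterministic walk on finitely many follower sets, so that ray is indeed eventually periodic. The fatal problem is the second step, which you flag as ``the main obstacle'' but never carry out, and which as stated is false. The configuration $0^\Z\glue_1 e_{p,1}$ represents the real number $1/q$ with zeros to the left, and the genuine orbit of $1/q$ leaves $[0,1/q)$ at the very first step: for $p/q=3/2$ the orbit $1/2,\,3/4,\,9/8,\,27/16,\dots$ has first fractional base-$6$ digits $3,4,0,4,\dots$, and the from-below representation $\cdots000.2555\cdots$ fares no better (digits $2,4,\dots$). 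So the trace of your boundary configuration is not an element of $Z=\trsh_{p/q}\cap\digs_p^\Z$ at all, let alone its lexicographic maximum. The underlying reason is that staying in $\digs_p$ is not a property of the fractional part alone: the digit at time $t+1$ depends on the fractional part \emph{and} on the integer part modulo $q$, i.e.\ on the digits to the left. The $Q$-tail configurations whose traces do stay in $\digs_p$ --- the ones Lemma~\ref{construct} produces and the only ones Lemma~\ref{noperiodtrace} applies to --- necessarily carry nonzero, adaptively chosen digits to the left of the tail; this is exactly how the paper builds its configuration $z$ via Proposition~\ref{flip}. Relatedly, your description of $L(Z)$ is off: a word of $L(Z)$ must occur in a \emph{bi-infinite} trace lying in $\digs_p^\Z$, which is not the same as occurring in a block of an orbit whose fractional parts stay in $[0,1/q)$ (indeed whether any such infinite real orbit other than $0$ exists is the open generalized Mahler problem).

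Compare with what the paper actually does: it never uses extremality. It constructs one ray $\tr_{p/q}(z)[1,\infty]$ in $\digs_p^\N$ whose predecessor set has size $2p$, i.e.\ it admits two predecessors $a\not\equiv b \pmod q$ both in $\digs_p$; it extends both leftward inside $\digs_p$ to get $x_1,x_2\in Z$; and it applies the pumping lemma to the \emph{product} sofic shift $Z\times Z$, so that the resulting eventually periodic points $y_1,y_2$ still agree from position $1$ onward while differing modulo $q$ at position $0$. That doubled-predecessor property (membership of all prefixes in $W_{2,n}$, via Lemmas~\ref{flipword} and~\ref{wordcontexts}) is the certificate that survives pumping, and Lemma~\ref{construct} converts it into ``this ray is the trace of a $Q$-tail configuration,'' at which point Lemma~\ref{noperiodtrace} gives the contradiction. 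Your outline substitutes lexicographic maximality for this certificate, but nothing in Lemma~\ref{restr} or elsewhere shows that the maximal ray of $Z$ has two predecessors incongruent modulo $q$, or is a $Q$-tail trace; without that bridge, Lemma~\ref{noperiodtrace} cannot be invoked and the contradiction never materializes. To salvage your route you would have to prove precisely that missing implication, which looks at least as hard as the paper's argument.
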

\begin{proof}Assume to the contrary that $\trsh_{p/q}\cap\digs_p^\Z$ is sofic. We define $z\in \digs_{pq}^\Z$ as follows. First let $z[0]=p$ and $z[i]=0$ for $i\in\Npos$. Now let $i\in\Npos$ and assume that $z[0],\dots,z[-(i-1)]$ have been defined. By the permutivity property of Proposition \ref{flip} we can define $z[-i]$ in such a way that $\tr_{p/q}(z)[i]\in\digs_{p}$. By Lemma \ref{construct} the inclusion $\tr_{p/q}(z)[1,n]\in W_{2,n}$ holds for all $n\in\Npos$. A compactness argument together with Lemma \ref{wordcontexts} shows that $\pre_{p/q}(\tr_{p/q}(z)[1,\infty])=Q_{q,p}(d)\cup Q_{q,p}(d+1)$ for some $d\in\digs_{pq}$. We can choose $a\in Q_{q,p}(d)\cap\digs_p$ and $b\in Q_{q,p}(d+1)\cap\digs_p$ so in particular $a\not\equiv b\pmod q$.

We define $x_1,x_2\in \trsh_{p/q}$ as follows. First let $x_1[0,\infty]=a\tr_{p/q}(z)[1,\infty]$ and $x_2[0,\infty]=b\tr_{p/q}(z)[1,\infty]$. Now let $i\in\Npos$ and assume inductively that $x_1[0],\dots,x_1[-(i-1)]$ have been defined so that all prefixes of $x[-(i-1),\infty]$ are in $L(\trsh_{p/q})\cap\digs_p^*$. By Lemma \ref{flipword} and by compactness there exists $e\in\digs_{pq}$ such that $Q_{q,p}(e)\subseteq\pre_{p/q}(x[-(i-1),\infty])$. Then choose arbitrarily $x[i]\in Q_{q,p}(e)\cap\digs_{p}$. All the subwords of $x_1$ belong to $L(\trsh_{p/q})\cap\digs_p^*$ and therefore $x_1\in \trsh_{p/q}\cap\digs_p^\Z$. By the same argument we define $x_2$ so that $x_2\in \trsh_{p/q}\cap\digs_p^\Z$.

Define $x\in (\trsh_{p/q}\cap \digs_p^\Z)\times (\trsh_{p/q}\cap \digs_p^\Z)\subseteq(\digs_{p}^2)^\Z$ by $x[i]=(x_1[i],x_2[i])$ for $i\in\Z$. Since $(\trsh_{p/q}\cap \digs_p^\Z)\times (\trsh_{p/q}\cap \digs_p^\Z)$ is also sofic, by the pumping lemma of regular languages there exist $N,P\in\Npos$ such that

\[y_i=x_i[-\infty,N-1]x_i[N,N+P-1]^\infty\in \trsh_{p/q}\text{ for }i\in\{1,2\}.\]
Because $y_1[0]=x_1[0]=a\not\equiv b=x_2[0]=y_2[0]\pmod q$ and $y_1[i]=y_2[i]$ for $i>0$, it follows that $y_1[1,n]\in W_{2,p}$ for every $n\in\Npos$, so by compactness and by Lemma \ref{construct} there exists $y\in \digs_{pq}^\Z$ such that $y[0]\in Q=\{np\mid 1\leq n<q\}$, $y[i]=0$ for $i>0$ and $\tr_{p/q}(y)[1,\infty]=y_1[1,\infty]$: in particular $\tr_{p/q}(y)[i]=\tr_{p/q}(y)[i+P]$ for every $i\geq N$, which contradicts the previous lemma.
\end{proof}

\begin{corollary}\label{notreg}The subshift $\trsh_{p/q}$ is not sofic. In particular, the CA $\Mul_{p/q,pq}$ is not regular.\end{corollary}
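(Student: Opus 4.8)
The plan is to derive both assertions from Theorem~\ref{notsoficMahler} together with the two characterizations recalled in Section~\ref{sectPreli}: that a subshift is sofic exactly when it is a factor of a subshift of finite type, and that a CA is regular exactly when all of its subshift factors are sofic.

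First I would prove that $\trsh_{p/q}$ is not sofic by contraposition: assuming $\trsh_{p/q}$ \emph{is} sofic, I would show that its restriction $\trsh_{p/q}\cap\digs_p^\Z$ is sofic as well, contradicting Theorem~\ref{notsoficMahler}. The point one must handle with care is that a subshift contained in a sofic shift need \emph{not} be sofic in general; what rescues the argument is the special form of this particular restriction. The subshift $\trsh_{p/q}\cap\digs_p^\Z$ is obtained from $\trsh_{p/q}$ by forbidding the finitely many symbols $p,p+1,\dots,pq-1$, i.e. it is the intersection of $\trsh_{p/q}$ with the full shift over the subalphabet $\digs_p$. I would argue that soficness is preserved under such subalphabet restrictions. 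Concretely, write $\trsh_{p/q}=\pi(Z)$ for a subshift of finite type $Z$ and a sliding block code $\pi$ with some local rule $\phi$. The set $Z'$ of those $z\in Z$ whose image $\pi(z)$ uses no symbol from $\{p,\dots,pq-1\}$ is obtained from $Z$ by additionally forbidding the finitely many blocks on which $\phi$ takes a value $\geq p$; hence $Z'$ is again a subshift of finite type. Its image $\pi(Z')$ is precisely $\trsh_{p/q}\cap\digs_p^\Z$ (the inclusion $\subseteq$ is clear, and any point of $\trsh_{p/q}\cap\digs_p^\Z$ has a $\pi$-preimage that automatically lies in $Z'$). Thus $\trsh_{p/q}\cap\digs_p^\Z$ is a factor of a subshift of finite type, hence sofic, which is the desired contradiction.

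For the regularity claim I would appeal directly to the definition. As established in Section~\ref{sectPreli}, the trace map $\tr_{\Mul_{p/q,pq}}\colon(\digs_{pq}^\Z,\Mul_{p/q,pq})\to(\trsh_{p/q},\sigma)$ is a factor map, so $\trsh_{p/q}$ is itself a subshift factor of $\Mul_{p/q,pq}$. Having just shown that this one subshift factor is not sofic, I conclude that $\Mul_{p/q,pq}$ violates the requirement that all of its subshift factors be sofic, and is therefore not regular. The only genuinely delicate step is the preservation of soficness under restriction to a subalphabet; I would take care to exploit the concrete structure of the restriction (forbidding a block of symbols) rather than invoke any false general principle about sub-subshifts of sofic shifts, while everything else is a routine unwinding of the definitions.
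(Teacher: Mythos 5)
Your proposal is correct and follows essentially the same route as the paper: assume $\trsh_{p/q}$ is sofic, deduce that $\trsh_{p/q}\cap\digs_p^\Z$ is sofic, contradict Theorem~\ref{notsoficMahler}, and then conclude non-regularity because $\trsh_{p/q}$ is itself a subshift factor of $\Mul_{p/q,pq}$ via the trace map. The only difference is in one step's justification: where the paper simply cites that $\trsh_{p/q}\cap\digs_p^\Z$ is sofic as an intersection of two sofic subshifts, you re-prove that special case by hand by restricting an SFT cover to the preimage of the subalphabet, which is a valid, self-contained argument for the same fact.
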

\begin{proof}Assume to the contrary that $\trsh_{p/q}$ is sofic. Then $\trsh_{p/q}\cap\digs_p^\Z$ is also sofic as the intersection of two sofic subshifts, but this is impossible by the previous theorem.

The subshift $\trsh_{p/q}$ is an example of a non-sofic subshift factor of $\Mul_{p/q,pq}$, so it cannot be regular.
\end{proof}

We mention in passing that Jalonen and Kari show in Proposition~6 of~\cite{JK18} that there exists a reversible CA on a full shift which is left expansive (stated in~\cite{JK18} for right expansive CA) and has a non-sofic trace subshift. The previous corollary gives an alternative proof of this fact, because it follows from Proposition \ref{leftdet} that $\Mul_{p/q,pq}$ is left expansive.

The fact that $\trsh_{p/q}$ is not sofic was presented as a corollary of Theorem~\ref{notsoficMahler}. We proved the theorem on the subshift $\trsh_{p/q}\cap\digs_p^\Z$ due to the its connection to Mahler's $3/2$-problem. If we had no interest in the restriction of the subshift $\trsh_{p/q}$ to the alphabet $\digs_p$, then it would have been easier to prove Corollary~\ref{notreg} directly.

Our result has room for refinement. To ask further questions, let us generalize the language theoretical classification of CA. Instead of the class of sofic subshifts, one can consider an arbitrary class $\mathcal{C}$ of subshifts and ask whether all subshift factors of a CA $F$ belong to the class $\mathcal{C}$. If $X$ is a subshift factor of $F$ and $Y$ is a subshift factor of $X$, then $Y$ is also a factor of $Y$. Therefore it really makes sense to only consider classes $\mathcal{C}$ that are closed with respect to taking subshift factors: as noted in Section~\ref{sectPreli}, sofic subshifts form one such class.

Another class which is closed with respect to taking subshift factors is the class of coded subshifts. They can be characterized precisely as the possible factor subshifts of synchronizing subshifts~\cite{Bla86}, and in particular this class properly contains all transitive sofic subshifts. We therefore ask the following.

\begin{problem}
Are all subshift factors of $\Mul_{p/q,pq}$ coded subshifts?
\end{problem}

We guess that the answer to this problem is negative. The next result gives further evidence to support our guess.

\begin{theorem}\label{notsynch}The subshift $\trsh_{p/q}$ is not synchronizing.\end{theorem}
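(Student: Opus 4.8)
The plan is to assume for contradiction that $\trsh_{p/q}$ is synchronizing and to derive a contradiction with Lemma~\ref{noperiodtrace}. Being synchronizing, $\trsh_{p/q}$ is transitive and $L(p/q)$ contains a synchronizing word $w$, which we may take to be nonempty; write $\ell=\abs{w}$. The conceptual engine is the following preservation property: \emph{if $u'w\in L(p/q)$ and $z$ is any word with $u'wz\in L(p/q)$, then $\pre_{p/q}(u'wz)=\pre_{p/q}(u'w)$.} The inclusion $\subseteq$ is factoriality; for $\supseteq$, if $a\in\pre_{p/q}(u'w)$ then $au'w\in L(p/q)$, and since $wz\in L(p/q)$, applying the defining property of the synchronizing word $w$ (to the central occurrence of $w$) gives $au'wz\in L(p/q)$, i.e. $a\in\pre_{p/q}(u'wz)$. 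Thus appending symbols after an occurrence of $w$ never changes the set of predecessors.

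The key step is to exhibit a word ending in $w$ that has two predecessor classes, i.e. a word $u'w\in W_{2,\abs{u'w}}$. I claim $w$ occurs inside the trace of a configuration of the form treated in Lemma~\ref{construct}. Fix $k>\ell$. By Corollary~\ref{tail} and the reversibility of $\Mul_{p/q,pq}$, the map $\Mul_{p/q,pq}^{k}$ restricts to a bijection from the set of configurations with a $Q$-digit at position $0$ and zeroes to the right onto the analogous set with the $Q$-digit at position $k$; on the target side the window of coordinates $[-\ell,\ell]$ is completely free, since it lies strictly left of the kick. Since every word of $L^\ell(p/q)$ is realized as $\tr_{p/q}(y)[1,\ell]$ by a suitable choice of $y[-\ell,\ell]$, we may choose a configuration $x$ with $x[0]\in Q$ and $x[i]=0$ for $i>0$ such that $\tr_{p/q}(x)[k+1,k+\ell]=w$. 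Then the prefix $u'w\doteqdot\tr_{p/q}(x)[1,k+\ell]$ lies in $W_{2,k+\ell}$ by Lemma~\ref{construct}, ends in $w$, and has exactly two predecessor classes, say $\pre_{p/q}(u'w)=Q_{q,p}(d)\cup Q_{q,p}(d+1)$.

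Next I would manufacture an eventually periodic tail with the same predecessor set. By transitivity there is a word $t$ with $wtw\in L(p/q)$; an easy induction using the synchronizing property shows $w(tw)^n\in L(p/q)$ for all $n$, and then the synchronizing property again yields $u'w(tw)^n\in L(p/q)$ for all $n$. Hence the eventually periodic right-infinite sequence $r\doteqdot u'w(tw)^\infty$ has all of its prefixes in $L(p/q)$, so it is the positive tail of some element of $\trsh_{p/q}$. Every prefix of $r$ of length at least $\abs{u'w}$ has the form $u'wz$, so by the preservation property $\pre_{p/q}(u'wz)=\pre_{p/q}(u'w)$ has two classes; together with Lemma~\ref{wordcontexts} this forces every prefix of $r$ to lie in $W_2$.

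Finally I would reach the contradiction exactly as in the proof of Theorem~\ref{notsoficMahler}. Since each prefix $r[0,n-1]\in W_{2,n}=\bigcup_{s\in Q}W_s$ and $Q$ is finite, a compactness argument together with Lemma~\ref{construct} produces a configuration $y$ with $y[0]\in Q$, $y[i]=0$ for $i>0$, and $\tr_{p/q}(y)[1,\infty]=r$. As $r$ is eventually periodic, so is $\tr_{p/q}(y)$, contradicting Lemma~\ref{noperiodtrace}. The heart of the argument, and the step I expect to be most delicate to write carefully, is the realization claim of the second paragraph: that the synchronizing word $w$ can be made to occur as a suffix of a word with two predecessor classes, for which the bijectivity of $\Mul_{p/q,pq}^{k}$ on the ``$Q$-tailed'' configurations of Corollary~\ref{tail} is essential. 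Everything else is bookkeeping with the synchronizing property and the previously established structure of $W_2$.
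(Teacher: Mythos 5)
Your overall strategy is the paper's: make the synchronizing word occur in the trace of a configuration with a $Q$-digit at the origin and zeroes to its right (call this set of configurations $A_0$, and $A_k$ its analogue with the $Q$-digit at position $k$), use the synchronizing property to build an eventually periodic tail all of whose prefixes stay in $W_2$, and then contradict Lemma~\ref{noperiodtrace} via Lemma~\ref{construct} and compactness. The last two stages of your argument (the predecessor-preservation property of $w$, the tail $u'w(tw)^\infty$, and the final compactness step) are correct. But the step you yourself flag as the most delicate one is wrong: $\Mul_{p/q,pq}^{k}$ does \emph{not} map $A_0$ onto $A_k$. Reversibility gives injectivity, and Corollary~\ref{tail} gives $\Mul_{p/q,pq}^{k}(A_0)\subseteq A_k$, but surjectivity fails. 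Concretely, let $y\in A_k$ be the finite configuration with $y[k]=p$ and $y[i]=0$ for $i\neq k$, so that $\real_{pq}(y)=p(pq)^{-k}$. Since $\Mul_{p/q,pq}^{\pm 1}$ maps finite configurations to finite configurations and scales the represented number by $(p/q)^{\pm 1}$, the configuration $\Mul_{p/q,pq}^{-k}(y)$ is the finite configuration representing $p(pq)^{-k}(q/p)^{k}=p^{1-2k}<1$; its digit at position $0$ is therefore $0\notin Q$, so $\Mul_{p/q,pq}^{-k}(y)\notin A_0$ and $y\notin \Mul_{p/q,pq}^{k}(A_0)$. (For $p/q=3/2$ and $k=1$: the configuration $\cdots 00.30^\infty\in A_1$ pulls back to $\cdots 00.20^\infty\notin A_0$.) Hence the window $[-\ell,\ell]$ is not ``completely free'' within the image of $A_0$, and your pull-back producing $x\in A_0$ with $\tr_{p/q}(x)[k+1,k+\ell]=w$ has no justification.

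The conclusion of that step is true, but it is exactly the hard core of the theorem, and the paper needs genuinely heavier machinery for it: starting from $z'=\cdots 000p.000\cdots$ it forms $z_i=\sigma^i(\Mul_{p/q,pq}^i(z'))\in A_0$ and invokes the result of~\cite{Har12} that the orbit of any aperiodic configuration under the $\Z\times\N$-action generated by $\sigma$ and $\Mul_{p/q,pq}$ is dense, together with a limit-point argument (including the trick of appending the letter $1$ to the cylinder word so as to control where the occurrence of $wuw$ sits relative to the $Q$-digit), to place an occurrence of $wuw$ in $\tr_{p/q}(z_k)[1,\infty]$ for a suitable $k$. To repair your proof you must replace the false bijection by such an argument showing that the synchronizing word actually occurs in the trace of some element of $A_0$; with that in hand, the rest of your write-up can stand essentially as is.
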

\begin{proof}
Assume to the contrary that $\trsh_{p/q}$ has a synchronizing word $w\neq\epsilon$. By~\cite{KK17} the CA $\Mul_{p/q,pq}$ is transitive, so $\trsh_{p/q}$ is also transitive as a factor of $\Mul_{p/q,pq}$. In particular there is a word $u\in L(\trsh_{p/q})$ such that $wuw\in L(\trsh_{p/q})$ and a word $v'\in\digs_{pq}^{2n-1}$ with $n=\abs{wuw}>1$ such that $\tr_{p/q}(y)[0,n-1]=wuw$ whenever $y\in\cyl(v',-n+1)$. Let $v=v'1$.

Let $z'=\cdots 000p.000\cdots$ and let $z_i=\sigma^i(\Mul_{p/q,pq}^i(z'))$ for $i\in\N$. By Corollary~\ref{tail} there are $s_i\in Q=\{np\mid 1\leq n<q\}\subseteq \digs_{pq}$ for every $i\in\N$ such that $z_i[0,\infty]=s_i 0^\infty$. The sequence $(z_i)_{i\in\N}$ has a converging subsequence with limit $z'\in\digs_{pq}^\Z$. By~\cite{Har12} the orbit of any aperiodic configuration is dense under the action of the additive monoid $\Z\times\N$ via the maps $\sigma^j\circ\Mul_{p/q,pq}^t$, where $(j,t)\in\Z\times\N$. In particular, there are $t\in\N$ and $j'\in\Z$ such that $\Mul_{p/q,pq}^t(z')\in\cyl(v,j')$. The configuration $z=\sigma^t(\Mul_{p/q,pq}^t(z'))$ is also a limit point of $(z_i)_{i\in\N}$ and it satisfies $z\in\cyl(v,j)$ for $j=j'-t$. Because the last letter of $v$ is $1$, it follows from $z[1,\infty]=0^\infty$ that $j\leq -2n$. Since $z$ is a limit point of $(z_i)_{i\in\N}$, it follows that $z_i\in\cyl(v,j)$ for arbitrarily large $i\in\N$. Fix some such $i$ with the additional property that $i+j+n-1\geq 0$ and let $k=i+j+n-1$. Then $i-k>0$ and $\Mul_{p/q,pq}^{i-k}(z_k)=\sigma^{-(i-k)}(z_i)\in\cyl(v,j+(i-k))=\cyl(v,-n+1)$. It follows that $\tr_{p/q}(z_k)[0,\infty]$ has a prefix of the form $sw'wuw$ for $s=s_k$ and $w'\in L(\trsh_{p/q})$.

By Lemma~\ref{construct} the inclusion $\tr_{p/q}(z)[1,n]\in W_{2,n}$ holds for all $n\in\Npos$. A compactness argument together with Lemma~\ref{wordcontexts} can be used to show that $\pre_{p/q}(\tr_{p/q}(z)[1,\infty])=Q_{q,p}(d)\cup Q_{q,p}(d+1)$ for some $d\in\digs_{pq}$. Because $w$ is synchronizing, there are $x_1,x_2\in\trsh_{p/q}$ such that $x_1[0,\infty]=dw'(wu)^\infty$ and $x_2[0,\infty]=(d+1)w'(wu)^\infty$. It follows that $x_1[1,n]\in W_{2,p}$ for every $n\in\Npos$, so by compactness and by Lemma~\ref{construct} there exists $x\in \digs_{pq}^\Z$ such that $x[0]\in Q=\{np\mid 1\leq n<q\}$, $x[i]=0$ for $i>0$ and $\tr_{p/q}(x)[1,\infty]=x_1[1,\infty]$: in particular $\tr_{p/q}(x)$ is eventually periodic, which contradicts Lemma~\ref{noperiodtrace}.
\end{proof}

\section{Realization of Complexity Functions by Sofic Subshifts}\label{secRealize}

One may ask whether the computation of the complexity function of $\trsh_{p/q}$ in Theorem~\ref{combi} would alone be sufficient to conclude that it is not sofic. The answer turns out to be negative: there is a (non-transitive) sofic subshift that has the same complexity function.

\begin{definition}
The \emph{generating function} of a function $S:\N\to\C$ is the formal power series $f(z)=\sum_{n=0}^\infty S(n)z^n$.
\end{definition}

Note that for any $m\in\Npos$ the complexity function of the regular language $\digs_m^*$ is $P_{\digs_m^*}(n)=m^n$ and its generating function is $\sum_{i=0}^\infty m^n z^n=(1-m z^n)^{-1}$.

We first compute the generating function of $P_{\trsh_{p/q}}(n)$.

\begin{theorem}
The generating function of $P_{\trsh_{p/q}}$ is $f_{p/q}(z)=\frac{1+(pq-p-q)z}{(1-pz)(1-qz)}$ for coprime $p>q>1$.
\end{theorem}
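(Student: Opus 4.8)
The plan is to substitute the closed-form expression for $P_{\trsh_{p/q}}(n)$ obtained in Theorem~\ref{combi} directly into the definition $f_{p/q}(z)=\sum_{n=0}^\infty P_{\trsh_{p/q}}(n)z^n$ and then sum the resulting series term by term. The first thing I would flag is the summand $n=0$: since $\trsh_{p/q}$ is a nonempty subshift, its only word of length $0$ is $\epsilon$, so $P_{\trsh_{p/q}}(0)=1$. This value is \emph{not} produced by the formula in Theorem~\ref{combi}, which is stated only for $n\in\Npos$, and so it must be added separately as the constant contribution to $f_{p/q}$.

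For $n\geq 1$, abbreviating $C=\frac{pq(q-1)}{p-q}$, the formula reads $P_{\trsh_{p/q}}(n)=C(p^{n-1}-q^{n-1})+p^n q$, and I would compute $\sum_{n=1}^\infty P_{\trsh_{p/q}}(n)z^n$ by splitting it into geometric series. Using the identity $\sum_{n=1}^\infty p^{n-1}z^n=\frac{z}{1-pz}$ (and the analogue for $q$), the first part collapses to $C\left(\frac{z}{1-pz}-\frac{z}{1-qz}\right)=\frac{pq(q-1)z^2}{(1-pz)(1-qz)}$, where the crucial point is that the factor $p-q$ arising from the difference cancels with the denominator of $C$. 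The remaining part is simply $q\sum_{n=1}^\infty p^n z^n=\frac{pqz}{1-pz}$.

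The final step is to add the $n=0$ term and place the three pieces over the common denominator $(1-pz)(1-qz)$, which gives numerator $(1-pz)(1-qz)+pq(q-1)z^2+pqz(1-qz)$. Expanding and collecting by degree, the constant term is $1$, the linear term is $(pq-p-q)z$, and the two quadratic contributions $pq^2z^2$ and $-pq^2z^2$ cancel exactly, leaving numerator $1+(pq-p-q)z$ and hence the claimed form. I expect no genuine obstacle here: the argument is a routine generating-function computation once Theorem~\ref{combi} is available. The only places requiring a little care are the bookkeeping for the $n=0$ summand and the exact cancellation of the $z^2$ terms; as a sanity check one may note that the denominator $(1-pz)(1-qz)=1-(p+q)z+pqz^2$ reflects the fact that $P_{\trsh_{p/q}}$ is a linear combination of $p^n$ and $q^n$ and so satisfies the linear recurrence with characteristic roots $p$ and $q$.
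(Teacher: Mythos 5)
Your proof is correct and takes essentially the same route as the paper's: substitute the closed form from Theorem~\ref{combi} into the definition and collapse the resulting geometric series over the common denominator $(1-pz)(1-qz)$. The only difference is bookkeeping at $n=0$: the paper sums its decomposition $P_1(n)(q-1)+P_2(n)$ from $n=0$ onward, implicitly relying on the fact that this expression evaluates to $1=P_{\trsh_{p/q}}(0)$ there, whereas you split off the $n=0$ term explicitly --- a point of care the paper glosses over.
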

\begin{proof}
We can write $P_{\trsh_{p/q}}(n)=P_1(n)(q-1)+P_2(n)$, where $P_1(n)=(qp^n-pq^n)/(p-q)$ and $P_2(n)=p^n q$. The generating function of $P_1$ is
\[f_1(z)=\frac{q/(1-pz)-p/(1-qz)}{p-q}=\frac{q-q^2 z-p+p^2 z}{(1-pz)(1-qz)(p-q)}=\frac{(p+q)z-1}{(1-pz)(1-qz)}\]
and the generating function of $P_2$ is
\[f_2(z)=\frac{q}{1-pz}=\frac{q-q^2 z}{(1-pz)(1-qz)},\]
so the generating function of $P_{\trsh_{p/q}}$ is
\begin{flalign*}
&f_{p/q}(z)=\frac{((p+q)z-1)(q-1)+(q-q^2 z)}{(1-pz)(1-qz)} \\
&=\frac{((1-q)+(-p-q+pq+q^2)z)+(q-q^2 z)}{(1-pz)(1-qz)}=\frac{1+(pq-p-q)z}{(1-pz)(1-qz)}.
\end{flalign*}
\end{proof}

\begin{theorem}
For all coprime $p>q>1$ there is a sofic subshift $Z_{p/q}$ such that $P_{Z_{p/q}}(n)=P_{\trsh_{p/q}}(n)$ for every $n\in\N$.
\end{theorem}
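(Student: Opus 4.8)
The plan is to exhibit $Z_{p/q}$ explicitly as a sofic shift whose language is a simple regular language, and then to count its words of each length directly. First I would rewrite the complexity function from Theorem~\ref{combi} in the form
\[P_{\trsh_{p/q}}(n)=p^n+p(q-1)\frac{p^n-q^n}{p-q},\]
which one verifies equals the stated closed form for every $n\in\Npos$ (both sides multiply by $p-q$ to $p^nq(p-1)-p(q-1)q^n$). This splits the count into a ``full $p$-shift'' part $p^n$ and a ``crossing'' part $p(q-1)\frac{p^n-q^n}{p-q}=p(q-1)\sum_{a+b=n-1}p^aq^b$, and the latter form already suggests building words from a $\digs_p$-block, a marker, and a $\digs_q$-block.

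Next I would define $Z_{p/q}$ to be the subshift whose language is
\[L=\digs_p^*\cup \digs_p^*\,\mathcal{M}\,\digs_q^*,\]
where $\digs_q=\{0,\dots,q-1\}\subseteq\digs_p=\{0,\dots,p-1\}$ and $\mathcal{M}$ is a set of $p(q-1)$ fresh marker symbols disjoint from $\digs_p$. One checks that $L$ is factorial and extendable, so it is the language of a genuine subshift, and that $L$ is regular, so $Z_{p/q}$ is sofic. (Equivalently, $Z_{p/q}$ is presented by the labeled graph with one ``$P$-vertex'' carrying $p$ self-loops labelled by $\digs_p$, one ``$Q$-vertex'' carrying $q$ self-loops labelled by $\digs_q$, and $p(q-1)$ edges from $P$ to $Q$ labelled by the markers, with no edge back.)

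Then I would count $L\cap\digs^n$ for $n\in\Npos$. A word containing no marker symbol must lie in $\digs_p^*$ or in $\digs_q^*$; since $\digs_q\subseteq\digs_p$, the marker-free words of length $n$ are exactly $\digs_p^n$, contributing $p^n$. A word containing a marker has a unique marker (two markers never occur in $L$), so it is uniquely of the form $u\mu v$ with $u\in\digs_p^a$, $\mu\in\mathcal{M}$, $v\in\digs_q^b$ and $a+1+b=n$; summing $p^a\cdot p(q-1)\cdot q^b$ over $a+b=n-1$ gives $p(q-1)\frac{p^n-q^n}{p-q}$. Adding the two contributions yields $P_{Z_{p/q}}(n)=P_{\trsh_{p/q}}(n)$ for all $n\in\Npos$ (and trivially $P_{Z_{p/q}}(0)=1$), as required.

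The main obstacle is conceptual rather than computational: the most natural attempt---glue a full $p$-shift and a full $q$-shift along a single marker---fails, because a subshift is closed, so letting the marker escape to infinity forces the pure $\digs_q$-configuration into the shift and contributes an unwanted $+q^n$ to the complexity. In fact this overcount cannot be repaired within edge shifts, since matching the eigenvalue data $\{p,q\}$ forces the trace $p+q$ to exceed the available edge total $p(q-1)$ once $q=2$. The device that removes the overcount is to let the $q$-phase reuse a $q$-element \emph{subset} of the $p$-phase alphabet, so that every pure-$\digs_q$ label already occurs as a $\digs_p$-word and is therefore not counted a second time, while the markers, being fresh symbols, keep the crossing words distinct from everything else. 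Verifying that this label identification is the only coincidence---and that it causes no failure of factoriality or extendability---is the one place that needs care.
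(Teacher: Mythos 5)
Your construction and count are correct, and they take a route that differs from the paper's in its key device. The paper also realizes $P_{\trsh_{p/q}}$ as the complexity function of a factorial, extendable regular language of the shape ``$p$-block, marker, $q$-block'', but with all three alphabets pairwise \emph{disjoint}: it sets $L(Z_{p/q})=L_1ML_2$ with $L_1=P^*$, $L_2=Q^*$, $M=\epsilon\cup R$, where $\abs{P}=p$, $\abs{Q}=q$, $\abs{R}=pq-p-q$, so that the $p$-phase may pass into the $q$-phase either directly or across a single marker; correctness is then read off by multiplying generating functions, $(1-pz)^{-1}\bigl(1+(pq-p-q)z\bigr)(1-qz)^{-1}=f_{p/q}(z)$, invoking the preceding theorem. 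Your decomposition $p^n+p(q-1)\frac{p^n-q^n}{p-q}$ instead forces every crossing through one of $p(q-1)$ markers and hides the pure-$\digs_q$ words inside the pure-$\digs_p$ words via the nesting $\digs_q\subseteq\digs_p$; the verification is then a self-contained elementary count that needs neither the generating function nor the previous theorem. What the paper's variant buys is slightly stronger structure: its $Z_{p/q}$ is of finite type (all constraints are on adjacent pairs of symbols, as the paper notes in its Conclusions), whereas yours is sofic but \emph{not} of finite type, since the requirement that only $\digs_q$-symbols follow a marker is not locally checkable: for $a\in\digs_p\setminus\digs_q$ and large $k$, every bounded window of ${}^\infty 0\,\mu\,0^k a\,0^\infty$ lies in $L$, although the configuration is not in $Z_{p/q}$. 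This costs nothing here, since the statement asks only for soficity.

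One correction to your motivating remark: the assertion that the overcount ``cannot be repaired within edge shifts'' is overstated, and the eigenvalue/trace heuristic you give for it is not conclusive. What is true is that gluing two \emph{disjoint} full shifts with nothing between the phases but markers fails for every marker count, as comparing the coefficients of $p^n$ and $q^n$ shows. The paper repairs exactly this while keeping the alphabets disjoint: it permits the $p$-phase to pass into the $q$-phase with no marker at all, so the marker-free count becomes the full convolution $\sum_{a+b=n}p^aq^b$ and the marker set shrinks to size $pq-p-q$. Your nesting device is thus sufficient, but it is not the only repair.
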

\begin{proof}
To construct a sofic shift $Z_{p/q}$ whose complexity function is $P_{\trsh_{p/q}}$, we construct a factorial extendable regular language whose complexity function has the generating function $f_{p/q}$. To achieve this, first let $P,Q$ and $R$ be disjoint alphabets such that $\abs{P}=p$, $\abs{Q}=q$ and $\abs{R}=pq-p-q$. Then the regular languages $L_1=P^*$, $L_2=Q^*$ and $M=\epsilon\cup R$ have complexity functions with generating functions $(1-pz)^{-1}$, $(1-qz)^{-1}$ and $1+(pq-p-q)z$ respectively. It is easy to see that $L_1 M L_2$ is a factorial extendable regular language, so there is a sofic shift $Z_{p/q}$ such that $L(Z_{p/q})=L_1 M L_2$.
\end{proof}

\section{Conclusions}

In this paper we have studied the trace subshift $\trsh_{p/q}$ of the multiplication automaton $\Mul_{p/q,pq}$ for coprime $p>q>1$. As a result of this study we have shown that $\trsh_{p/q}$ is not sofic and not even synchronizing, i.e. $\Mul_{p/q,pq}$ is not simple according to the language theoretical classification of CA. The subshift $\trsh_{p/q}$ remains yet to be completely understood.

\begin{problem}What is the ``correct'' class of subshifts in which $\trsh_{p/q}$ belongs? Is $\trsh_{p/q}$ a coded subshift? Elements of $L(p/q)$ are easily computable by applying the CA $\Mul_{p/q,pq}$, but is there a more conceptual characterization of the language $L(p/q)$?\end{problem}

We also showed that computing the complexity function of $\trsh_{p/q}$ is not sufficient to conclude that it is not sofic. This was done by constructing a (non-transitive) sofic subshift having the same complexity function, and in fact the constructed subshift is of finite type. Mike Boyle has shown in a private communication that in the special case $p/q=3/2$ the complexity function $P_{\trsh_{p/q}}(n)$ can also be realized by a transitive sofic shift. We briefly present his construction.

Let $X$ be the sofic subshift on the alphabet $A=\{a,b,c,d,e\}$ consisting of the labels of all bi-infinite paths on the graph presented in Figure~\ref{transSofic}. The total number of paths of length $n$ (not taking the labeling into account) is equal to $4\cdot 3^n$. To count the number of possible labels for these paths, note that a sequence $w\in A^n$ is a label for multiple paths if and only if $w\in\{a,b\}^n$, and there are $2^n$ such sequences. Any such sequence $w$ is a label of precisely $4$ paths on the graph (each path starting at a different vertex). By subtracting the $3$ extra paths for each $w$ we can conclude that $P_X(n)=4\cdot 3^n-3\cdot 2^n=P_{\trsh_{3/2}}(n)$. 

\begin{problem}
For which values of $p/q$ does the exist a transitive sofic shift that has the same complexity function as $\trsh_{p/q}$?
\end{problem}

\begin{figure}
\centering
\begin{tikzpicture}[auto]
\node (d1) at (0,3) [shape=circle,draw,minimum size=6mm] {};
\node (d2) at (3,3)  [shape=circle,draw,minimum size=6mm] {};
\node (d3) at (3,0)  [shape=circle,draw,minimum size=6mm] {};
\node (d4) at (0,0)  [shape=circle,draw,minimum size=6mm] {};

\draw[-{triangle 45}] (d1) to [in=165, out=195, loop left] node {$a$} ();
\draw[-{triangle 45}] (d1) to [in=75, out=105, loop above] node {$b$} ();
\draw[-{triangle 45}] (d2) to [in=75, out=105, loop above] node {$a$} ();
\draw[-{triangle 45}] (d2) to [in=345, out=15, loop right] node {$b$} ();
\draw[-{triangle 45}] (d3) to [in=345, out=15, loop right] node {$a$} ();
\draw[-{triangle 45}] (d3) to [in=255, out=285, loop below] node {$b$} ();
\draw[-{triangle 45}] (d4) to [in=255, out=285, loop below] node {$a$} ();
\draw[-{triangle 45}] (d4) to [in=165, out=195, loop left] node {$b$} ();
\draw[-{triangle 45}] (d1) to node {$c$} (d2);
\draw[-{triangle 45}] (d2) to node {$d$} (d3);
\draw[-{triangle 45}] (d3) to node {$e$} (d4);
\draw[-{triangle 45}] (d4) to node {$f$} (d1);

\end{tikzpicture}
\caption{A transitive sofic subshift $X$ satisfying $P_X(n)=4\cdot 3^n-3\cdot 2^n$.}
\label{transSofic}
\end{figure}
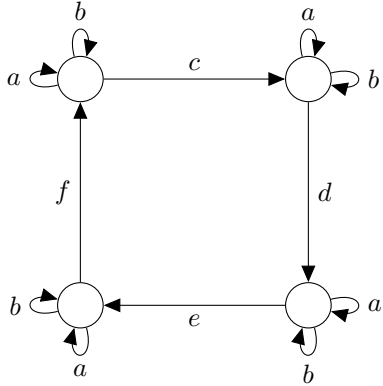

\section*{Acknowledgements}
I thank Mike Boyle for suggesting the problem considered in Section~\ref{secRealize}. The work was partially supported by the Academy of Finland grant 296018.

\bibliographystyle{plain}
\bibliography{mybib}{}

\end{document}